\newcommand{\eqdef}{\mathrel{\mathop:}=}
\newcommand{\cC}{\mathcal{C}}
\newcommand{\cP}{\mathcal{P}}
\newcommand{\cH}{\mathcal{H}}
\newcommand{\cB}{\mathcal{B}}
\newcommand{\cW}{\mathcal{W}}
\newcommand{\cS}{\mathcal{S}}
\newcommand{\cV}{\mathcal{V}}
\newcommand{\cG}{\mathcal{G}}
\def\MC{{\mathcal C}}
\def\MP{{\mathcal P}}
\def\MW{{\mathcal W}}
\def\MC{{\mathcal C}}
\newcommand{\numberset}{\mathbb}
\newcommand{\Z}{\numberset{Z}}
\newcommand{\R}{\numberset{R}}
\def\ZZ{{\mathbb Z}}
\def\pp{{\mathfrak p}}
\def\opn#1#2{\def#1{\operatorname{#2}}} 
\def\opn#1#2{\def#1{\operatorname{#2}}} 
\opn\height{height}
\def\theequation{\thesection.\@arabic \c@equation}
\theoremstyle{plain}
\newtheorem{theorem}[equation]{Theorem}
\newtheorem{notation}[equation]{Notation}
\newtheorem{Lemma}[equation]{Lemma}
\newtheorem{proposition}[equation]{Proposition}
\theoremstyle{definition}
\newtheorem{definition}[equation]{Definition}
\newtheorem{discussion}[equation]{Discussion}
\newenvironment{discussionbox}[1][]{%
\begin{discussion}[#1]\pushQED{\qed}}{\popQED \end{discussion}}
\newtheorem{example}[equation]{Example}
\newtheorem{remark}[equation]{Remark}
\title{Polyocollection ideals and primary decomposition of polyomino ideals}
                \author{Carmelo Cisto}
                \address{Dipartimento di Scienze Matematiche e Informatiche, Scienze Fisiche e Scienze della Terra, Università di Messina, Viale Ferdinando Stagno D’Alcontres 31, Messina, 98166, Italy.}
                \email{carmelo.cisto@unime.it}
                \author{Francesco Navarra}
                \address{Dipartimento di Scienze Matematiche e Informatiche, Scienze Fisiche e Scienze della Terra, Università di Messina, Viale Ferdinando Stagno D’Alcontres 31, Messina, 98166, Italy.}
                \email{francesco.navarra@unime.it}
                \author{Dharm Veer}
                \address{Indian Institute of Technology Gandhinagar, Palaj, Gujarat 382355. India
                \newline 
            Chennai Mathematical Institute, Siruseri, Tamilnadu 603103. India}
                \email{dharm.v@iitgn.ac.in}
\thanks{}
\subjclass[2020]{05B50, 05E40}
\keywords{Polyominoes, primary decomposition, zig-zag walk.}
\begin{document}

\begin{abstract}
In this article, we study the primary decomposition of some binomial ideals. In particular, we introduce the concept of \textit{polyocollection}, a combinatorial object that generalizes the definitions of collection of cells and polyomino, that can be used to compute a primary decomposition of non-prime polyomino ideals. Furthermore, we give a description of the minimal primary decomposition of non-prime closed path polyominoes. In particular, for such a class of polyominoes, we characterize the set of all zig-zag walks and show that the minimal prime ideals have a very nice combinatorial description.   
\end{abstract}

\maketitle
\section{Introduction}
\noindent A polyomino is a finite collection of unit squares with vertices at lattice points in the plane joined edge by edge.
Qureshi~\cite{QUR12} associated a finite type $K$-algebra $K[\MP]$ (over a field $K$) to a polyomino $\MP$. 
An interesting question here is to relate the algebraic properties of the $K$-algebra $K[\MP]$ with the combinatorial properties of the polyomino $\MP$.
For certain classes of polyominoes, some algebraic properties of the associated algebra have been studied. For example, 
the graded minimal free resolution of $K[\MP]$ for convex polyominoes has been studied in~\cite{EHHlinearlyrelated, GMgreen-lazarsfeld}
and the Hilbert series of $K[\MP]$ has been studied in~\cite{thin, KVh-polynomial, hilbert_parallelogram, KVh-Charney-Davies, hilbert-closed-path}.\\
\noindent We say that a polyomino $\MP$ is prime (resp. non-prime) if the defining ideal of $K[\MP]$, denoted by $I_\MP$, is prime (resp. non-prime).
The ideal $I_\MP$ is called the {\em polyomino ideal}.
It is known that simple polyominoes are prime~\cite{coordinate, QSS}.
Hibi-Qureshi~\cite{HQ15non-simple} and Shikama~\cite{Shikama18nonsimple} proved that the non-simple polyominoes obtained by removing a convex polyomino from a rectangle are prime.
Characterizing prime polyominoes is a difficult task;
to do that Mascia et.~al.~\cite{MRR20primalityOfPolyominoes} defined zig-zag walks (see Definition~\ref{def:zig-zagwalk}) for polyominoes.
They proved~\cite[Corollary\ 3.6]{MRR20primalityOfPolyominoes} that the existence of zig-zag walks determines the non-primality of the polyomino
and conjectured that~\cite[Conjecture\ 4.6]{MRR20primalityOfPolyominoes} this property characterizes non-prime polyominoes.
This conjecture has been verified for certain classes of polyominoes, for example, for grid polyominoes~\cite{MRR20primalityOfPolyominoes}, for closed path polyominoes~\cite{Cisto_Navarra_closed_path} and for weakly closed path polyominoes~\cite{weakly}.
In~\cite{MRR22primalityOfPolyominoes}, Mascia et.~al. have determined some prime polyominoes using Gr\"{o}bner basis.\\
\noindent Concerning other algebraic properties, several mathematicians have studied the combinatorial description of the height of polyomino ideals. For some classes of polyominoes, it has been proved that the height of the polyomino ideal $I_{\MP}$ is the number of cells of $\MP$.
For instance, Qureshi~\cite{QUR12} proved this for convex polyominoes. 
Herzog-Madani~\cite{coordinate} extended this to simple polyominoes.
Extending this further, Herzog et.~al.~\cite{height} recently proved this for the case when the polyomino ideal is unmixed.
Dinu and the second author of this article~\cite{konig_type} proved this for closed path polyominoes and conjectured that this holds for all polyomino ideals. In this regard, we show that if removing a cell makes the polyomino simple, then the above conjecture holds (Proposition~\ref{prop:I_Pheight}).

\noindent In this article, we provide some properties about primary decomposition of polyomino ideals and we study the minimal primary decomposition of non-prime closed path polyominoes. The primary decomposition have been studied for some classes of binomial ideals, for instance in \cite{binomial_edge} for binomial edge ideals and in \cite{adjacent_minors} for ideals generated by adjacent 2-minors. The first results explained in this paper allows to consider something similar for polyomino ideals. In particular, supported by some examples obtained by using the software \texttt{Macaulay2} (\cite{macaulay2}) and inspired by the concept of \emph{admissible set} in \cite{adjacent_minors}, we find that for studying the primary decomposition of polyomino ideals one can consider a larger class of binomial ideals. This class is related to a combinatorial object that generalizes the concept of collection of cells and polyominoes. We call such a class of combinatorial objects \emph{polyocollections}. For non-prime closed path polyominoes we provide, in our main result, a detailed description of the minimal primary decomposition. 
We show that the polyomino ideal is the intersection of two minimal prime ideals (Theorem~\ref{thm:mainprimary}), and both minimal prime ideals have a very nice combinatorial description.
One of the minimal primes is the toric ideal appeared in~\cite[Section\ 3]{MRR20primalityOfPolyominoes} (see Theorem~\ref{thm:p_1prime}). 
We characterize zig-zag walks of non-prime closed path polyominoes and use that to define the other minimal prime which is generated by monomials and binomials. Finally, we also show that the height of these two ideals is equal to the number of the cells of the polyomino, and as a consequence the polyomino ideal is unmixed.\\ 
\noindent The article is organised as follows. In Section~\ref{sec:preliminaries}, we recall some basic notions on combinatorics related to polyominoes. In Section~\ref{sec:polycollection}, we define polyocollections and we associate a binomial ideal to it, generalizing the ideal associated to a collection of cells in \cite{QUR12}. Moreover, we provide a characterization of the primality of that binomial ideal in terms of a lattice ideal attached to the  polyocollection and we give a primary decomposition of the radical of the ideal associated to the polyocollection.  Section~\ref{sec:closedpath} is devoted to study the minimal primary decomposition of closed path polyominoes having zig-zag walks, equivalently the polyomino is non-prime (see  \cite{Cisto_Navarra_closed_path}). In the last section, we highlight some possible future directions and open questions about polyominoes and polyocollection ideals.

\section{Intervals, cells and polyominoes}\label{sec:preliminaries}

\noindent Let $(i,j),(k,l)\in \Z^2$. We say that $(i,j)\leq(k,l)$ if $i\leq k$ and $j\leq l$. Consider $a=(i,j)$ and $b=(k,l)$ in $\Z^2$ with $a\leq b$. The set $[a,b]=\{(m,n)\in \Z^2: i\leq m\leq k,\ j\leq n\leq l \}$ is called an \textit{interval} of $\Z^2$. 
In addition, if $i< k$ and $j<l$ then $[a,b]$ is a \textit{proper} interval. If $j=l$ (or $i=k$) then $a$ and $b$ are in \textit{horizontal} (or \textit{vertical}) \textit{position}. We also denote $]a,b[=\{(m,n)\in \Z^2: i< m< k,\ j< n< l\}$ and  $]a,b]$ or $[a,b[$ with the usual meaning. We also need to consider intervals in $\mathbb{R}^2$, for which we use the notations $\operatorname{c}([a,b])=\{(m,n)\in \mathbb{R}^2: i\leq m\leq k,\ j\leq n\leq l \}$, that is the \emph{closure} of $[a,b]$ in $\mathbb{R}^2$, and $\mathrm{int}([a,b])=\{(r,s)\in \R^2: i< r< k,\ j< s<l \}$, that is the \emph{interior} of $[a,b]$ in $\mathbb{R}^2$.\\
 Suppose that $[a,b]$ is a proper interval. In such a case we say $a, b$ the \textit{diagonal corners} of $[a,b]$ and $c=(i,l)$, $d=(k,j)$ the \textit{anti-diagonal corners} of $[a,b]$. Moreover, the elements $a$, $b$, $c$ and $d$ are called respectively the \textit{lower left}, \textit{upper right}, \textit{upper left} and \textit{lower right} \textit{corners} of $[a,b]$. We also call $V([a,b])=\{a,b,c,d\}$ the set of \emph{vertices} of the interval, while the sets $[a,c],[a,d],[b,d],[b,c]$ are the \textit{edges} of the interval and we denote $E([a,b])=\{[a,c],[a,d],[b,d],[b,c]\}$. If $[a,c]$ is an edge of an interval, we call the set $\{a,c\}$ the \emph{boundary of the edge}. \\
Let $\cS$ be a non-empty collection of intervals in $\Z^2$. The set of the vertices and of the edges of $\cS$ are respectively $V(\cS)=\bigcup_{I\in \cS}V(I)$ and $E(\cS)=\bigcup_{I\in \cS}E(I)$. We denote by $|\cS|$ the number of intervals belonging to the collection $\cS$, called also the \emph{rank} of $\cS$.\\ 
A proper interval $C=[a,b]$ with $b=a+(1,1)$ is called a \textit{cell} of $\ZZ^2$. Consider now $\cS$ be a collection of cells. 
If $C$ and $D$ are two distinct cells of $\cS$, then a \textit{walk} from $C$ to $D$ in $\cS$ is a sequence $\cC:C=C_1,\dots,C_m=D$ of cells of $\ZZ^2$ such that $C_i \cap C_{i+1}$ is an edge of $C_i$ and $C_{i+1}$ for $i=1,\dots,m-1$. In addition, if $C_i \neq C_j$ for all $i\neq j$, then $\cC$ is called a \textit{path} from $C$ to $D$. We say that $C$ and $D$ are connected in $\cS$ if there exists a path of cells in $\cS$ from $C$ to $D$.
 If $\cP$ is a non-empty and finite collection of cells in $\Z^2$, then it is called a \emph{polyomino} if any two cells of $\cP$ are connected in $\cP$. For instance, see Figure \ref{Figura: Polimino introduzione}.
\begin{figure}[h]
	\centering
	\includegraphics[scale=0.6]{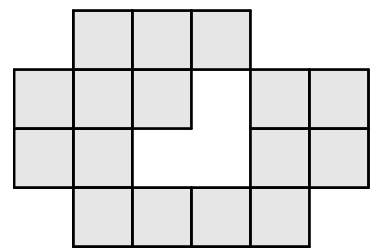}
	\caption{A polyomino.}
	\label{Figura: Polimino introduzione}
\end{figure}
 We say that a collection of cells $\cP$ is \textit{simple} if for any two cells $C$ and $D$ not in $\cP$ there exists a path of cells not in $\cP$ from $C$ to $D$. A finite collection of cells $\cH$ not in $\cP$ is a \textit{hole} of $\cP$ if any two cells of $\cH$ are connected in $\cH$ and $\cH$ is maximal with respect to set inclusion. For example, the polyomino in Figure \ref{Figura: Polimino introduzione} is not simple with an hole. Obviously, each hole of $\cP$ is a simple polyomino and $\cP$ is simple if and only if it has not any hole. 
Consider two cells $A$ and $B$ of $\Z^2$ with $a=(i,j)$ and $b=(k,l)$ as the lower left corners of $A$ and $B$ and $a\leq b$. A \textit{cell interval} $[A,B]$ is the set of the cells of $\Z^2$ with lower left corner $(r,s)$ such that $i\leqslant r\leqslant k$ and $j\leqslant s\leqslant l$. If $(i,j)$ and $(k,l)$ are in horizontal (or vertical) position, we say that the cells $A$ and $B$ are in \textit{horizontal} (or \textit{vertical}) \textit{position}. 
Let $\cP$ be a collection of cells. Consider  two cells $A$ and $B$ of $\cP$ in vertical or horizontal position. 	 
The cell interval $[A,B]$, containing $n>1$ cells, is called a
\textit{block of $\cP$ of rank n} if all cells of $[A,B]$ belong to $\cP$. The cells $A$ and $B$ are called \textit{extremal} cells of $[A,B]$. Moreover, a block $\cB$ of $\cP$ is \textit{maximal} if there does not exist any block of $\cP$ which contains properly $\cB$. 

\section{Polyocollections and related ideals}\label{sec:polycollection}

\noindent Let $\cC$ be a collection of intervals in $\mathbb{Z}^2$. We say that $\cC$ is a \emph{polyocollection} if for all $I,J\in \cC$ with $I\ne J$, we have $I\nsubseteq J$ and one of the following holds:
\begin{enumerate}
	\item $I\cap J$ is a common edge of $I$ and $J$. 
	\item For all $F\in E(I)$ and for all $G\in E(J)$, $|F\cap G|\leq 1$.  
\end{enumerate}

\begin{example} \rm
	$\cC_1=\{[(1,1),(2,2)],[(1,2),(2,3)],[(2,1),(5,2)],[(2,2),(5,3)],[(5,3),[7,4])]\}$ is a polyocollection, it is pictured in Figure~\ref{img1-1}.\\
	The collection $\cC_2=\{[(3,1),(4,2)],[(4,1),(5,2)],[(3,2),(4,6)],[(4,2),(5,6)],[(3,6),(4,7)],\newline [(4,6),(5,7)],[(1,3),(2,4)],[(1,4),(2,5)],[(2,3),(6,4)],[(2,4),(6,5)],[(6,3),(7,4)],[(6,4),(7,5)]\}$, displayed in Figure~\ref{img1-2}, is also a polyocollection.\\
	The collection $\cC_3=\{[(1,2),(3,4)],[(2,1),(4,3)],[(4,1),(5,2)],[(4,2),(5,3)]\}$, displayed in Figure~\ref{img1-3}, is not a polyocollection. Because if we take $I=[(2,1),(4,3)]$ and $J=[(4,1),(5,2)]$, then $I$ and $J$ does not satisfy any of the two conditions for a polyocollection (the same hold also for $I=[(2,1),(4,3)]$ and $J=[(4,2),(5,3)]$). Observe that, for $\cC_3$, the position of $[(1,2),(3,4)]$ with respect to $[(2,1),(4,3)]$ is not in contradiction with the definition of polyocollection.\\	
	The collection $\cC_4=\{[(1,1),(3,3)],[(1,3),(3,5)],[(3,1),(5,3)],[(3,3),(5,5)],[(2,2),(4,4)]\}$, displayed in Figure~\ref{img1-4}, is another example of polyocollection.
	
	\begin{figure}[h!]
		\subfloat[]{\includegraphics[scale=0.6]{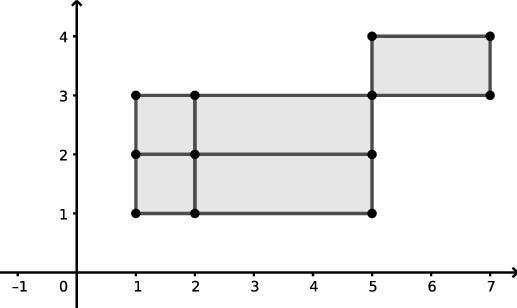}\label{img1-1}}\qquad \qquad
		\subfloat[]{\includegraphics[scale=0.6]{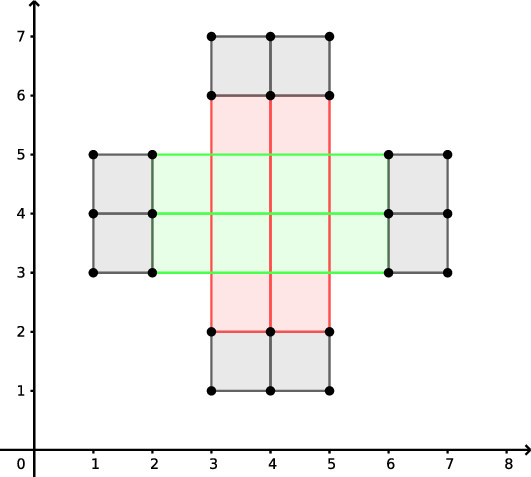}\label{img1-2}}\qquad \qquad
		\subfloat[]{\includegraphics[scale=0.6]{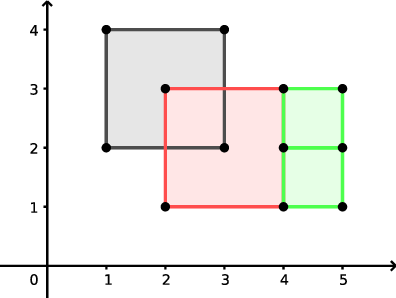}\label{img1-3}}\qquad \qquad
		\subfloat[]{\includegraphics[scale=0.6]{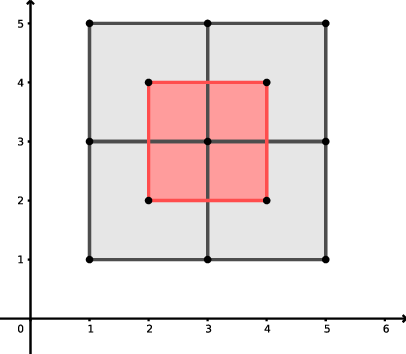}\label{img1-4}}
		\caption{(a), (b) and (d) are polyocollections, while (c) is not a polyocollection.}
		\label{img1}
	\end{figure}
\label{example1}	
\end{example}

     An interval $I$ of $\mathbb{Z}^2$ is called an \emph{inner interval of $\cC$} if there exist $n\in \mathbb{N}$ and $I_1,I_2,\ldots,I_n\in \cC$ such that $\operatorname{c}(I)=\bigcup_{i=1}^n \operatorname{c}(I_i)$, with  $\mathrm{int}(I_{k})\cap \mathrm{int}(I_{l})=\emptyset$ for all $k\neq l$.

We denote by $G(\cC)$ the set of inner intervals of $\cC$. By convention we consider that the empty set is a polyocollection whose set of inner interval is the empty set.\\
For instance, referring to Example~\ref{example1}, the intervals $[(1,1),(2,3)]$ and $[(1,1),(5,3)]$ of $\cC_1$ are examples of inner intervals. In $\cC_2$, $[(1,3),(7,5)]$ and $[(3,2),(5,7)]$ are examples of inner intervals, while $[(3,3),(5,5)]$ is not an inner interval.

	\noindent Let $\cC$ be a polyocollection. Define the polynomial ring $S_\cC=K[x_a\mid a \in V(\cC)]$, with $K$ a field. If $I=[a,b]$ is an inner interval of $\cC$, having anti-diagonal corners $c,d$, then we call the binomial $f_I=x_a x_b - x_c x_d \in S_\cC$ an \emph{inner 2-minor} of $\cC$. We define by $I_\cC$ the ideal generated by the binomials $f_I$ for all inner intervals $I$ of $\cC$ and we call it the \textit{polyocollection ideal} of $\cC$. The quotient ring $K[\cC]=S_\cC/I_\cC$ is said the \emph{coordinate ring} of $\cC$.
    For a monomial $u\in S_\cC$, we define the {\em support} of $u$, denoted by $\mathrm{supp}(u)$ to be the set of variables $x_r\in S_\cC$ such that $x_r$ divides $u$ in $S_\cC$.

\begin{remark}
Let $\cP$ be a collection of cells. Observe that $\cP$ is trivially a polyocollection. In such a case the definition of inner intervals reduces to the following: an interval $I\in \mathbb{Z}^2$ is an inner interval of $\cP$ if and only if each cell contained in $I$ belongs $\cP$. Moreover the ideal $I_\cP$ as defined above for polyocollections coincides with the ideal associated to a collection of cells following the definition in \cite{QUR12}.
\end{remark}

\noindent If $\cC$ is a polyocollection and $\cP$ is a collection of cells, we say that $\cC$ and $\cP$ are \textit{algebraically isomorphic} if $K[\cC]$ is isomorphic to $K[\cP]$ as $K$-algebras. With reference to Example \ref{example1}, note that $\cC_1$, $\cC_2$ and $\cC_4$ are algebraically isomorphic to the collections of cells shown in Figures \ref{fig:polyo_coll_1}, \ref{fig:polyo_coll_2} and \ref{fig:polyo_coll_3} respectively.  

 \begin{figure}[h!]
	\subfloat[]{\includegraphics[scale=0.6]{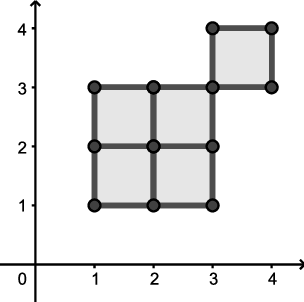}\label{fig:polyo_coll_1}}\qquad \qquad
	\subfloat[]{\includegraphics[scale=0.6]{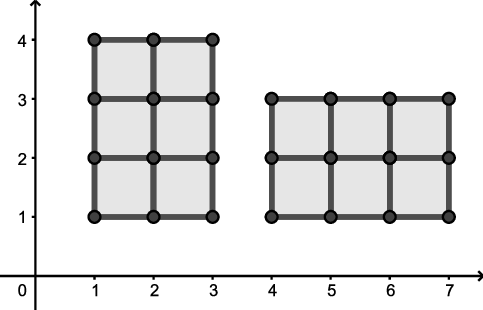}\label{fig:polyo_coll_2}}\qquad \qquad
	\subfloat[]{\includegraphics[scale=0.6]{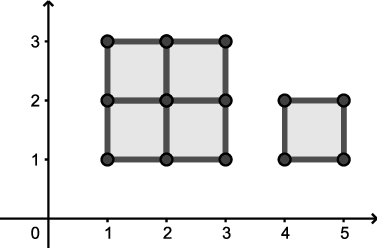}\label{fig:polyo_coll_3}}
	\caption{The collections of cells which are algebraically isomorphic to the polyocollections of Figures \ref{img1-1}, \ref{img1-2} and \ref{img1-4}, respectively.}
	\label{Figure: polyo iso coll}
\end{figure} 

In the following discussion, we prove that there exists a polyocollection $\cC$ which is not algebraically isomorphic to any collection of cells. Roughly speaking, that means the class of polyocollections is algebraically strictly bigger than the class of collections of cells. 

\begin{discussionbox}\label{dis:non-iso}
    Let $\MC$ be the polyocollection as shown in Figure~\ref{fig:polyo}. 
    Then, \[K[\MC] = K[x_{21},x_{41},x_{12},x_{22},x_{42},x_{52},
    x_{13},x_{23},x_{33},x_{43},x_{53},x_{24},x_{34},x_{44}]\] 
    and the polyocollection ideal 
    \begin{align*}
    I_\MC = & (x_{42}x_{53}-x_{43}x_{52},\,x_{33}x_{44}-x_{34}x_{43},\, x_{23}x_{44}-x_{24}x_{43},\,x_{23}x_{34}-x_{24}x_{33},\\
    & x_{21}x_{42}-x_{22}x_{41},\,x_{12}x_{23}-x_{13}x_{22}). 
  \end{align*}
    \texttt{Macaulay2} computations show that $I_\MC$ is a non-prime ideal of height 5 and the minimal primary decomposition of the ideal $I_\MC$ consists of two prime ideals $\mathfrak{p}_1$ and $\mathfrak{p}_2$, that are the following:
    \begin{align*}
    \mathfrak{p}_1  = & 
   I_\MC+ (x_{12}x_{24}x_{41}x_{53}-x_{13}x_{21}x_{44}x_{52})
\\
 \mathfrak{p}_2= &(x_{43},x_{42},x_{33},x_{23},x_{22})
 \end{align*}
    The height of both ideals in the primary decomposition is 5. So, $I_\MC$ is a unmixed ideal.

    Suppose that, if possible, there exists a collection of cells $\MP$ such that $K[\MC]$ is isomorphic to $K[\MP]$.
    Then, by~\cite[Theorem 3.1]{height}, $\MP$ is a collection of 5 cells. 
    Since $I_\MP$ is a non-prime ideal, $\MP$ is a non-simple collection of cells by~\cite[Theorem 3.3]{weakly}.
    Therefore, up to rotations and reflections, the only possible collections of cells $\MP$ are those in Figures~\ref{fig:poyomino1}-\ref{fig:poyomino4}.
    For $\MP$ as in Figure~\ref{fig:poyomino2} and~\ref{fig:poyomino3}, \texttt{Macaulay2} computations show that the dimension of the ring $K[\MP]$ is $10$ and $11$ respectively. 
    As the dimension of the ring $K[\MC]$ is 9, we get that $K[\MC]$ is not isomorphic to $K[\MP]$ when $\MP$ is as in Figure~\ref{fig:poyomino2} and~\ref{fig:poyomino3}. 
    For $\MP$ as in Figure~\ref{fig:poyomino4}, $K[\cP]$ is a domain (see~\cite[Remark\ 3.4]{weakly}), so it is not isomorphic to $K[\cC]$.
When $\MP$ is as in Figure~\ref{fig:poyomino1}, the minimal primary decomposition of the ideal $I_\MP$ consists of two prime ideals $\mathfrak{q}_1$ and $\mathfrak{q}_2$, that are the following: 
  \begin{align*}
    \mathfrak{q}_1  = & 
I_\MP + (x_{12}x_{25}x_{31}x_{43}-x_{13}x_{21}x_{35}x_{42},\,x_{12}x_{24}x_{31}x_{43}-x_{13}x_{21}x_{34}x_{42}) \\ 
    \mathfrak{q}_2 = & (x_{33},\,x_{32},\,x_{23},\,x_{22},\,x_{24}x_{35}-x_{25}x_{34})
    \end{align*}   
 One can immediately see that there is a monomial ideal generated by variables in the primary decomposition of $I_\MC$ but none of the ideals in the primary decomposition of $I_\MP$ is generated by monomials. 
 Hence $K[\MC]$ is not isomorphic to $K[\MP]$.
 \end{discussionbox}
 
 \begin{figure}[h!]
		\subfloat[]{\includegraphics[scale=0.6]{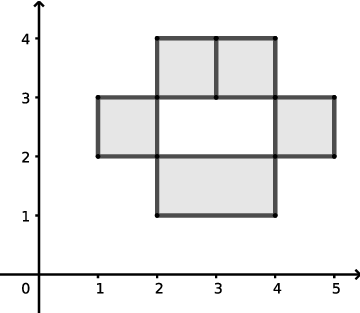}\label{fig:polyo}}\qquad \qquad
		\subfloat[]{\includegraphics[scale=0.6]{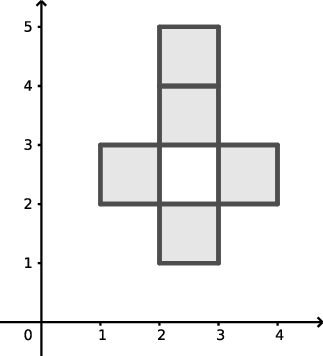}\label{fig:poyomino1}}\qquad \qquad
		\subfloat[]{\includegraphics[scale=0.6]{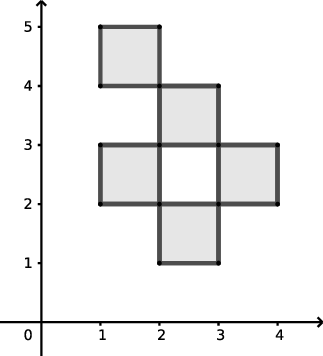}\label{fig:poyomino2}}\qquad \qquad
		\subfloat[]{\includegraphics[scale=0.6]{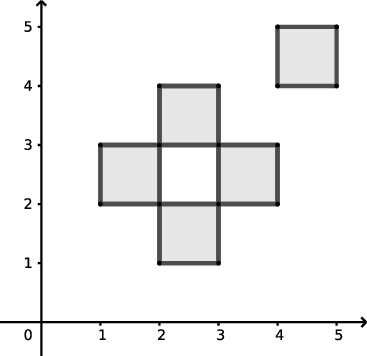}\label{fig:poyomino3}}\qquad \qquad
		\subfloat[]{\includegraphics[scale=0.6]{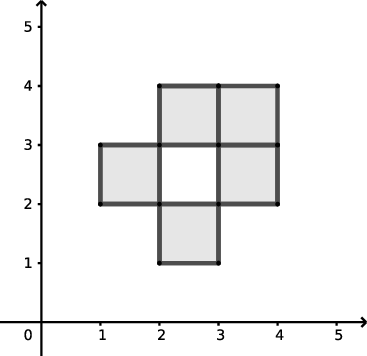}\label{fig:poyomino4}}
		\caption{The polyocollection and collections of cells examined in Discussion~\ref{dis:non-iso}.}
		\label{6inner}
	\end{figure} 

\noindent We want to show that some concepts and results introduced for collection of cells in \cite{QUR12}, can be extended in a natural way for polyocollections.

\subsection{The lattice ideal related to a polyocollections}
Let $\cC$ be a polyocollection. For $a \in V(\cC)$ denote with $\mathbf{v}_a$ the vector in $\mathbb{Z}^{|V(\cC)|}$ having $1$ in the coordinate indexed by $a$ and $0$ in all other coordinates. If $[a,b]\in \cC$ is an interval having $a,b$ as diagonal corners, and $c,d$ as anti-diagonal corners, we denote $\mathbf{v}_{[a,b]}=\mathbf{v}_a+ \mathbf{v}_b -\mathbf{v}_c -\mathbf{v}_d\in \mathbb{Z}^{|V(\cC)|}$.
We define $\Lambda_\cC$ the sub-lattice of $\mathbb{Z}^{|V(\cC)|}$ generated by the vectors $\mathbf{v}_I$ for all $I\in \cC$.  \\
Let $n=|V(\cC)|$, we recall some known notations. If $\mathbf{v}\in \mathbb{N}^n$ we denote as usual with $x^\mathbf{v}$ the monomial in $S_\cC$ having $\mathbf{v}$ as exponent vector. If $\mathbf{e}\in \mathbb{Z}^n$ we denote with $\mathbf{e}^+$ the vector obtained from $\mathbf{e}$ by replacing all negative components by zero, and $\mathbf{e}^-=-(\mathbf{e}-\mathbf{e}^+)$.\\
Let $L_\cC$ be the lattice ideal of $\Lambda_\cC$, that is the following binomial ideal in $S_\cC$:

$$L_\cC=(\{x^{\mathbf{e}^+}- x^{\mathbf{e}^-}\mid \mathbf{e}\in \Lambda_\cC\})$$


\noindent In \cite[Theorem 3.5]{QUR12} it is shown that if $\cP$ is a collection of cells then $L_\cP$ is a prime ideal. Using the same argument, we can prove that the same holds considering also polyocollections. We provide the proof of this fact for completeness.\\

\noindent Denote by $F(\cC)$ the set of elements in $V(\cC)$ that are not lower left corner of an interval in $\cC$.\\

\begin{Lemma}
	Let $\cC$ be a polyocollection and $\cB=\{\mathbf{v}_I \mid I\in \cC\} \cup \{\mathbf{v}_a\mid a\in F(\cC)\}$. Then $\cB$ is a basis of the group $\mathbb{Z}^{|V(\cC)|}$.
	\label{basis}
\end{Lemma}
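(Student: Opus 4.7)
The plan is to exhibit $\cB$ as the rows of a $|V(\cC)| \times |V(\cC)|$ matrix $A$ over $\mathbb{Z}$ with determinant $\pm 1$, so that $\cB$ is a $\mathbb{Z}$-basis of $\mathbb{Z}^{|V(\cC)|}$. Writing $L = V(\cC) \setminus F(\cC)$ for the set of lower left corners, I would first verify that $I \mapsto a_I$ is injective on $\cC$, so that $|\cB| = |\cC| + |F(\cC)| = |L| + |F(\cC)| = |V(\cC)|$. Indeed, if $I \neq J$ in $\cC$ shared a lower left corner $a$, then since neither is contained in the other, their upper right corners $b_I, b_J$ are incomparable; this forces $I \cap J$ to be a proper rectangle. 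But then the bottom edges of $I$ and $J$ both start at $a$ and overlap in more than one lattice point, violating polyocollection axiom~(2), while $I \cap J$ is manifestly not a common edge, violating axiom~(1).

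Next I would order the intervals $I_1 \prec I_2 \prec \cdots \prec I_m$ by any linear extension of the componentwise partial order on their lower left corners (lex order works), with matching order on the first block of columns. Listing the rows of $A$ as $\mathbf{v}_{I_1}, \ldots, \mathbf{v}_{I_m}$ followed by $\mathbf{v}_p$ for $p \in F(\cC)$, and the columns as $a_{I_1}, \ldots, a_{I_m}$ followed by the elements of $F(\cC)$, the matrix acquires the block form
$$A \;=\; \begin{pmatrix} M & \ast \\ 0 & \mathrm{Id}_{|F(\cC)|} \end{pmatrix},$$
because each $\mathbf{v}_p$ with $p \in F(\cC)$ has support $\{p\}$, disjoint from $L$. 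The $m \times m$ block $M$ has entry $M[i,j] = \mathbf{v}_{I_i}[a_{I_j}]$; this equals $1$ on the diagonal, and for $i \neq j$ is nonzero only when $a_{I_j}$ is the upper right, upper left, or lower right corner of $I_i$. Each of those three corners strictly dominates $a_{I_i}$ in both coordinates, so $a_{I_i} < a_{I_j}$ componentwise, forcing $i < j$ in our chosen linear extension. Hence $M$ is upper triangular with unit diagonal, $\det M = 1$, and $\det A = 1$.

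The only step that uses anything beyond elementary linear algebra is the injectivity of $I \mapsto a_I$, which is where the polyocollection axioms genuinely enter; after that, the triangulation of $M$ is essentially forced by the fact that the lower left corner of any proper rectangle is the unique vertex minimized in the componentwise order on its four corners.
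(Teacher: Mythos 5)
Your proof is correct. It computes the determinant of the same transition matrix as the paper, but where the paper argues by induction on $|\cC|$ --- peeling off an interval $J=[a,b]$ whose lower-left corner is minimal and reducing to the matrix of $\cC\setminus\{J\}$ through cofactor expansions, with a three-way case analysis on whether the anti-diagonal corners of $J$ lie in $F(\cC)$ --- you order the intervals by a linear extension of the componentwise order on lower-left corners and make the matrix visibly unitriangular in one step, so the determinant is $1$ outright and no case analysis is needed. Your argument also makes explicit two points the paper leaves implicit: that $I\mapsto a_I$ is injective (the paper only asserts, for a minimal vertex $a$, that a unique $J\in\cC$ contains it), and that one really needs $\det=\pm 1$ rather than merely $\det\neq 0$ to conclude that $\cB$ is a basis of the \emph{group} $\mathbb{Z}^{|V(\cC)|}$; the paper's induction does yield $\pm 1$, but it only announces $\det(M)\neq 0$. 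One small slip in your write-up: the anti-diagonal corners of $I_i$ dominate $a_{I_i}$ strictly in only one coordinate (they agree with it in the other), not ``in both coordinates''; since they are nevertheless strictly above $a_{I_i}$ in the componentwise partial order, your linear-extension argument is unaffected.
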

\begin{proof}
    Let $n=|V(\cC)|$ and $M$ be the $n\times n$ matrix whose column vectors are $\cB=\{\mathbf{v}_I \mid I\in \cC\} \cup \{\mathbf{v}_a\mid a\in F(\cC)\}$. We refer to $M$ as the matrix associated to $\cC$. Observe that the columns of $M$ can be labelled with the set $\{I\in \cC\}\cup \{a\in F(\cC)\}$ and the rows can be labelled with the set $V(\cC)$. In order to prove our result we show that $\det(M)\neq 0$, by induction on $|\cC|$. If $|\cC|=1$ it is easy to check that $\det(M)=\pm 1$. Suppose $|\cC|>1$ and let $a\in V(\cC)$ be such that it is a minimal element under the natural partial order on $\ZZ^2$. Observe that in this case there exists an interval $J=[a,b]$ in $\cC$ such that $a\notin I$ for all $I\in \cC \setminus \{J\}$. Denote by $c,d$ the anti-diagonal corner of $J$. We distinguish three cases. 
    \begin{enumerate}
        \item[Case 1]   
            Either $c\in F(\cC)$ or $d\in F(\cC)$. Without loss of generality, we consider $c\in F(\cC)$  and $d\notin F(\cC)$. Let $M'$ be the matrix obtained by removing the row labelled with $a$  and the column labelled by $J$. Observe that the column removed contains all $0$ except for $1$ in the position of $a$, so $\det(M)=\pm \det(M')$. Moreover, let $M''$ be the matrix obtained from $M'$ by removing the row labelled with $c$  and the column labelled by $c$. It is easy to verify that $\det(M')=\pm \det(M'')$, so $\det(M)=\pm \det(M'')$. Furthermore, it is not difficult to verify that $M''$ is the matrix associated to the polyocollection $\cC'=\cC\setminus \{J\}$, so by induction $\det(M'')\neq 0$, that is $\det(M)\neq 0$.
    \item[Case 2]
	Both $c\in F(\cC)$ and $d\in F(\cC)$. In this case we can use the same argument, considering $\det(M)=\pm \det(M''')$, where $M'''$ is the matrix associated to $\cC'=\cC\setminus \{J\}$ and it is obtained from $M$ by removing the rows labelled with $a,c,d$  and the columns labelled by $J,c,d$.
\item[Case 3]
	$c$ and $d$ are both the lower left corner of an interval in $\cC$. Also in this case we can use the same argument, considering $\det(M)=\pm \det(M')$, where $M'$ is the matrix associated to $\cC'=\cC\setminus \{J\}$ and it is obtained from $M$ by removing the row labelled with $a$ and the column labelled by $J$.
    \end{enumerate}
\end{proof}

\begin{theorem}\label{thm:lc_prime}
Let $\cC$ be a polyocollection. Then $L_\cC$ is a prime ideal.
\end{theorem}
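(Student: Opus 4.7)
The plan is to invoke the standard criterion that a lattice ideal is prime if and only if its defining lattice is saturated, equivalently, if and only if the quotient of the ambient free abelian group by the lattice is torsion-free (see, e.g., Eisenbud--Sturmfels, \emph{Binomial ideals}). So it suffices to show that $\ZZ^{|V(\cC)|}/\Lambda_\cC$ is torsion-free.

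This reduces immediately to the previous lemma. Indeed, by Lemma~3.8 the set
$$\cB=\{\mathbf{v}_I \mid I\in \cC\} \cup \{\mathbf{v}_a\mid a\in F(\cC)\}$$
is a $\ZZ$-basis of $\ZZ^{|V(\cC)|}$. In particular, the generating set $\{\mathbf{v}_I\mid I\in \cC\}$ of $\Lambda_\cC$ can be extended to a $\ZZ$-basis of the ambient group. This is exactly the condition that $\Lambda_\cC$ is a direct summand (equivalently, a pure subgroup) of $\ZZ^{|V(\cC)|}$, so the quotient is isomorphic to $\ZZ^{|F(\cC)|}$, which is manifestly torsion-free.

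Combining the two steps, $L_\cC$ is a prime ideal. The real combinatorial content of the result lies in Lemma~3.8, which uses the defining axioms of a polyocollection (via the inductive reduction $\cC\mapsto \cC\setminus\{J\}$ for a minimal lower-left corner) to show that the matrix of relations has nonzero determinant. Once that lemma is in hand, there is no further obstacle: primality of $L_\cC$ is a formal consequence.

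An equivalent approach, if one prefers not to cite the lattice-ideal criterion directly, is to exhibit $L_\cC$ as a toric ideal. Using the inverse of the basis matrix from Lemma~3.8, one assigns to each $x_a$ (with $a\in V(\cC)$) a Laurent monomial in variables indexed by $\cC \cup F(\cC)$ so that each binomial $x^{\mathbf{e}^+}-x^{\mathbf{e}^-}$ with $\mathbf{e}\in\Lambda_\cC$ lies in the kernel. Surjectivity onto the relations and the description of $\Lambda_\cC$ via $\cB$ then identify $L_\cC$ with this kernel, which is prime as the kernel of a $K$-algebra map into a Laurent polynomial domain.
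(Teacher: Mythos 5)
Your proposal is correct and takes essentially the same approach as the paper: both arguments rest entirely on Lemma~\ref{basis}, and the ``equivalent approach'' you sketch at the end is precisely the paper's proof, which constructs the monomial map $\psi$ into a Laurent polynomial ring and verifies $L_\cC=\ker\psi$ by a double inclusion. Your primary route simply packages that step by citing the standard saturation criterion for lattice ideals, which is legitimate here since Lemma~\ref{basis} shows that $\{\mathbf{v}_I\mid I\in\cC\}$ extends to a $\ZZ$-basis of $\ZZ^{|V(\cC)|}$, so $\Lambda_\cC$ is a direct summand and the quotient is free.
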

\begin{proof}
    Let $n=|V(\cC)|$. By Lemma~\ref{basis}, every $\mathbf{v}\in \mathbb{N}^n$ can be expressed in an unique way as $\mathbb{Z}$-linear combination of the vectors in $\cB=\{\mathbf{v}_I \mid I\in \cC\} \cup \{\mathbf{v}_a\mid a\in F(\cC)\}$. 
    In particular, for all $a\in V(\cC)$, we have $\mathbf{v}_a=\sum_{I\in \cC}\lambda_I^{(a)}\mathbf{v}_I+\sum_{b\in F(\cC)}\mu_b^{(a)}\mathbf{v}_b$, with $\lambda_I^{(a)},\mu_b^{(a)}\in \mathbb{Z}$. We define the following map:
    $$\psi : S_\cC \longrightarrow K[\{y_b^{\pm 1}\mid b\in F(\cC)\}]\ \ \mbox{with}\ \ \ x_a \mapsto \prod_{b\in F(\cC)}y_b^{\mu_b^{(a)}}.$$
	It is not difficult to check that if $\mathbf{v}\in \mathbb{N}^n$ and $\mathbf{v}=\sum_{I\in \cC}\lambda_I^{(v)}\mathbf{v}_I+\sum_{b\in F(\cC)}\mu_b^{(v)}\mathbf{v}_b$, with $\lambda_I^{(v)},\mu_b^{(v)}\in \mathbb{Z}$, then $\psi(x^\mathbf{v})=\prod_{b\in F(\cC)}y_b^{\mu_b^{(v)}}$. Observe that $\ker \psi$ is a toric ideal, in particular it is prime. We show that $L_\cC=\ker \psi$.\\
	$\subseteq)$ Let $x^\mathbf{v}-x^\mathbf{w}\in L_\cC$, then $\mathbf{v}-\mathbf{w}\in \Lambda_\cC$. So $\mathbf{v}=\mathbf{w}+\sum_{I\in \cC}\alpha_I\mathbf{v}_I$, with $\alpha_I\in \mathbb{Z}$, and by construction the coefficients $\alpha_I$ of $\mathbf{v}_I$, for $I\in \cC$, does not contribute to the computation of $\psi(x^\mathbf{v})$. Therefore $\psi(x^\mathbf{v})=\psi(x^\mathbf{w})$.\\
	$\supseteq)$ Let $x^\mathbf{v}-x^\mathbf{w}$ be a minimal generator of $\ker \psi$. In particular, $\mathrm{supp}(x^\mathbf{v})\cap \mathrm{supp}(x^\mathbf{w})=\emptyset$, that is $\mathbf{e}=\mathbf{v}-\mathbf{w}\in \mathbb{Z}^n$ with $\mathbf{e}^+=\mathbf{v}$ and $\mathbf{e}^-=\mathbf{w}$. Moreover $\psi(x^\mathbf{v})=\psi(x^\mathbf{w})=\prod_{b\in F(\cC)}y_b^{\mu_b}$ with $\mu_b\in \mathbb{Z}$. This means that $\mathbf{v}=\sum_{I\in \cC}\lambda_I^{(v)}\mathbf{v}_I+\sum_{b\in F(\cC)}\mu_b^{(v)}\mathbf{v}_b$ and $\mathbf{w}=\sum_{I\in \cC}\lambda_I^{(w)}\mathbf{v}_I+\sum_{b\in F(\cC)}\mu_b^{(w)}\mathbf{v}_b$ with $\mu_b^{(v)}=\mu_b^{(w)}=\mu_b$ for each $b\in F(\cC)$. So $\mathbf{v}-\mathbf{w}=\sum_{I\in \cC}(\lambda_I^{(v)}-\lambda_I^{(w)})\mathbf{v}_I \in \Lambda_\cC$, in particular $x^\mathbf{v}-x^\mathbf{w}\in L_\cC$.
\end{proof}

\noindent By the previous results, observe also that $L_\cC$ does not contain monomials. In fact, since it is a prime ideal, if it contains monomials then it contains a variable $x_a$ for $a\in V(\cC)$. But all elements in $L_\cC$ have degree greater than 2. 

\begin{Lemma}
	Let $\cC$ be a polyocollection. Then there exists a monomial $u\in S_\cC$ such that $L_\cC=(I_\cC : u)$. In particular $I_\cC \subseteq L_\cC$.
	\label{lemma-Romeo}
\end{Lemma}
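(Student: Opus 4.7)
The plan is to establish this via the standard saturation-type result relating a binomial ideal generated by "visible" relations to the lattice ideal of the lattice they span. I would proceed in two main steps: first show the easy containment $I_\cC \subseteq L_\cC$; then show that for some monomial $u$ one has $u \cdot L_\cC \subseteq I_\cC$. Equality $L_\cC = (I_\cC : u)$ follows from these two inputs together with primality of $L_\cC$ and the fact that $L_\cC$ contains no monomials (both established in the preceding theorem and remark).

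For the containment $I_\cC \subseteq L_\cC$, I would argue that every generator $f_I = x_a x_b - x_c x_d$ of $I_\cC$ comes from a vector $\mathbf{v}_I = \mathbf{v}_a + \mathbf{v}_b - \mathbf{v}_c - \mathbf{v}_d$ that lies in $\Lambda_\cC$. By the preceding proposition, the inner interval $I$ admits a decomposition $\operatorname{c}(I) = \bigcup_{j=1}^r \operatorname{c}(I_j)$ with $I_j \in \cC$ and pairwise disjoint interiors. A direct vertex-counting check (each interior lattice point of this tiling appears an equal number of times as a diagonal corner and as an anti-diagonal corner among the $I_j$) yields $\mathbf{v}_I = \sum_{j=1}^r \mathbf{v}_{I_j} \in \Lambda_\cC$. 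Hence $f_I = x^{\mathbf{v}_I^+} - x^{\mathbf{v}_I^-} \in L_\cC$.

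The main step is the existence of a monomial $u$ with $u \cdot L_\cC \subseteq I_\cC$. Since $S_\cC$ is Noetherian, $L_\cC$ has finitely many binomial generators $g_k = x^{\mathbf{e}_k^+} - x^{\mathbf{e}_k^-}$, each with $\mathbf{e}_k \in \Lambda_\cC$, say $\mathbf{e}_k = \sum_{I \in \cC} \lambda_{k,I} \mathbf{v}_I$. For a fixed $k$, I would pick a monomial $u_k = x^{\mathbf{m}_k}$ with $\mathbf{m}_k \in \mathbb{N}^n$ large enough that one can build a chain
\[
\mathbf{m}_k = \mathbf{w}_0,\ \mathbf{w}_1,\ \ldots,\ \mathbf{w}_N = \mathbf{m}_k + \mathbf{e}_k,
\]
in $\mathbb{N}^n$, where each step $\mathbf{w}_{t} - \mathbf{w}_{t-1} = \varepsilon_t \mathbf{v}_{J_t}$ with $\varepsilon_t \in \{\pm 1\}$ and $J_t \in \cC$ chosen so that over all $t$ each $J \in \cC$ is used with net multiplicity $\lambda_{k,J}$. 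Taking $\mathbf{m}_k$ coordinatewise large makes this straightforward. Writing out the telescoping
\[
x^{\mathbf{m}_k + \mathbf{e}_k^+} - x^{\mathbf{m}_k + \mathbf{e}_k^-} = \sum_{t=1}^N \bigl(x^{\mathbf{w}_t} - x^{\mathbf{w}_{t-1}}\bigr),
\]
each summand is, up to sign, a monomial multiple of $f_{J_t}$ (since $\mathbf{w}_t$ and $\mathbf{w}_{t-1}$ differ by $\pm \mathbf{v}_{J_t}$ and both have non-negative entries), hence lies in $I_\cC$. Thus $u_k g_k \in I_\cC$. Setting $u = \prod_k u_k$ gives $u \cdot L_\cC \subseteq I_\cC$, i.e.\ $L_\cC \subseteq (I_\cC : u)$.

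Finally, to conclude $(I_\cC : u) = L_\cC$: the reverse containment $(I_\cC : u) \subseteq L_\cC$ follows because $I_\cC \subseteq L_\cC$ implies $(I_\cC : u) \subseteq (L_\cC : u)$, and since $L_\cC$ is prime (Theorem~\ref{thm:lc_prime}) and contains no monomials (by the remark following it), $u \notin L_\cC$, so $(L_\cC : u) = L_\cC$. The main obstacle is the constructive telescoping in the previous paragraph; the only subtle point is choosing $\mathbf{m}_k$ (and the ordering of elementary steps $\pm \mathbf{v}_{J_t}$) so that the intermediate vectors $\mathbf{w}_t$ remain in $\mathbb{N}^n$, but this reduces to bounding partial sums of $\pm \mathbf{v}_I$ and adding a sufficiently large constant vector.
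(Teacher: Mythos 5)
Your proof is correct, and it is in substance the same argument the paper relies on: the paper gives no proof at all here, deferring entirely to \cite{MRR22primalityOfPolyominoes} (Lemma 2.1 there, for collections of cells), and what you have written out is precisely that standard lattice-ideal saturation argument — telescoping a lattice relation $\mathbf{e}=\sum_I\lambda_I\mathbf{v}_I$ into elementary steps, each realized as a monomial multiple of some $f_J$ with $J\in\cC$ after clearing denominators with a large monomial, and then using primality of $L_\cC$ (Theorem~\ref{thm:lc_prime}) plus the absence of monomials in $L_\cC$ to pass from $uL_\cC\subseteq I_\cC$ and $I_\cC\subseteq L_\cC$ to the exact equality $L_\cC=(I_\cC:u)$. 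The one point where the polyocollection setting genuinely requires a new check — and which you correctly isolate — is the containment $I_\cC\subseteq L_\cC$: the generators of $I_\cC$ range over all inner intervals while $\Lambda_\cC$ is generated only by the $\mathbf{v}_J$ with $J\in\cC$, so one must verify $\mathbf{v}_I\in\Lambda_\cC$ for every inner interval $I$; your appeal to the disjoint-interior tiling from the proposition preceding the definition of $I_\cC$, together with the cancellation of interior corners, does exactly this (the cancellation is the discrete additivity of $g\mapsto g(a)+g(b)-g(c)-g(d)$ over tilings of a rectangle by subrectangles). The only blemish is a harmless indexing slip in the telescoping: with $\mathbf{w}_0=\mathbf{m}_k$ and $\mathbf{w}_N=\mathbf{m}_k+\mathbf{e}_k$ the sum equals $x^{\mathbf{m}_k+\mathbf{e}_k}-x^{\mathbf{m}_k}=x^{\mathbf{m}_k-\mathbf{e}_k^-}\,g_k$, so the monomial you have produced for $g_k$ is $x^{\mathbf{m}_k-\mathbf{e}_k^-}$ rather than $x^{\mathbf{m}_k}$; this does not affect the conclusion.
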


\begin{proof}
	The proof is exactly the same as that provided by \cite[Lemma 2.1]{MRR22primalityOfPolyominoes} for a collection of cells.
\end{proof}

\begin{theorem}
	Let $\cC$ be a polyocollection. Then $I_\cC$ is a prime ideal if and only if $I_\cC=L_\cC$.
	\label{lemma-primality}
\end{theorem}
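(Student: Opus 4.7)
The proof plan is essentially a short primality argument built on top of the two facts already established in this subsection: that $L_\cC$ is prime (Theorem~\ref{thm:lc_prime}) and that $L_\cC = (I_\cC : u)$ for some monomial $u \in S_\cC$ (Lemma~\ref{lemma-Romeo}).

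The ``if'' direction is immediate: if $I_\cC = L_\cC$, then $I_\cC$ inherits primality from Theorem~\ref{thm:lc_prime}. So the content lies in the ``only if'' direction. My plan is to assume $I_\cC$ prime and to prove the reverse inclusion $L_\cC \subseteq I_\cC$ (the inclusion $I_\cC \subseteq L_\cC$ is already recorded in Lemma~\ref{lemma-Romeo}). Take an arbitrary $f \in L_\cC$. By definition of the colon ideal, $L_\cC = (I_\cC : u)$ gives $u f \in I_\cC$. Since $I_\cC$ is assumed prime, either $u \in I_\cC$ or $f \in I_\cC$.

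The key observation to rule out the first alternative is that $I_\cC$ contains no monomials: indeed, by Lemma~\ref{lemma-Romeo} we have $I_\cC \subseteq L_\cC$, and by the remark following Theorem~\ref{thm:lc_prime} the ideal $L_\cC$ contains no monomials (if it did, being prime, it would contain a variable, which is impossible since all generators of $L_\cC$ have degree at least two). Hence $u \notin I_\cC$, so the primality of $I_\cC$ forces $f \in I_\cC$. Since $f \in L_\cC$ was arbitrary, we obtain $L_\cC \subseteq I_\cC$, and together with Lemma~\ref{lemma-Romeo} this yields $I_\cC = L_\cC$.

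There is no real obstacle in this argument; everything reduces to invoking the earlier results cleanly. The only point that requires a moment's care is making explicit why the monomial $u$ cannot lie in $I_\cC$, and this is handled uniformly via the chain $I_\cC \subseteq L_\cC$ and the monomial-free nature of $L_\cC$.
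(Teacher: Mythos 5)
Your proposal is correct and follows essentially the same argument as the paper: both directions are handled identically, using Theorem~\ref{thm:lc_prime} for the ``if'' part and the colon-ideal identity $L_\cC=(I_\cC:u)$ from Lemma~\ref{lemma-Romeo} plus primality of $I_\cC$ for the ``only if'' part. Your justification that $u\notin I_\cC$ (via $I_\cC\subseteq L_\cC$ and the monomial-freeness of $L_\cC$) is exactly the point the paper leaves as an ``observe that,'' so you have merely made explicit a step the authors state without proof.
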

\begin{proof}
    If $I_\cC=L_\cC$, then trivially $I_\cC$ is prime by Theorem~\ref{thm:lc_prime}. Suppose that $I_\cC$ is prime. By Lemma~\ref{lemma-Romeo} we have that $I_\cC \subseteq L_\cC$ and $L_\cC=(I_\cC : u)$ for some monomial $u\in S_\cC$. So, if $f\in L_\cC$ then $fu\in I_\cC$ and observe that $u\notin I_\cC$. Since $I_\cC$ is prime then $f\in I_\cC$, so we proved $I_\cC \supseteq L_\cC$.  
\end{proof}

\begin{remark} \rm
	Let $\cC$ be a polyocollection and $I,J\in\cC$. We say that $I$ and $J$ are connected if there exists a sequence $I_1,\ldots,I_m\in \cC$ with $I_1=I$ and $I_m=J$ such that $I_i\cap I_{i+1}$ is a common vertex or a common edge for $i\in\{1,\ldots,m\}$.  We can consider in $\cC$ the equivalence relation $\simeq$, where $I\simeq J$ if and only if $I$ is connected with $J$. Observe that all equivalence classes of $\simeq$ are polyocollection, that we call the \emph{connected components} of $\cC$. If $\cC_1,\ldots,\cC_n$ are the connected components of $\cC$, it is not difficult to see that $L_{\cC}=\sum_{i=1}^n L_{\cC_i}$.
	
\end{remark}

\subsection{Primary decomposition in polyocollections}

Let $\cC$ be a polyocollection. A subset $X\subseteq V(\cC)$ is called an \textit{admissible set} of $\cC$ if for every inner interval $I$ of $\cC$, $X\cap V(I)=\emptyset$ or $X\cap V(I)$ contains the boundary of an edge of $I$. Observe that $\emptyset$ and $V(\cC)$ are admissible sets.

For an admissible set $X$ of $\cC$, define the following set:
$$\cG^{(X)}=\{I\in G(\cC) \mid V(I)\cap X=\emptyset\}.$$ 

\begin{proposition}
	Let $\cC$ be a polyocollection and $X$ be an admissible set. Then there exists a polyocollection $\cC^{(X)}$ such that  $\cG^{(X)}$ is the set of inner intervals of $\cC^{(X)}$.
\end{proposition}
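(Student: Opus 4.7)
The plan is to take $\cC^{(X)}$ to consist of the $\cG^{(X)}$-undecomposable elements: those $I\in\cG^{(X)}$ for which no collection $J_1,\ldots,J_n\in\cG^{(X)}$ with each $J_i\subsetneq I$ satisfies $\operatorname{c}(I)=\bigcup_{i=1}^n \operatorname{c}(J_i)$. With this definition in hand, I would separately prove (a) that $\cC^{(X)}$ is a polyocollection, and (b) that its set of inner intervals equals $\cG^{(X)}$.

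The two inclusions in (b) are relatively direct. For $G(\cC^{(X)})\subseteq\cG^{(X)}$, if $\operatorname{c}(I)=\bigcup_i \operatorname{c}(I_i)$ with $I_i\in\cC^{(X)}$, then each corner $a\in V(I)$ is an extreme point of the union and hence a vertex of some $I_i$; since $V(I_i)\cap X=\emptyset$ this forces $a\notin X$, so $V(I)\cap X=\emptyset$, and $I\in G(\cC)$ by substituting into the decompositions of the $I_i$. For the reverse inclusion, I would induct on the number of elements of $G(\cC)$ whose closure is contained in $\operatorname{c}(I)$: if $I$ is undecomposable in $\cG^{(X)}$ then $I\in\cC^{(X)}$; otherwise $I$ decomposes into strictly smaller $J_j\in\cG^{(X)}$, each of which is by the inductive hypothesis an inner interval of $\cC^{(X)}$, and concatenating decompositions gives $I\in G(\cC^{(X)})$.

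Part (a) is the main obstacle, and the no-strict-containment condition is where admissibility must do real work. Suppose for contradiction $I\subsetneq J$ with both minimal in $\cG^{(X)}$. The approach is to apply the earlier proposition on covers with disjoint interiors to write $\operatorname{c}(J)=\bigcup_l \operatorname{c}(K_l)$ with $K_l\in\cC$, separate the $K_l$ into those lying in $\operatorname{c}(I)$ and those meeting $\operatorname{c}(J)\setminus\operatorname{int}(\operatorname{c}(I))$, and repackage the latter into rectangular tiles $L_m\in\cG^{(X)}$ whose closures cover the frame around $\operatorname{c}(I)$ inside $\operatorname{c}(J)$. Admissibility supplies exactly the needed information: whenever some $K_l$ has a vertex in $X$, the hypothesis forces $X\cap V(K_l)$ to contain a full edge boundary of $K_l$, which sharply constrains how $X$ meets the cover and enables the corners of the $L_m$ to be chosen off $X$. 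The resulting decomposition $\operatorname{c}(J)=\operatorname{c}(I)\cup\bigcup_m \operatorname{c}(L_m)$ would then contradict minimality of $J$.

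The edge-intersection condition in (a) should follow in a similar spirit: if distinct $I,J\in\cC^{(X)}$ had $I\cap J$ neither empty, nor a single vertex, nor a common edge, then their overlap would produce a strictly smaller rectangle in $\cG^{(X)}$ sitting inside one of them, violating minimality. The hardest step — and the combinatorial heart of the proof — is the explicit construction of the frame rectangles $L_m$: tracking which lattice points may serve as their corners and certifying that each $L_m$ is genuinely an inner interval of $\cC$ with all four corners avoiding $X$ is exactly where admissibility is used non-trivially, and I expect this case analysis to be the bulk of the work.
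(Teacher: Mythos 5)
Your overall architecture is workable and your set of ``undecomposable'' intervals does ultimately coincide with the paper's choice (the paper takes $\cC^{(X)}$ to be the \emph{minimal} elements of $\cG^{(X)}$ under inclusion), but that choice of definition pushes all of the genuine difficulty into your part (a), and the step you propose there is both unexecuted and problematic. The frame construction is the entire content of the proposition and is left as a sketch: you must certify that each $L_m$ is an inner interval of $\cC$ (sub-rectangles of inner intervals are not inner intervals in general --- e.g.\ $[(2,1),(4,2)]$ inside the inner interval $[(1,1),(5,3)]$ of $\cC_1$ in Example~3.1) with all four corners in $V(\cC)\setminus X$, and the new corners created where edge-lines of $I$ meet $\partial \operatorname{c}(J)$ need not a priori even be vertices of $\cC$. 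Your ``separation'' of the tiles $K_l$ also presumes that no tile of the disjoint-interiors cover of $\operatorname{c}(J)$ straddles $\partial\operatorname{c}(I)$, which you have not shown (elements of $\cC$ can straddle boundaries of inner intervals, as the centre square of $\cC_4$ shows). Finally, the same gap infects your edge-intersection clause: producing ``a strictly smaller rectangle in $\cG^{(X)}$ sitting inside one of them'' contradicts \emph{minimality}, not \emph{undecomposability}, so under your definition it proves nothing until you have already established that a proper inner subinterval in $\cG^{(X)}$ forces decomposability --- which is exactly the unproven frame step.

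The paper's route avoids the annulus entirely and I would recommend adopting it. Defining $\cC^{(X)}$ as the minimal elements makes both clauses of the polyocollection condition reduce to exhibiting some $K\in\cG^{(X)}$ properly contained in $J$ (for the edge clause, $K$ is the inner interval of $\cC$ spanned by three suitable vertices forced off $X$; since $|X\cap V(K)|\le 1$ cannot contain an edge boundary, admissibility gives $X\cap V(K)=\emptyset$). The decomposition statement is then proved by a splitting lemma: if $I\in\cG^{(X)}$ and some $e\in V(\cC)\setminus X$ lies on an edge of $I$ or in $\mathrm{int}(I)$, then $\operatorname{c}(I)$ splits through $e$ into two, respectively four, inner intervals of $\cC$, each of which meets $X$ in at most one vertex, respectively in no edge boundary, hence in the empty set by admissibility; one then iterates on the pieces. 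Note that this single lemma also delivers your part (a) at once: if $I\subsetneq J$ with both in $\cG^{(X)}$, then some corner of $I$ is such an $e$ for $J$, so $J$ decomposes. Replacing the frame construction with this splitting argument would close the gap.
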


\begin{proof}
	Let $\cC^{(X)}$ be the set of minimal intervals in $\cG^{(X)}$ with respect to set inclusion. We prove that $\cC^{(X)}$ is a polyocollection. Let $I,J\in \cC^{(X)}$, by construction it is trivial that $I\nsubseteq J$. Suppose $I\cap J \notin E(I)\cap E(J)$ and there exist $F\in E(I)$ and $G\in E(J)$ such that $|F\cap G|>1$. Then $I,J\notin \cC$ and we can assume, without loss of generality, that there exist $[a,b]\in E(I)$ and $[x,y]\in E(J)$, in particular $a,b\in V(I)$ and $x,y\in V(J)$, such that $[a,b]$ and $[x,y]$ belong to the same line, $a\in[x,y]$ and $x\notin [a,b]$. Let $z\in V(J)$ such that $\{x,z\}$ is the boundary of an edge of $J$ (see for instance Figure~\ref{img:prop}). Since $I,J\in \cG^{(X)}$ and $X$ is an admissible set, then $a,x,z \notin X$.
\begin{figure}[h!]
		\subfloat[]{\includegraphics[scale=0.60]{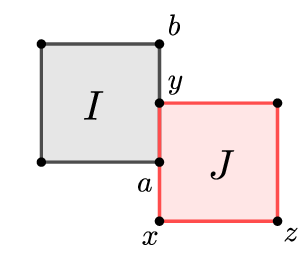}}\qquad \quad
		\subfloat[]{\includegraphics[scale=0.60]{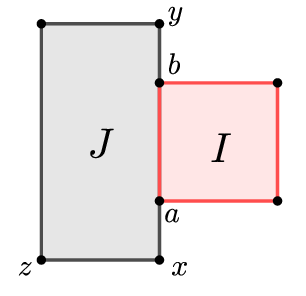}}\qquad \quad
		\caption{Two intervals $I,J$ such that $I\cap J \notin E(I)\cap E(J)$ and $|F\cap G|>1$ for some $F\in E(I)$ and $G\in E(J)$. }
		\label{img:prop}
	\end{figure}	
	
	 \noindent Moreover $a,x,z$ are the vertices of an inner interval $K$ of $\cC$, otherwise there exist two intervals $I_1, J_1\in \cC$, with $I_1\subseteq I$ and $J_1\subseteq J$, having no edges in common and two edges intersecting in two or more points, that contradicts $\cC$ is a polyocollection. In particular $K\subseteq J$ and $|X\cap V(K)|\leq 1$. Considering that $X$ is an admissible set, we have $X\cap V(K)=\emptyset$, obtaining $K\in \cG^{(X)}$ that contradicts the minimality of $J$. So $\cC^{(X)}$ is a polyocollection. 

     Finally, if $I\in \cG^{(X)}$ we show that $I$ is union of intervals in $\cC^{(X)}$ having pairwise disjoint common interior, that is $\cG^{(X)}$ is the set of inner intervals of the polyocollection $\cC^{(X)}$. So, put $I=[a,b]$ with $V(I)=\{a,b,c,d\}$. If $V(\cC)\cap (I\setminus V(I))\subseteq X$ then it is not difficult to see that $I\in \cC^{(X)}$, so suppose there exists $e\in V(\cC)\cap (I\setminus V(I))$ such that $e\notin X$. Consider two possibilities:  either $e$ belongs to an edge of $I$ or $e\in \mathrm{int}(I)$. 
     \begin{itemize}
         \item 
     Assume that $e$ belongs to an edge of $I$, say $[a,c]$ without loss of generality. Then it is not difficult to see that in such a case there exist $I_1,I_2$ inner intervals of $\cC$ with $\{a,e,d\}\subset V(I_1)$, $\{e,c,b\}\subset V(I_2)$ such that $\operatorname{c}(I)=\operatorname{c}(I_1) \cup \operatorname{c}(I_2)$. Moreover, $|X \cap I_1|\leq 1$ and $|X \cap I_2|\leq 1$ and since $X$ is an admissible set of $\cC$ we obtain $X \cap I_1=\emptyset$ and $X \cap I_2=\emptyset$, that is $I_1,I_2 \in \cG^{(X)}$. 
 \item
     Assume that $e\in \mathrm{int}(I)$. In such a case is not difficult to see that we can write $\operatorname{c}(I)=\operatorname{c}(I_1) \cup \operatorname{c}(I_2) \cup \operatorname{c}(I_3) \cup \operatorname{c}(I_4)$, with $\{a,e\}\subset V(I_1)$, $\{b,e\}\subset V(I_2)$, $\{c,e\}\subset V(I_3)$, $\{d,e\}\subset V(I_4)$ and $I_i$ is an inner interval of $\cC$ for $i\in \{1,2,3,4\}$. For all $i\in \{1,2,3,4\}$, observe that $X\cap V(I_i)$ does not contain the boundary of an edge of $I_i$, so  $X\cap V(I_i)=\emptyset$ considering that $X$ is an admissible set of $\cC$. Therefore $I_i\in \cG^{(X)}$ for all $i\in\{1,2,3,4\}$. 
     \end{itemize}
     It follows that in every case $I$ is union of intervals in $\mathbb{R}^2$ belonging to $\cG^{(X)}$ and having pairwise disjoint common interior. If such intervals are not minimal, with respect to set inclusion, we can repeat the previous arguments to these intervals obtaining the desired conclusion. 
\end{proof}

\noindent Denote by $\cC^{(X)}$ the polyocollection having $\cG^{(X)}$ as set of inner intervals. We define the following ideal
$$J_X=(\{x_a\mid a\in X\})+L_{\cC^{(X)}}\subseteq S_\cC.$$

\noindent It is not difficult to see that $I_\cC\subseteq J_X$ for all admissible sets $X$. In fact, by the previous result, $(\{x_a\mid a\in X\})+I_\cC=(\{x_a\mid a\in X\})+I_{\cC^{(X)}}\subseteq J_X$. Moreover, since the set of variables involved in $L_{\cC^{(X)}}$ is disjoint to the set $\{x_a\mid a\in X\}$, it is easy to see that $J_X$ is a prime ideal for all $X$ admissible sets of $\cC$.

\begin{remark} \rm 
    Let $\cC$ be a polyocollection and suppose that $I_\cC$ is not a prime ideal. Then $J_\emptyset=L_\cC$ is a minimal prime ideal of $I_\cC$; otherwise let $\mathfrak{p}$ be a prime ideal such that $I_\cC \subset \mathfrak{p} \subseteq L_\cC$. If $f\in L_\cC$, then there exists a monomial $u\in S_\cC$ such that $fu\in I_\cC \subset \mathfrak{p}$. If $u\in \mathfrak{p}$, since $\mathfrak{p}$ is prime there exists $x_a\in \mathrm{supp}(u)$ such that $x_a \in \mathfrak{p}$, but this is not possible since $L_\cC$ does not contain variables. So $u\notin \mathfrak{p}$, in particular $f\in \mathfrak{p}$. Therefore $\mathfrak{p}=L_\cC$.
	
\end{remark}

\begin{proposition}
    Let $\cC$ be a polyocollection and suppose that $I_\cC$ is not a prime ideal. Then for every minimal prime ideal $\mathfrak{p}$ of $I_\cC$, there exists an admissible set $X$ such that $\mathfrak{p}=J_X$
	\label{prop:minimal}
\end{proposition}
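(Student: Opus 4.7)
The plan is to define $X = \{a \in V(\cC) \mid x_a \in \mathfrak{p}\}$ and prove that this $X$ is admissible and satisfies $\mathfrak{p} = J_X$. The argument splits naturally into three steps.

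First, I would check admissibility of $X$. Take any inner interval $I$ of $\cC$ with diagonal corners $a,b$ and anti-diagonal corners $c,d$, so that $f_I = x_a x_b - x_c x_d \in I_\cC \subseteq \mathfrak{p}$. Assume $X \cap V(I) \neq \emptyset$; by symmetry of the four vertices I may suppose $x_a \in \mathfrak{p}$. Then $x_c x_d = x_a x_b - f_I \in \mathfrak{p}$, and primality of $\mathfrak{p}$ forces $x_c \in \mathfrak{p}$ or $x_d \in \mathfrak{p}$. In either case $X \cap V(I)$ contains the boundary $\{a,c\}$ or $\{a,d\}$ of an edge of $I$. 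The other three starting cases ($x_b$, $x_c$ or $x_d \in \mathfrak{p}$) are identical.

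Next, I would show $J_X \subseteq \mathfrak{p}$. The inclusion $(x_a \mid a \in X) \subseteq \mathfrak{p}$ is immediate from the definition of $X$. For the lattice part, the proposition just proved identifies the inner intervals of $\cC^{(X)}$ with $\cG^{(X)} \subseteq G(\cC)$, hence $I_{\cC^{(X)}} \subseteq I_\cC \subseteq \mathfrak{p}$. Applying Lemma~\ref{lemma-Romeo} inside the polyocollection $\cC^{(X)}$ yields a monomial $u \in S_{\cC^{(X)}}$ with $L_{\cC^{(X)}} = (I_{\cC^{(X)}} : u)$. Crucially, every variable occurring in $u$ is indexed by a vertex of $\cC^{(X)}$, and such vertices lie in $V(\cC) \setminus X$ by construction; thus no variable of $u$ belongs to $\mathfrak{p}$, and primality gives $u \notin \mathfrak{p}$. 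For $f \in L_{\cC^{(X)}}$ we then have $fu \in I_{\cC^{(X)}} \subseteq \mathfrak{p}$ with $u \notin \mathfrak{p}$, so $f \in \mathfrak{p}$, whence $L_{\cC^{(X)}} \subseteq \mathfrak{p}$ and $J_X \subseteq \mathfrak{p}$.

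Finally, the paragraph preceding the proposition already notes that $I_\cC \subseteq J_X$ and that $J_X$ is prime. So $I_\cC \subseteq J_X \subseteq \mathfrak{p}$, and minimality of $\mathfrak{p}$ forces $\mathfrak{p} = J_X$. The only place any real subtlety enters is step two, where one has to recognise that the \emph{auxiliary} polyocollection $\cC^{(X)}$ is precisely the right object on which to invoke Lemma~\ref{lemma-Romeo}, and to verify that its variables are disjoint from $X$ so that the colon-ideal monomial $u$ automatically escapes $\mathfrak{p}$. Everything else is a short prime-ideal case analysis or a formal application of minimality.
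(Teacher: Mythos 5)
Your proposal is correct and follows essentially the same route as the paper: take $X=\{a\in V(\cC)\mid x_a\in\mathfrak p\}$, verify admissibility by a short prime-ideal case analysis on the inner 2-minors, deduce $L_{\cC^{(X)}}\subseteq\mathfrak p$ via the colon-ideal monomial $u$ from Lemma~\ref{lemma-Romeo} applied to $\cC^{(X)}$ (whose variables avoid $X$, so $u\notin\mathfrak p$), and conclude by minimality since $J_X$ is prime and contains $I_\cC$. The only cosmetic difference is that you argue admissibility directly while the paper argues by contradiction; the content is identical.
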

\begin{proof}
    Let $\mathfrak{p}$ be a minimal prime ideal of $I_\cC$ and let $Y=\{a \in V(\cC)\mid x_a \in \mathfrak{p}\}$. Note that $(\{x_a\mid a\in Y\})+I_\cC \subset \mathfrak{p}$. We show that $Y$ is an admissible set for $\cC$. In fact, if $Y$ is not an admissible set, then there exists an inner interval $I$ of $\cC$ such that $|Y\cap V(I)|=1$ or $Y\cap V(I)=\{a,b\}$ with $a,b$ both diagonal (or anti-diagonal) corners of $I$. So, considering the binomial $f_I=x_a x_b-x_c x_d$, we obtain that either $x_a x_b\in \mathfrak{p}$ with $a,b \notin Y$ or $x_c x_d\in \mathfrak{p}$ with $c,d \notin Y$, that is a contradiction for the primality of $\mathfrak{p}$. 

    Now, we can write $(\{x_a\mid a\in Y\})+I_\cC=(\{x_a\mid a\in Y\})+I_{\cC^{(Y)}}$ and it is contained in $\mathfrak{p}$. If $f\in L_{\cC^{(Y)}}$ then by Lemma~\ref{lemma-Romeo}, there exists a monomial $u\in S_{\cC^{(Y)}}$ such that $fu\in I_{\cC^{(Y)}} \subseteq \mathfrak{p}$, and in particular $\mathrm{supp}(u)\cap \{x_a\mid a\in Y\}=\emptyset$. As consequence $u\notin \mathfrak{p}$, otherwise from the primality of $\mathfrak{p}$ there exists a $x_a\in \mathrm{supp}(u)$ such that $x_a\in \mathfrak{p}$, obtaining $a\in Y$, that is a contradiction. Therefore $f\in \mathfrak{p}$, so $(\{x_a\mid a\in Y\})+ L_{\cC^{(Y)}}\subseteq \mathfrak{p}$ and by the minimality of $\mathfrak{p}$ we obtain $(\{x_a\mid a\in Y\})+ L_{\cC^{(Y)}}=\mathfrak{p}$.
\end{proof}

\begin{theorem}
	Let $\cC$ be a polyocollection. Then $\sqrt{ I_\cC}=\bigcap_{X} J_X$ where $X$ moves overall the admissible sets of $\cC$.
	\label{primary-total}
\end{theorem}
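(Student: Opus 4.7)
The plan is to prove the claimed equality by showing the two containments separately, exploiting the results already established earlier in the section. The strategy is to recognize that all the heavy lifting has been done: this theorem is essentially a clean synthesis of Theorem~\ref{thm:lc_prime}, Lemma~\ref{lemma-Romeo}, and Proposition~\ref{prop:minimal}.

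For the containment $\sqrt{I_\cC} \subseteq \bigcap_X J_X$, I would first recall that each $J_X$ is a prime ideal: this was noted right after its definition, since $L_{\cC^{(X)}}$ is prime by Theorem~\ref{thm:lc_prime} and the variables $\{x_a : a \in X\}$ are disjoint from the variables appearing in $L_{\cC^{(X)}}$. It was also observed immediately after defining $J_X$ that $I_\cC \subseteq J_X$ for every admissible set $X$ (using that $L_{\cC^{(X)}}\supseteq I_{\cC^{(X)}}$ by Lemma~\ref{lemma-Romeo} applied to $\cC^{(X)}$). Since $J_X$ is prime, hence radical, we get $\sqrt{I_\cC}\subseteq J_X$ for every admissible $X$, and intersecting over all such $X$ yields the forward containment.

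For the reverse containment $\bigcap_X J_X \subseteq \sqrt{I_\cC}$, I would split into two cases. If $I_\cC$ is prime, then by Theorem~\ref{lemma-primality} we have $I_\cC = L_\cC$, and since $\emptyset$ is trivially an admissible set with $\cC^{(\emptyset)} = \cC$, we have $J_\emptyset = L_\cC = I_\cC = \sqrt{I_\cC}$; hence $\bigcap_X J_X \subseteq J_\emptyset = \sqrt{I_\cC}$. If $I_\cC$ is not prime, then by Proposition~\ref{prop:minimal} every minimal prime of $I_\cC$ is of the form $J_X$ for some admissible set $X$. Since $\sqrt{I_\cC}$ equals the intersection of all its minimal primes, and this intersection is taken over a (possibly proper) subset of $\{J_X : X \text{ admissible}\}$, we obtain
\[
\bigcap_X J_X \;\subseteq\; \bigcap_{\mathfrak{p}\text{ minimal prime of }I_\cC} \mathfrak{p} \;=\; \sqrt{I_\cC}.
\]
Combining the two directions gives the desired equality.

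There is no substantial obstacle here; the only subtlety worth flagging is making sure the case $I_\cC$ prime is handled (so that Proposition~\ref{prop:minimal}, which was stated under the non-prime hypothesis, is not invoked inappropriately) and noting explicitly that $\emptyset$ is admissible, so $J_\emptyset = L_\cC$ is one of the ideals in the intersection and provides the upper bound needed in the prime case.
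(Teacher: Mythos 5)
Your proposal is correct and follows essentially the same route as the paper: the forward containment comes from each $J_X$ being a prime ideal containing $I_\cC$, and the reverse containment from writing the radical as the intersection of the minimal primes, all of which are of the form $J_X$ by Proposition~\ref{prop:minimal}. Your explicit treatment of the case where $I_\cC$ is prime (via $J_\emptyset=L_\cC=I_\cC$) is a small refinement the paper leaves implicit, but it does not change the argument.
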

\begin{proof}
	We have $I_\cC\subseteq \bigcap_{X} J_X$ where $X$ moves overall the admissible sets of $\cC$, so $\sqrt{ I_\cC}\subseteq \bigcap_{X} J_X$. Moreover each minimal prime of $I_\cC$ is also a minimal prime of $\sqrt{ I_\cC}$. Finally, since a radical ideal is the intersection of its minimal prime
	ideals, the assertion of the theorem will follow. 
\end{proof}

\begin{remark}
If $\cC$ is a polyocollection and $I_\cC$ is radical, then Theorem~\ref{primary-total} provides a primary decomposition of it. Such decomposition is not minimal since the converse of Proposition~\ref{prop:minimal} does not hold in general. We will provide an example of this fact in the last section. 
\end{remark}

\section{Primary decomposition of closed path polyominoes}\label{sec:closedpath}

\noindent In this section, we consider a particular class of polyominoes, called \emph{closed paths}, for which we provide the minimal primary decomposition of the polyomino ideal. We already observed that a collection of cells (or a polyomino in particular) is also a polyocollection, but with respect to the previous section we give a more detailed (and combinatorial) description of the primary decomposition, taking advantage of the structure of closed path. 



\subsection{Closed paths and their zig-zag walks}
In according to \cite{Cisto_Navarra_closed_path}, we recall the definition of a \textit{closed path polyomino}, and the configuration of cells characterizing its primality. Closed paths are polyominoes contained in the class of \textit{thin polyominoes}, introduced in \cite{MRR22primalityOfPolyominoes}, that are polyominoes not containing the square tetromino as subpolyomino. 

\begin{definition}
We say that a polyomino $\cP$ is a \textit{closed path} if it is a sequence of cells $A_1,\dots,A_n, A_{n+1}$, $n>5$, such that:
\begin{enumerate}
	\item $A_1=A_{n+1}$;
	\item $A_i\cap A_{i+1}$ is a common edge, for all $i=1,\dots,n$;
	\item $A_i\neq A_j$, for all $i\neq j$ and $i,j\in \{1,\dots,n\}$;
	\item For all $i\in\{1,\dots,n\}$ and for all $j\notin\{i-2,i-1,i,i+1,i+2\}$ then $V(A_i)\cap V(A_j)=\emptyset$, where $A_{-1}=A_{n-1}$, $A_0=A_n$, $A_{n+1}=A_1$ and $A_{n+2}=A_2$. 
\end{enumerate}
\end{definition}

\begin{figure}[h!]
	\centering
	\includegraphics[scale=0.6]{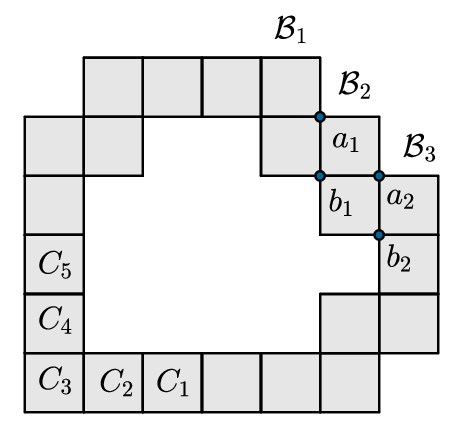}
	\caption{A closed path with an $L$-configuration and a ladder of three steps.}
	\label{Figura:L conf + Ladder}
\end{figure}
A path of five cells $C_1, C_2, C_3, C_4, C_5$ of $\cP$ is called an \textit{L-configuration} if the two sequences $C_1, C_2, C_3$ and $C_3, C_4, C_5$ go in two orthogonal directions. A set $\cB=\{\cB_i\}_{i=1,\dots,n}$ of maximal horizontal (or vertical) blocks of rank at least two, with $V(\cB_i)\cap V(\cB_{i+1})=\{a_i,b_i\}$ and $a_i\neq b_i$ for all $i=1,\dots,n-1$, is called a \textit{ladder of $n$ steps} if $[a_i,b_i]$ is not on the same edge interval of $[a_{i+1},b_{i+1}]$ for all $i=1,\dots,n-2$. For instance, in Figure \ref{Figura:L conf + Ladder} there is a closed path having an $L$-configuration and a ladder of three steps. A closed path has no zig-zag walks if and only if it contains an $L$-configuration or a ladder of at least three steps (see \cite[Section 6]{Cisto_Navarra_closed_path}).		

\begin{definition}\label{def:zig-zagwalk}
    A {\em zig-zag walk} of a collection of cells $\MP$ is a sequence $\MW : I_1,\ldots,I_l$ of distinct inner intervals of $\MP$ where,
    for all $i= 1,\ldots,l$, the interval $I_i$ has diagonal corners $v_i,z_i$ and anti-diagonal corners $u_i, v_{i+1}$
    or anti-diagonal corners $v_i,z_i$ and diagonal corners $u_i, v_{i+1}$, such that
    \begin{enumerate}
        \item \label{def:zig-zagwalk:intersection}
            $I_1 \cap I_l = \{v_1 = v_{l+1}\}$ and $I_i\cap I_{i+1} = \{v_{i+1}\}$ for all $i=1,\ldots,l-1$;
        \item \label{def:zig-zagwalk:edgeinterval}
            $v_i$ and $v_{i+1}$ are on the same edge interval of $\MP$, for all $i=1,\ldots, l$;
        \item \label{def:zig-zagwalk:ziintersection}
            for all $i,j\in \{1,\ldots,l\}$ with $i\neq j$, there exists no interval $J$ of $\MP$ such that $z_i$ and $z_j$ belongs to $J$. 
    \end{enumerate}
\end{definition}

\noindent By using the concept of zig-zag walk, in \cite[Theorem 6.2]{Cisto_Navarra_closed_path} the authors characterize the primality of a closed path $\cP$ proving that $I_{\cP}$ is prime if and only if $\cP$ does not contain any zig-zag walk. \\
Let $\cP$ be a collection of cells. Following the notations in \cite{MRR20primalityOfPolyominoes}, for a zig-zag walk $\MW : I_1,\ldots,I_l$ of $\MP$, let $f_\MW$ be the corresponding binomial of $\MW$, that is $f_\cW=\prod_{i=1}^{l}x_{z_i} - \prod_{i=1}^{l}x_{u_i}$. 
Moreover, we denote 
$$N(\cW)=\{v \in V(\MP) : v\in [v_i, v_{i+1}]\ \text{for} \ 1\leq i \leq l\}$$ and we call it the \emph{necklace} of $\cW$. Furthermore, it is shown in \cite[Proposition\ 3.5]{MRR20primalityOfPolyominoes} that if a polyomino $\cP$ has a zig-zag walk $\cW$, then:
$$\pm x_{v_1}f_\cW=\prod_{k>1}x_{z_k}f_{I_1}+\ldots+(-1)^{i+1}\prod_{j<i}x_{u_j}\prod_{k>i}x_{z_k}f_{I_i}+\ldots+(-1)^{l+1}\prod_{j<l}x_{u_j}f_{I_l}$$
where $f_{I_i}$ is the binomial related to the inner interval $I_i$, for $i\in\{1,\ldots,l\}$ and the sign depends if $v_1$ is a diagonal or an anti-diagonal corner of $I_1$. For $i\in \{2,\ldots,l\}$, it is easy to obtain a similar expression also for $x_{v_i}f_\cW$, by relabelling the indices $1,2,\ldots,l$. In particular, for all zig-zag walk $\cW$ then $x_{v_i}f_\cW\in I_\cP$ for all $1\leq i\leq l$. Observing the previous formula, we can also extend the same claim also in the case $\cP$ is a collection of cells. 

\begin{Lemma}\label{lem:onezigzagwalk}
    Let $\cP$ be a collection of cells with a zig-zag walk $\MW$ and $\pp$ be a minimal prime ideal of $I_\cP$. 
Under the notations as above, if $f_\MW \notin \pp$, then $x_v\in \pp$ for all $v\in N(\MW)$.
\end{Lemma}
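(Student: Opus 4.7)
The plan is to show that $x_v f_\cW \in I_\cP$ for every $v \in N(\cW)$; since $\pp \supseteq I_\cP$ is prime and $f_\cW \notin \pp$ by hypothesis, this immediately forces $x_v \in \pp$. For the corner vertices $v = v_i$ with $i \in \{1,\ldots,l\}$, the inclusion $x_{v_i} f_\cW \in I_\cP$ is already recorded by the telescoping identity displayed just before the statement of the lemma. So the essential content is the case of a vertex $v \in N(\cW)$ lying strictly between two consecutive $v_i, v_{i+1}$ on their common edge interval.

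For such a $v$, after cyclically relabelling I may assume $v \in [v_1, v_2]$ with $v \neq v_1, v_2$, and choose coordinates so that $v_1 = (a,c)$, $v_2 = (b,c)$, $u_1 = (a,d)$, $z_1 = (b,d)$ with $a < b$ and $c < d$ (the case when $v_1, z_1$ are the anti-diagonal corners of $I_1$, and the vertical orientation, are symmetric and produce the same relations below). Then $v = (x,c)$ with $a < x < b$, and I introduce the auxiliary vertex $v' = (x,d)$ on the opposite horizontal edge of $I_1$. Since every cell of the inner interval $I_1$ belongs to $\cP$ and $a < x < b$, the vertex $v'$ lies in $V(\cP)$, and the two sub-rectangles $[v_1, v']$ and $[v, z_1]$ are inner intervals of $\cP$. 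Their inner 2-minors yield the two key relations
\[
x_{v_1} x_{v'} - x_{u_1} x_v \in I_\cP \quad\text{and}\quad x_v x_{z_1} - x_{v'} x_{v_2} \in I_\cP.
\]

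With these two 2-minors in hand, I would run a telescoping that mimics the one establishing $x_{v_1} f_\cW \in I_\cP$: starting from $x_v \prod_{k=1}^{l} x_{z_k}$, first use the second relation to replace $x_v x_{z_1}$ by $x_{v'} x_{v_2}$, and then iteratively substitute $x_{v_j} x_{z_j} \equiv x_{u_j} x_{v_{j+1}} \pmod{I_\cP}$ coming from $f_{I_j}$ for $j = 2,\ldots,l$ (using $v_{l+1} = v_1$ at the last step). This rewrites $x_v \prod_{k=1}^{l} x_{z_k}$ as $x_{v'} x_{v_1} \prod_{k=2}^{l} x_{u_k}$ modulo $I_\cP$, whence
\[
x_v f_\cW \;\equiv\; \Big(\prod_{k=2}^{l} x_{u_k}\Big)\big(x_{v_1} x_{v'} - x_{u_1} x_v\big) \pmod{I_\cP},
\]
and the right-hand side lies in $I_\cP$ by the first relation. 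The only delicate point I anticipate is the geometric bookkeeping: verifying that $v' \in V(\cP)$ and that $[v_1, v']$ and $[v, z_1]$ really are inner intervals of $\cP$ in each orientation of $I_1$ (diagonal vs.\ anti-diagonal labelling of $(v_1, z_1)$, and horizontal vs.\ vertical common edge). This is a routine consequence of the inner-interval property of $I_1$, but it is the step where care is needed; the algebraic telescoping afterwards is a direct adaptation of the one already appearing in the preamble.
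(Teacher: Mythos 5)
Your proof is correct, and it takes a mildly different route from the paper's. Both arguments rest on the same ingredients: the telescoping identity giving $x_{v_i} f_\cW \in I_\cP$ at the corner vertices, and, for an interior vertex $v \in\, ]v_i,v_{i+1}[$, the two inner $2$-minors of the sub-rectangles of $I_i$ cut out by $v$ and its opposite vertex $v'$ (which the paper denotes $b_v$). The difference is in how the interior case is closed. The paper stays inside $\pp$: having already established $x_{v_i}, x_{v_{i+1}} \in \pp$, the two $2$-minors give $x_v x_{u_i} \in \pp$ and $x_v x_{z_i} \in \pp$, so if $x_v \notin \pp$ then both $x_{u_i}$ and $x_{z_i}$ lie in $\pp$, forcing $f_\cW \in \pp$ and a contradiction. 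You instead prove the stronger, $\pp$-independent fact that $x_v f_\cW \in I_\cP$ for \emph{every} $v \in N(\cW)$, by splicing the relation $x_v x_{z_i} - x_{v'} x_{v_{i+1}}$ into the telescoping sum and closing it with $x_{v_i}x_{v'} - x_{u_i}x_v$; primality is then invoked only once at the end. Your version genuinely extends the identity quoted before the lemma from the corners $v_i$ to the whole necklace and could be recorded as a standalone statement about $I_\cP$; the paper's version is shorter because it recycles the conclusion for the corners instead of redoing the telescoping. Your geometric bookkeeping is also sound: since $I_i$ is an inner interval, every cell of the two sub-rectangles belongs to $\cP$, so both are inner intervals and $v' \in V(\cP)$, in either orientation of $I_i$.
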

\begin{proof}
    Since $f_\MW\notin \pp$, by above discussion, we get that $x_{v_i}\in \pp$ for all $1\leq i\leq l$.
Fix an $i$. We have $x_{u_i}x_{v} - x_{v_i}x_{b_v} \in I_{\MP}$ for all $v\in ]v_i, v_{i+1}[$, where $b_v\in V(\MP)$ such that the rectangle with the vertices $u_i, v, v_i$ and $b_v$ is in $\MP$.
    Since $x_{v_i} \in \pp$, $x_vx_{u_i}\in \pp$. 
    Therefore, either $x_v\in \pp$ for all $v\in ]v_i, v_{i+1}[$ or $x_{u_i}\in \pp$.\\
    \noindent Similarly, by repeating the above argument for $x_{b_v}x_{v_{i+1}} - x_vx_{z_i} \in I_{\MP}$ for all $v\in ]v_i, v_{i+1}[$, where $b_v\in V(\MP)$ such that the rectangle with the vertices $z_i, v, v_{i+1}$ and $b_v$ is in $\MP$. 
We get that either $x_v\in \pp$ for all $v\in ]v_i, v_{i+1}[$ or $x_{z_{i}}\in \pp$.
If $x_{u_i}, x_{z_i}\in \pp$, then  $f_\MW \in \pp$ which contradicts the assumption. Hence, $x_v \in \pp$ for all $v\in ]v_i, v_{i+1}[$. 
\end{proof}

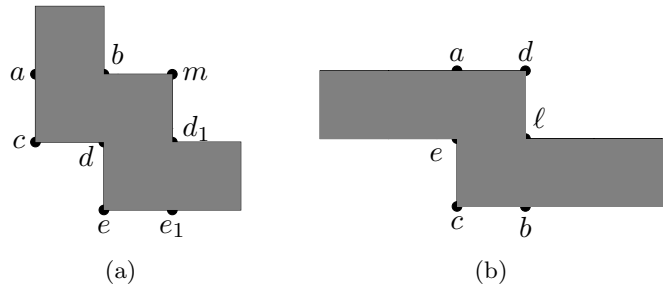
\begin{figure}[h]
	
	\subfloat[]{
		\begin{tikzpicture}[scale=0.9]
			\draw[] (0,2)--(2,2)--(2,0)--(1,0)--(1,1)--(0,1)--(0,2) 
			(1,2)--(1,1)--(2,1)
			(2,0)-- (3,0)--(3,1)--(2,1)
			(0,2)--(0,3)--(1,3)--(1,2);
			\filldraw[black] (2,2) circle (2.0pt) node[anchor=west]  {$m$};
			\filldraw[black] (1,0) circle (2.0pt) node[anchor=north]  {$e$};
			\filldraw[black] (2,0) circle (2.0pt) node[anchor=north]  {$e_1$};
			\filldraw[black] (1,.8) circle (0pt) node[anchor=east]  {$d$};
			\filldraw[black] (1,1) circle (2.0pt);
			\filldraw[black] (0,1) circle (2.0pt) node[anchor=east]  {$c$};
			\filldraw[black] (1.2,2) circle (0pt) node[anchor=south]  {$b$};
			\filldraw[black] (1,2) circle (2.0pt);
			\filldraw[black] (2,1.2) circle (0pt) node[anchor=west]  {$d_1$};
			\filldraw[black] (2,1) circle (2.0pt);
			\filldraw[black] (0,2) circle (2.0pt) node[anchor=east]  {$a$};
			\filldraw[black] (1.5,.5) circle (0pt) node[anchor=center]  {$Z$};
			\filldraw[black] (1.5,1.5) circle (0pt) node[anchor=center]  {$X$};
			\filldraw[black] (.5,1.5) circle (0pt) node[anchor=center]  {$Y$};
			\filldraw[black] (2.5,.5) circle (0pt) node[anchor=center]  {$W$};
			
			\fill[fill=gray, fill opacity=0.3] (0,1)-- (0,3)--(1,3)--(1,2)--(2,2)--(2,1)--(3,1)--(3,0)--(1,0)--(1,1);
		\end{tikzpicture}
	\label{fig:threestepladder}
	}\qquad
	\subfloat[]{
		\begin{tikzpicture}[scale=0.9]
			\draw[] (0,2)--(2,2)--(2,0)--(1,0)--(1,1)--(0,1)--(0,2) 
			(1,2)--(1,1)--(2,1)
			(2,0)-- (4,0)--(4,1)--(2,1) (3,0)--(3,1)
			(0,2)--(-1,2)--(-1,1)--(0,1);
			\filldraw[black] (2,2) circle (2.0pt) node[anchor=south]  {$d$};
			\filldraw[black] (1,0) circle (2.0pt) node[anchor=north]  {$c$};
			\filldraw[black] (2,0) circle (2.0pt) node[anchor=north]  {$b$};
			\filldraw[black] (1,2) circle (2.0pt) node[anchor=south]  {$a$};
			\filldraw[black] (.7,1) circle (0pt) node[anchor=north]  {$e$};
			\filldraw[black] (1,1) circle (2.0pt);
			\filldraw[black] (2.2,1) circle (0pt) node[anchor=south]  {$\ell$};
			\filldraw[black] (2,1) circle (2.0pt);
			\filldraw[black] (.5,1.5) circle (0pt) node[anchor=center]  {$W$};
			\filldraw[black] (1.5,1.5) circle (0pt) node[anchor=center]  {$X$};
			\filldraw[black] (1.5,.5) circle (0pt) node[anchor=center]  {$Y$};
			\filldraw[black] (2.5,.5) circle (0pt) node[anchor=center]  {$Z$};
			\fill[fill=gray, fill opacity=0.3] (1,0)-- (4,0)--(4,1)--(1,1);
			\fill[fill=gray, fill opacity=0.3] (2,2)-- (-1,2)--(-1,1)--(2,1);
	\end{tikzpicture}
	\label{fig:Lshape}}
	\caption{Possible changes of direction in a closed path having a zig-zag walk.}
\end{figure}

\noindent From now onwards, we assume that $\MP$ is a non-prime closed path polyomino. 
By~\cite[Theorem\ 6.2]{Cisto_Navarra_closed_path}, $\MP$ contains a zig-zag walk. As a consequence, by \cite[Proposition 6.1]{Cisto_Navarra_closed_path} $\cP$ does not contain any $L$-configuration, so any two maximal inner intervals of $\MP$ intersect themselves in the cells  displayed in Figure~\ref{fig:threestepladder} or~\ref{fig:Lshape} (up to reflections or rotations).  
We wish to determine all zig-zag walks of $\MP$. 

\begin{discussionbox}\label{dis:zig-zag}
Let $\MW : I_1,\ldots,I_l$ be a zig-zag walk of $\MP$. 
We show that from the labelled vertices of Figure~\ref{fig:threestepladder}, only $d\in \{v_0,\ldots,v_{l}\}$.
By~\eqref{def:zig-zagwalk:ziintersection} of Definition~\ref{def:zig-zagwalk}, $I_i$ and $I_{j}$ are not contained in a maximal inner interval if $i\neq j$.
By~\eqref{def:zig-zagwalk:intersection} of Definition~\ref{def:zig-zagwalk}, $m, a, c, e$ and  $e_1$ as in Figure~\ref{fig:threestepladder} do not belong to $\{v_0,\ldots,v_{l}\}$.  
Since $[v_1,v_2]\cup \cdots \cup [v_l,v_1]$ is a closed path of edge intervals, we get that $d\in \{v_0,\ldots,v_{l}\}$.
Now it suffice to show that $d_1, b \notin \{v_0,\ldots,v_{l}\}$.
Assume that $d_1 \in \{v_0,\ldots,v_{l}\}$.
Then, there exists a $j$ such that $d, d_1 \in V(I_j)$. 
By Definition~\ref{def:zig-zagwalk}, $I_j =\{X\}$ or $I_j=\{Z\}$.
We may assume that $I_{j-1}$ is the inner interval such that $d_1\in V(I_{j-1})$ and $W \in I_{j-1}$, and $I_{j+1}$ is the inner interval such that $d\in V(I_{j+1})$ and $Y \in I_{j+1}$.
If $I_j =\{X\}$, then $b, d\in I_j \cap I_{j+1}$ and if $I_j =\{Z\}$, then $d_1, e_1 \in I_j \cap I_{j-1}$.
Both of them contradict~\eqref{def:zig-zagwalk:intersection} of Definition~\ref{def:zig-zagwalk};
thus, $d_1 \notin \{v_0,\ldots,v_{l}\}$. 
Similarly,  $b \notin \{v_0,\ldots,v_{l}\}$. 
Hence, from the labelled vertices of Figure~\ref{fig:threestepladder}, only $d\in \{v_0,\ldots,v_{l}\}$.\\
\noindent We now show that from the labelled vertices of Figure~\ref{fig:Lshape}, exactly one of $e$ and  $\ell$ is in $\{v_0,\ldots,v_{l}\}$.
By~\eqref{def:zig-zagwalk:intersection} of Definition~\ref{def:zig-zagwalk}, $a, b, c$ and  $d$ as in Figure~\ref{fig:Lshape} do not belong to $\{v_0,\ldots,v_{l}\}$.  
Since $v_i$ is the vertex where the inner intervals $I_{i-1}$ and $I_{i}$ intersects and, $e$ and $\ell$ are the vertices belongs to two maximal inner intervals, either $e\in \{v_0,\ldots,v_{l}\}$ or $\ell \in \{v_0,\ldots,v_{l}\}$. 
Now, we show that exactly one of $e$ and $\ell$ is in $\{v_0,\ldots,v_{l}\}$.
Suppose both $e$ and $\ell$ are in $\{v_0,\ldots,v_{l}\}$. 
Then there exists a $j\in \{1,\ldots,l\}$ such that $I_j = \{X\}$ or $I_j=\{Y\}$.
Let $I_{j-1}$ (respectively $I_{j+1}$) be the inner interval such that $e\in V(I_{j-1})$ (respectively $\ell\in V(I_{j+1})$) and $W \in I_{j-1}$ (respectively $Z\in I_{j+1}$).
If $I_j = \{X\}$, then $a, e \in I_{j-1} \cap I_j$ which is a contradiction.
Similarly, if $I_j = \{Y\}$, then $b, \ell\in I_{j} \cap I_{j+1}$ which is a contradiction. Hence, exactly one of $e$ and $\ell$ is in $\{v_0,\ldots,v_{l}\}$.
\end{discussionbox}

\begin{figure}[h]
\begin{center}
\begin{tikzpicture}[scale=0.8]
    \draw[] (1,0)--(8,0)--(8,1)--(1,1)--(1,1)--(1,0) 
    (2,0)--(2,2) (3,0)--(3,1) (4,0)--(4,1) (5,0)--(5,1) (6,0)--(6,1) (6,0)--(6,1) (7,0)--(7,1)
    (0,2)--(2,2) (0,1)--(1,1)--(1,2)
    (0,1)--(0,4)--(1,4)--(1,2)--(0,2) (0,3)--(2,3)--(2,5)
    (1,4)--(4,4)--(4,5)--(1,5)--(1,4) (3,4)--(3,5)
    (3,4)--(3,3)--(6,3)--(6,4)--(4,4)--(4,3) (5,4)--(5,3)
    (5,4)--(5,5)--(8,5)--(8,4)--(6,4)--(6,5) (7,4)--(7,5)
    (7,4)--(7,3)--(8,3)--(8,4)
    (8,4)--(9,4)--(9,1)--(8,1)--(8,3)--(9,3)
    (9,2)--(7,2)--(7,1)
    ;

    \filldraw[black] (.7,1) circle (0pt) node[anchor=north]  {$d_{1}$};
    \filldraw[black] (1,1) circle (2.0pt);
    \filldraw[black] (.7,4) circle (0pt) node[anchor=south]  {$d_{2}$};
    \filldraw[black] (1,4) circle (2.0pt);
    \filldraw[black] (8.3,1) circle (0pt) node[anchor=north]  {$d_{4}$};
    \filldraw[black] (8,1) circle (2.0pt);
    \filldraw[black] (8.3,4) circle (0pt) node[anchor=south]  {$d_{3}$};
    \filldraw[black] (8,4) circle (2.0pt);
   
    \filldraw[black] (2.7,4) circle (0pt) node[anchor=north]  {$b_{1}$};
    \filldraw[black] (3,4) circle (2.0pt);
    \filldraw[black] (3.7,4) circle (0pt) node[anchor=north]  {$b_{2}$};
    \filldraw[black] (4,4) circle (2.0pt);
    \filldraw[black] (4.7,4) circle (0pt) node[anchor=south]  {$c_{1}$};
    \filldraw[black] (5,4) circle (2.0pt);
    \filldraw[black] (5.7,4) circle (0pt) node[anchor=south]  {$c_{2}$};
    \filldraw[black] (6,4) circle (2.0pt);
    
    \fill[fill=gray, fill opacity=0.3] (1,0) -- (8,0)-- (8,1) -- (1,1);
    \fill[fill=gray, fill opacity=0.3] (2,1) -- (2,2)-- (0,2) -- (0,1);
    \fill[fill=gray, fill opacity=0.3] (0,2) -- (1,2)-- (1,4) -- (0,4);
    \fill[fill=gray, fill opacity=0.3] (1,3) -- (2,3)-- (2,5) -- (1,5);
    \fill[fill=gray, fill opacity=0.3] (2,4) -- (4,4)-- (4,5) -- (2,5);

    \fill[fill=gray, fill opacity=0.3] (3,3) -- (6,3)-- (6,4) -- (3,4);
    \fill[fill=gray, fill opacity=0.3] (5,4) -- (8,4)-- (8,5) -- (5,5);
    \fill[fill=gray, fill opacity=0.3] (7,3) -- (9,3)-- (9,4) -- (7,4);
    \fill[fill=gray, fill opacity=0.3] (9,3) -- (9,1)-- (7,1) -- (7,2)--(8,2)--(8,3);
\end{tikzpicture}
\caption{An example of a closed path having zig-zag walks.}\label{fig:examplezig-zag}
\end{center}
\end{figure}
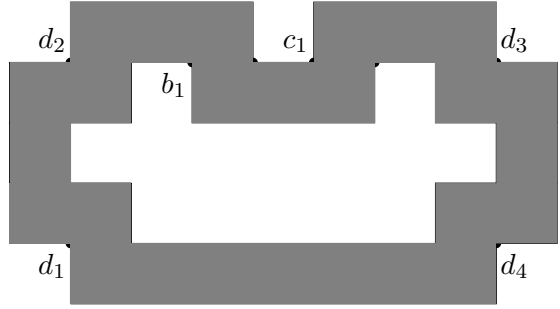

\begin{example}
    We illustrate the above discussion now.
    Let $\MP$ be the polyomino as shown in the Figure~\ref{fig:examplezig-zag}.
    Note that $\MP$ is a non-prime polyomino.
    Let $\MW : I_1,\ldots,I_l$ be a zig-zag walk of $\MP$.
    By Discussion~\ref{dis:zig-zag}, $\{d_1,d_2, d_3, d_4\} \subset \{v_1,\ldots, v_l\}$. 
    Also, by Discussion~\ref{dis:zig-zag}, exactly one of $b_1$ and $b_2$ is in $\{v_1,\ldots, v_l\}$ and exactly one of $c_1$ and $c_2$ is in $\{v_1,\ldots, v_l\}$.
    So in conclusion, we get that $l=6$ and $\{v_1,\ldots, v_6\}$ is either $\{d_1,d_2, d_3, d_4, b_1, c_1\}$ or 
   $\{d_1,d_2, d_3, d_4, b_1, c_2\}$ or $\{d_1,d_2, d_3, d_4, b_2, c_1\}$ or $\{d_1,d_2, d_3, d_4, b_2, c_2\}$.
\end{example}

\begin{remark}
    Based on the Discussion~\ref{dis:zig-zag}, we make few remarks on the zig-zag walks of a non-prime closed path polyomino.
    \begin{enumerate}
        \item 
            The number of inner intervals in a zig-zag is equal to the number of maximal inner intervals in $\MP$ with at least three cells.
        \item
            Let $\MW : I_1,\ldots,I_l$ and $\MW': I'_1, \ldots, I'_l$ be two zig-zag walks of $\MP$. Then $N(\cW)=N(\cW')$.
    \item\label{rem:zigzagbinomial} Under the labelling of Figure~\ref{fig:threestepladder} $x_b,x_{d_1}\in N(\MP)$ and if $f_\cW=\prod_{i=1}^{l}x_{z_i} - \prod_{i=1}^{l}x_{u_i}$ is any zig-zag walk then either $x_c\in \mathrm{supp}(\prod_{i=1}^{l}x_{z_i})$ and $x_e\in \mathrm{supp}(\prod_{i=1}^{l}x_{u_i})$ or $x_c\in \mathrm{supp}(\prod_{i=1}^{l}x_{u_i})$ and $x_e\in \mathrm{supp}(\prod_{i=1}^{l}x_{z_i})$.        
    \end{enumerate}
    \label{necklace}
\end{remark}

\begin{notation}\label{not:primeideals}
\rm    Consider the following notation:
    \begin{itemize}
    \item $N(\MP)\eqdef N(\cW)$, for some zig-zag walk $\cW$ of $\cP$. We observed that this set is unique and does not depend on the choice of the zig-zag walk $\cW$.
        \item $M(\cP)\eqdef\{m \in V(\cP) \ \text{as shown in Figure~\ref{fig:threestepladder}, up to reflections or rotations}\}$
        \item $R(\cP)\eqdef\{x_ax_b-x_cx_d: a, b,c, d \ \text{are as shown in Figure~\ref{fig:Lshape}, up to reflections or rotations}\}$
    \end{itemize}

Define the following ideals:
\[\pp_1 = I_\MP + (f_\MW : \MW \ \text{is a zig-zag walk of}\ \MP)\]
   \[\pp_2 = (x_a : a\in N(\MP)) +(x_m : m \in M(\MP))+(R(\cP)).\]
\end{notation}

\begin{Lemma}\label{lem:i_pinp2} 
    Let $\cP$ be a non-prime closed path polyomino. Then $I_\MP \subset \pp_2$.
\end{Lemma}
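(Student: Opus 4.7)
My plan is to show $f_I\in\pp_2$ for every inner interval $I$ of $\MP$, which suffices because these binomials generate $I_\MP$. I would rely on two key structural facts: first, that $\MP$ is thin (closed paths contain no square tetromino), so every inner interval of $\MP$ is a $1\times k$ or $k\times 1$ subblock of some maximal inner interval; and second, that any two maximal inner intervals of $\MP$ meet in one of the two configurations of Figure~\ref{fig:threestepladder} or Figure~\ref{fig:Lshape}, by \cite[Proposition~6.1]{Cisto_Navarra_closed_path}. This confines the casework sharply.

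The argument splits into three cases. First, if $I$ lies entirely in a straight stretch of some maximal inner interval $\cB$, then along $\cB$ the zig-zag walk traces one of the two long sides of $\cB$ through an edge interval of $\MP$, so that every lattice vertex of this side lies in $N(\MP)$. Since both the diagonal pair $\{\alpha,\beta\}$ and the anti-diagonal pair $\{\gamma,\delta\}$ of corners of $I$ contain a vertex on this side, both monomials of $f_I=x_\alpha x_\beta-x_\gamma x_\delta$ are divisible by a variable in $\pp_2$, whence $f_I\in\pp_2$. Second, if $I$ involves a vertex of a three-step-ladder junction (Figure~\ref{fig:threestepladder}), then by Discussion~\ref{dis:zig-zag} and Remark~\ref{necklace}(\ref{rem:zigzagbinomial}) we have $\{d,b,d_1\}\subseteq N(\MP)$, while $m\in M(\MP)$; a direct inspection of the finitely many possible inner intervals near this junction (the cells $Y,X,Z,W$ and the rank-two blocks $[Y,X],[X,Z],[Z,W]$, together with their reflections and rotations) shows that each monomial of $f_I$ is divisible by one of $x_d,x_b,x_{d_1},x_m$. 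Third, if $I$ involves a vertex of an L-junction (Figure~\ref{fig:Lshape}), then either $I=[X,Y]$ is the straddle block, in which case $f_{[X,Y]}=x_cx_d-x_ax_b$ is exactly $-1$ times an element of $R(\cP)$ and hence lies in $\pp_2$ directly; or $I$ is some other inner interval near the junction, and the binomials $f_X=x_ex_d-x_ax_\ell$, $f_Y=x_cx_\ell-x_ex_b$, and their analogues on the adjacent cells have each monomial divisible by $x_e$ or $x_\ell$, hence they lie in $\pp_2$ provided $\{e,\ell\}\subseteq N(\MP)$.

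The main obstacle is verifying that both $e$ and $\ell$ belong to $N(\MP)$ at an L-junction, even though Discussion~\ref{dis:zig-zag} shows that only one of them is a turning vertex $v_j$ of the walk. The crucial observation is that $N(\MP)$ is defined as the union of the segments $[v_i,v_{i+1}]\cap V(\MP)$, so it contains not only the $v_j$'s themselves but every lattice point strictly between consecutive ones along the relevant edge interval of $\MP$. Concretely, the walk's inner interval associated with one of the two maximal inner intervals meeting at the L-junction spans the whole maximal block, with $v_j$ and $v_{j+1}$ at opposite endpoints of one long side; whichever of $e$ and $\ell$ is not a $v_j$ then lies strictly between $v_j$ and $v_{j+1}$ on that side, and hence belongs to $N(\MP)$.
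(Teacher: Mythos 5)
Your proposal is correct and follows essentially the same route as the paper: both arguments reduce to checking, for each inner interval, that every generator $f_I$ has each of its two monomials divisible by a variable from $N(\MP)\cup M(\MP)$ unless $f_I$ is (up to sign) one of the binomials in $R(\cP)$, using the classification of junctions into the two configurations of Figures~\ref{fig:threestepladder} and~\ref{fig:Lshape}. The paper organizes the case split by which labelled vertex occurs as a corner of the inner interval rather than by which region of $\MP$ the interval occupies, but the substance — in particular the observation that $N(\MP)$ contains the whole of one long side of each relevant maximal block, so that both $e$ and $\ell$ of Figure~\ref{fig:Lshape} lie in $N(\MP)$ even though only one is a turning vertex — is identical.
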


\begin{proof}
	Let $[p,q]$ be an inner interval of $\cP$, with $r,s$ as anti-diagonal corners. We prove that $f=x_px_q-x_rx_s$ belongs to $\pp_2$. Assume that, referring to Figure \ref{fig:Lshape}, $q\in\ \{a,d,\ell\}$ or $p\in\{e,c,b\}$. If $f\in R(\cP)$ then $f\in \pp_2$ trivially. Suppose that $f \notin R(\cP)$ and $q=d$. Then $s=\ell$, otherwise $f\in R(\cP)$, and in particular $p$ belongs to the edge interval of $\cP$ containing $e$ and $\ell$. It follows that $p,s\in N(\cP)$, so $x_px_q,x_rx_s\in \pp_2$, hence $f\in \pp_2$. The other cases can be proved in a similar way. Now assume that, referring to Figure \ref{fig:threestepladder}, $q\in \{b,m,d_1\}$ or $p\in\{a,c,d,e,e_1\}$. We may suppose that $q=m$, because one can prove all other cases with similar arguments. In such a case $p\in\{c,d,e_1\}$. Suppose that $p=c$. Then $r=a$ and $s=d_1$. Since $q=m\in M(\cP)$ and $d_1\in N(\cP)$, we get $x_px_q,x_rx_s\in \pp_2$, hence $f\in \pp_2$. The other two cases $p=d$ and $p=e$ can be proved similarly. For the other situations, coming from reflections or rotations of Figures \ref{fig:threestepladder} and \ref{fig:Lshape}, the proofs are similar to the previous ones. The last case to discuss is when no corners of $[p,q]$ is one of the vertices in Figures \ref{fig:threestepladder} and \ref{fig:Lshape}. In such a case, we can have one of the following four situations: either $r,q\in N(\cP)$, or $p,s\in N(\cP)$, or $p,r\in N(\cP)$, or $q,s\in N(\cP)$, depending on the shape of $\cP$. In each case it follows that $x_px_q,x_rx_s\in \pp_2$, hence $f\in \pp_2$.  
\end{proof}
   
\begin{proposition}\label{prop:p_2prime}
    Let $\cP$ be a non-prime closed path polyomino. Then, under the notations of Notation~\ref{not:primeideals}, $\pp_2$ is a prime ideal and $\mathrm{height}(\pp_2)=|\cP|$.
\end{proposition}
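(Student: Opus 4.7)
The plan is to show that $\pp_2$ is prime by realizing $S_\cP/\pp_2$ as a tensor product of $K$-domains, and then to compute its height by matching a dimension count against a combinatorial identity coming from the cyclic cell structure of the closed path.

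\textbf{Step 1 (Disjoint supports).} First I would verify, using Discussion~\ref{dis:zig-zag} together with the case analysis in the proof of Lemma~\ref{lem:i_pinp2}, that for every binomial $x_ax_b-x_cx_d\in R(\cP)$ the four variables involved lie outside $N(\cP)\cup M(\cP)$. Indeed the corners $a,b,c,d$ in Figure~\ref{fig:Lshape} are ``outer'' corners of an L-junction cell, whereas necklace vertices in $N(\cP)$ come from edge intervals between zig-zag corners and $M$-vertices come exclusively from three-step-ladder junctions. Next I would check that distinct binomials of $R(\cP)$ use pairwise disjoint variable sets: distinct L-junctions of the closed path are assigned to distinct pairs of consecutive maximal inner intervals, and any vertex shared between two neighboring junctions must lie in $N(\cP)\cup M(\cP)$ (hence has already been quotiented out), so the ``outer corner'' quadruples of different L-cells are disjoint.

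\textbf{Step 2 (Primality).} By Step~1 we obtain a $K$-algebra isomorphism
\[
S_\cP/\pp_2 \;\cong\; \Bigl(\bigotimes_{f\in R(\cP)} K[x_a,x_b,x_c,x_d]\big/(x_ax_b-x_cx_d)\Bigr)\,\otimes_K\, K[W],
\]
where $W$ indexes the variables of $V(\cP)\setminus(N(\cP)\cup M(\cP)\cup\mathrm{supp}(R(\cP)))$. Each factor $K[x_a,x_b,x_c,x_d]/(x_ax_b-x_cx_d)$ is a well-known three-dimensional Segre-type toric domain, and a tensor product of $K$-domains is a $K$-domain, so $\pp_2$ is prime.

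\textbf{Step 3 (Height).} From Step~2 we get $\dim(S_\cP/\pp_2)=|V(\cP)|-|N(\cP)|-|M(\cP)|-|R(\cP)|$, hence
\[
\mathrm{height}(\pp_2) \;=\; |N(\cP)|+|M(\cP)|+|R(\cP)|.
\]
To finish I need to show this sum equals $|\cP|$. I would argue this by walking around the cyclic sequence of cells of $\cP$, decomposing it into maximal inner intervals joined at corners. Each L-type corner accounts for two cells and contributes one binomial to $R(\cP)$ together with one vertex to $N(\cP)$; each three-step-ladder corner accounts for three cells and contributes one vertex to $M(\cP)$ together with two to $N(\cP)$; every remaining cell, lying in the interior of a maximal block, corresponds bijectively to an interior lattice vertex of the long outer edge of that block, which lies in $N(\cP)$. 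Summing these local contributions over the cycle gives exactly $|\cP|$, as required.

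\textbf{Main obstacle.} The core difficulty is the combinatorial identity of Step~3: one must verify that no vertex is double-counted at the interface between two consecutive junctions and that the local count at each junction type exactly balances the cells it consumes. The absence of $L$-configurations and of ladders of more than two steps, which is guaranteed by the presence of zig-zag walks (see \cite[Section 6]{Cisto_Navarra_closed_path}), is what makes this case analysis close up cleanly; without these restrictions a junction could overlap a neighbor and spoil the bijection between cells and elements of $N(\cP)\sqcup M(\cP)\sqcup R(\cP)$.
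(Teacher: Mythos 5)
Your proposal is correct and follows essentially the same route as the paper: primality via the observation that the generators of $\pp_2$ have pairwise disjoint supports, giving a tensor product of domains, and the height via a bijective count matching each cell of $\cP$ to exactly one generator (a necklace variable, an $M(\cP)$ variable for the three-step-ladder junction cell, or an $R(\cP)$ binomial for the L-junction cell). The only cosmetic difference is that the paper organizes the cell count globally, observing that the cells of each zig-zag interval $I_i$ are in bijection with the vertices of $]v_i,v_{i+1}]$, whereas you tally local contributions junction by junction; both reduce to the same identity $|N(\cP)|+|M(\cP)|+|R(\cP)|=|\cP|$.
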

   
\begin{proof}
    The minimal generating set of $\pp_2$ can be partitioned between disjoint set of variables. 
    So that $\pp_2$ admits a tensor product decomposition, where one ideal is generated by variables and others are principal prime ideals.
    Hence, $\pp_2$ itself is a prime ideal.

    Under the labelling of Figure~\ref{fig:threestepladder}, the variables $x_m$ corresponds to the cell $X$. 
    \noindent Under the labelling of Figure~\ref{fig:Lshape}, the generator $x_ax_b-x_cx_d$ corresponds to the cell $X$.
    So it suffices to show that the number of remaining cells is equal to the cardinality of $N(\MP)$.
    Let $\MW : I_1,\ldots,I_l$ be a zig-zag walk of $\MP$. 
    For $1\leq i \leq l$, the number of cells in $I_i$ are equal to the number of vertices in $]v_i,v_{i+1}]$. 
    Therefore, the  number of remaining cells is equal to the cardinality of $N(P)$.
    Thus, height of $\pp_2$ is  equal to number of cells of $\MP$
\end{proof}


In the following proposition, we show that if a polyomino $\MP$ becomes simple after removing a cell, then $\mathrm{height}(I_\mathcal{P}) = |\mathcal{P}|$. This gives a different proof of~\cite[Corollary 3.5]{konig_type}. 
As a consequence the ideal $\mathfrak{p}_2$ is a minimal prime of $I_\mathcal{P}$. 


\begin{proposition}\label{prop:I_Pheight}
Let $\cP$ be a polyomino. If there exists a cell $A$ in $\MP$ such that $\cP\setminus \{A\}$ is simple, then $\mathrm{height}(I_\cP)=|\cP|$.
In particular, it holds for closed path polyominoes.
\label{height-P}
\end{proposition}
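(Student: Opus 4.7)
The plan is to establish both inequalities $\mathrm{height}(I_\cP)\leq|\cP|$ and $\mathrm{height}(I_\cP)\geq|\cP|$. The upper bound is a general fact about polyocollections: inspecting the proof of Theorem~\ref{thm:lc_prime}, the quotient $S_\cP/L_\cP$ is isomorphic to the image of $\psi$, a $K$-subalgebra of $K[y_b^{\pm 1}:b\in F(\cP)]$ that contains the polynomial ring $K[y_b:b\in F(\cP)]$, and therefore has Krull dimension $|F(\cP)|$. Combining with Lemma~\ref{basis}, $\mathrm{height}(L_\cP)=|V(\cP)|-|F(\cP)|=|\cP|$, and since $I_\cP\subseteq L_\cP$ by Lemma~\ref{lemma-Romeo}, we conclude $\mathrm{height}(I_\cP)\leq|\cP|$.

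For the lower bound, set $\cP'\eqdef\cP\setminus\{A\}$. Since $\cP'$ is simple, by~\cite{coordinate} $I_{\cP'}\subseteq S_{\cP'}$ is prime of height $|\cP|-1$, and because $S_\cP$ is a polynomial extension of $S_{\cP'}$, the extended ideal $I_{\cP'}S_\cP$ is also prime of height $|\cP|-1$ in $S_\cP$; moreover $I_{\cP'}S_\cP\subseteq I_\cP$. If this inclusion is strict, then every minimal prime $\mathfrak{p}$ of $I_\cP$ strictly contains the prime ideal $I_{\cP'}S_\cP$, forcing $\mathrm{height}(\mathfrak{p})\geq|\cP|$. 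So the lower bound reduces to showing $f_A\in I_\cP\setminus I_{\cP'}S_\cP$.

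Let $v_A$ be the lower-left corner of $A$; since $A\notin\cP'$, one has $v_A\in F(\cP')$. Denote the other three corners of $A$ by $w_A,c_A,d_A$. I consider two cases. \emph{(i)} If some corner of $A$ lies in $V(\cP)\setminus V(\cP')$, then the monomials $x_{v_A}x_{w_A}$ and $x_{c_A}x_{d_A}$ of $f_A$ have distinct new-variable parts (the four corners being distinct). Writing $S_\cP$ as a free $S_{\cP'}$-module on new-variable monomials, the $S_{\cP'}$-coefficients of $f_A$ in this decomposition are non-zero monomials, which cannot lie in the prime ideal $I_{\cP'}=L_{\cP'}$ (which contains no variables and hence no monomials). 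Therefore $f_A\notin I_{\cP'}S_\cP$. \emph{(ii)} If all four corners of $A$ lie in $V(\cP')$, then $f_A\in S_{\cP'}$, and by flatness of the polynomial extension $f_A\in I_{\cP'}S_\cP$ iff $f_A\in I_{\cP'}=L_{\cP'}$ iff $\mathbf{v}_A\in\Lambda_{\cP'}$. Express $\mathbf{v}_A=\mathbf{v}_{v_A}+\mathbf{v}_{w_A}-\mathbf{v}_{c_A}-\mathbf{v}_{d_A}$ uniquely in the basis $\{\mathbf{v}_C:C\in\cP'\}\cup\{\mathbf{v}_b:b\in F(\cP')\}$ of Lemma~\ref{basis} as $\sum\alpha_C\mathbf{v}_C+\sum\beta_b\mathbf{v}_b$; membership in $\Lambda_{\cP'}$ requires $\beta_b=0$ for every $b\in F(\cP')$. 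The key observation is that each of $w_A,c_A,d_A$ lies strictly northeast of $v_A$ in the coordinate-wise partial order, and the recursive rewriting $\mathbf{v}_a=\mathbf{v}_{C_a}-\mathbf{v}_{b'}+\mathbf{v}_{e_1}+\mathbf{v}_{e_2}$ for $a\notin F(\cP')$ (where $C_a$ is the unique cell of $\cP'$ with lower-left $a$, and $b',e_1,e_2$ are its other corners) introduces only vertices strictly northeast of $a$. Hence the expansions of $\mathbf{v}_{w_A},\mathbf{v}_{c_A},\mathbf{v}_{d_A}$ never produce the basis vector $\mathbf{v}_{v_A}$, leaving $\beta_{v_A}=1\neq 0$; thus $\mathbf{v}_A\notin\Lambda_{\cP'}$ and $f_A\notin I_{\cP'}$.

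The ``in particular'' statement follows because removing any cell from a closed path polyomino yields an edge-connected sequence of cells with no holes, hence a simple polyomino. The main subtlety is case \emph{(ii)}: making precise the northeast-monotonicity of the basis recursion so as to isolate the contribution $\beta_{v_A}=1$ and conclude non-membership in $\Lambda_{\cP'}$.
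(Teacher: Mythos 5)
Your argument for the main implication is correct and follows the same route as the paper: the upper bound $\mathrm{height}(I_\cP)\le|\cP|$ (which the paper simply cites from the literature, while you rederive it from Lemma~\ref{basis} and the map $\psi$ of Theorem~\ref{thm:lc_prime} -- both are fine), and the lower bound obtained by chaining the prime $I_{\cP'}S_\cP$ of height $|\cP|-1$ strictly inside any minimal prime of $I_\cP$. In fact you are more careful than the paper on one point: the paper asserts $I_{\cP'}\subsetneq I_\cP$ without justification, whereas your two-case analysis of $f_A\notin I_{\cP'}S_\cP$ (separating the new variables, and in the all-old-variables case using the unique expansion of $\mathbf{v}_A$ in the basis of Lemma~\ref{basis} together with the northeast-monotone rewriting to isolate $\beta_{v_A}=1$) is a genuine and correct verification of that strictness. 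The reduction ``$f_A\in L_{\cP'}$ iff $\mathbf{v}_A\in\Lambda_{\cP'}$'' is also legitimate, since $L_{\cP'}=\ker\psi'$ and the $\supseteq$ direction of the proof of Theorem~\ref{thm:lc_prime} shows exactly that equality of the $\mu_b$-exponents forces the difference of exponent vectors into the lattice.

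The one genuine error is in your justification of the ``in particular'' clause. It is \emph{not} true that removing an arbitrary cell from a closed path yields a simple collection. Take the $3\times 3$ square annulus (eight cells around one hole cell $H$) and remove a corner cell $A$: the four edge-neighbours of $H$ are the four mid-edge cells of the annulus, all of which survive, so $H$ is still a hole of $\cP\setminus\{A\}$ (it is only diagonally adjacent to the removed cell, and cell-connectivity requires shared edges). More generally, removing a cell at which the path turns leaves both of that cell's free neighbours on the outer side of the turn, so the interior region stays trapped. What one actually needs -- and what the paper invokes via \cite[Lemma 3.3]{Cisto_Navarra_closed_path} -- is that every closed path contains a maximal block of rank at least three; removing a non-extremal cell of such a block exposes one interior and one exterior neighbour and hence destroys the hole. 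With that substitution your proof is complete.
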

\begin{proof}
    It is known by \cite[Theorem 3.1]{height} that $\mathrm{height}(I_\cP)\leq |\cP|$ for every collection of cells $\cP$. Let $\cP'=\cP \setminus \{A\}$. Since $\cP'$ is simple, $I_{\cP'}$ is a prime ideal (see \cite{QSS}). 
    Also, by \cite[Theorem\ 2.1]{coordinate} and \cite[Corollary 2.3]{balanced}, we have that $\mathrm{height}(I_{\cP'})=|\cP'|=|\cP|-1$. We show that $\mathrm{height}(I_\mathcal{P'}) < \mathrm{height}(I_\mathcal{P})$. We have that $I_\mathcal{P'}\subsetneq I_\mathcal{P}$ and $I_\mathcal{P'}$ is prime. Let $\mathfrak{p}$ be a prime ideal such that $I_\mathcal{P}\subseteq \mathfrak{p}$. In particular $I_\mathcal{P'}\subsetneq \mathfrak{p}$. Let $\mathfrak{p}_0 \subseteq \cdots \subseteq \mathfrak{p}_n=I_\mathcal{P'}$ be a chain of prime ideals contained in $I_\mathcal{P'}$ of length $n=\mathrm{height}(I_\mathcal{P'})$. Then $\mathfrak{p}_0 \subseteq \cdots \subseteq \mathfrak{p}_n=I_\mathcal{P'}\subsetneq \mathfrak{p}$ is a chain of prime ideals contained in $\mathfrak{p}$, in particular we have $\mathrm{height}(\mathfrak{p})\geq n+1$. Since $\mathrm{height}(I_\mathcal{P}):=\min \{\mathrm{height}(\mathfrak{q})\mid \mathfrak{q}\supseteq I_\mathcal{P}\ \mbox{and}\ \mathfrak{q}\ \mbox{is prime}\}$, we obtain $\mathrm{height}(I_\mathcal{P})\geq n+1$, so $\mathrm{height}(I_\mathcal{P'}) < \mathrm{height}(I_\mathcal{P})$ as claimed. Therefore $|\cP|-1=\mathrm{height}(I_{\cP'})<\mathrm{height}(I_\cP)\leq |\cP|$. So the only possibility is $\mathrm{height}(I_\cP)=|\cP|$.

\noindent For the particular case, from \cite[Lemma 3.3]{Cisto_Navarra_closed_path} we know that every closed path polyomino $\cP$ contains a block of rank at least three, so there exists a cell $A$ in $\cP$ such that $\cP \setminus \{A\}$ is simple.
\end{proof}
   
\subsection{Primality of $\mathfrak{p}_1$ and its height}

	
%

\noindent Let $\cP$ be a polyomino. A binomial $f=f^+-f^-$ in a binomial ideal $J\subset S_\cP$ is called \textit{redundant} if it can be expressed as a linear combination of binomials in $J$ of lower degree. A binomial is called \textit{irredundant} if it is not redundant. Moreover, we denote by $V^{+}_f$ the set of the vertices $v$, such that $x_v$ appears in $f^+$, and by $V^{-}_f$ the set of the vertices $v$, such that $x_v$ appears in $f^-$.

	\begin{Lemma}
	Let $\cP$ be a polyomino and $\phi: S_\cP\rightarrow T$ a ring homomorphism with $T$ an integral domain. Let $J=\ker \phi$ and $f=f^+-f^-$ be a binomial in $J$ with $\deg f\geq3$. Suppose that $I_\cP \subseteq J$ and $\phi(x_v)\neq 0$ for all $v\in V(\cP)$. 
	 \begin{enumerate}
	\item If there exist three vertices $p,q\in V^+_{f}$ and $r\in V^-_{f}$ such that $p,q$ are diagonal (respectively  anti-diagonal) corners of an inner interval and $r$ is one of the anti-diagonal (respectively diagonal) corners of the same inner interval, then $f$ is redundant in $J$. The same claim also holds if $p,q\in V^-_f$ and $r\in V^+_f$.
	\item Suppose that $\cP$ has a subpolyomino having a collection of cells as in Figure~\ref{3points}, and that $a,e_1\in V_f^+$ (respectively $V_f^-$), $d\in V_f^-$ (respectively $V_f^+$) and $e_i,m\in V_f^-$ (respectively $V_f^+$) for some $i\in \{2,\ldots,n\}$. Then $f$ is redundant. 
	\begin{figure}[h!]
		\centering
		\includegraphics[scale=0.7]{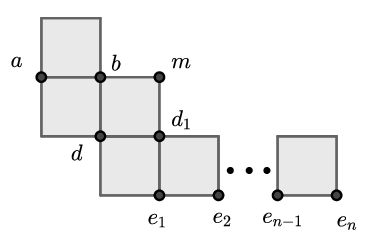}
		\caption{Configuration examined in Lemma~\ref{Lemma shikama1}.}
		\label{3points}
	\end{figure}
	 \end{enumerate}
	 \label{Lemma shikama1}
    \end{Lemma}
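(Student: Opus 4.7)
The strategy in both parts is the same: apply the binomial $f_I=x_px_q-x_rx_s\in I_\cP\subseteq J$ of a well-chosen inner interval $I$ as a rewriting rule to replace a factor of $f^+$ (or $f^-$), and then exploit the hypothesis that $T$ is a domain with $\phi(x_v)\neq 0$ for every $v\in V(\cP)$ to cancel a single variable, producing a strictly lower-degree binomial of $J$.

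For part (1), up to the obvious symmetry, assume $p,q$ are the diagonal corners and $r,s$ the anti-diagonal corners of the inner interval. Since $x_px_q$ divides $f^+$ and $x_r$ divides $f^-$, we can write $f^+=x_px_q M_1$ and $f^-=x_r M_2$ for monomials $M_1,M_2\in S_\cP$. A direct manipulation gives the identity
\[
 f \;=\; M_1\, f_I \;+\; x_r\bigl(x_s M_1 - M_2\bigr).
\]
Both $f$ and $M_1 f_I$ lie in $J$, so $x_r(x_s M_1-M_2)\in J$; applying $\phi$ and using that $T$ is a domain with $\phi(x_r)\neq 0$ forces $h\mathrel{\mathop:}=x_sM_1-M_2\in J$. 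Since $\deg h=\deg f-1$ and $\deg f_I=2<\deg f$, the displayed identity realises $f$ as an $S_\cP$-linear combination of the binomials $f_I,h\in J$ of strictly lower degree, which is the required redundancy. The case $p,q\in V_f^-$, $r\in V_f^+$ follows by interchanging the roles of $f^+$ and $f^-$ in this argument.

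For part (2), the plan is to iterate the reduction of (1) along the chain of inner intervals given by the configuration of Figure~\ref{3points}. A single application of (1) to the first inner interval, whose three relevant corners are $a,e_1$ and $d$, lowers the degree of $f$ by one and inserts the fourth corner of that interval into $f^+$; this fourth corner is precisely the vertex shared with the next interval in the chain, so the reduction can be re-applied using the inner interval involving $e_1,e_2$ and the opposite-row vertex, then the one involving $e_2,e_3$, and so on. After at most $i$ such steps one reaches the interval supported on $e_i,m$ and concludes with one last application of (1). Summing the identities obtained at each step yields an expression of $f$ as an $S_\cP$-linear combination of binomials of $J$ of degree strictly less than $\deg f$, proving redundancy. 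The main obstacle is to ensure that each intermediate residue binomial produced along the chain is itself in $J$: this is exactly what the hypothesis $\phi(x_v)\neq 0$ for all $v\in V(\cP)$ provides, allowing the domain-theoretic cancellation of a single variable to be carried out at every stage.
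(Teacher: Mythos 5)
Your argument for part (1) is correct and is essentially the standard reduction (the paper simply cites \cite[Lemma 2.2]{Cisto_Navarra_closed_path} for this part): writing $f^{+}=x_px_qM_1$, $f^{-}=x_rM_2$, the identity $f=M_1f_I+x_r(x_sM_1-M_2)$ together with the domain hypothesis and $\phi(x_r)\neq 0$ forces $x_sM_1-M_2\in J$, and both summands have lower degree.

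Part (2), however, has a genuine gap. Your plan starts by ``applying (1) to the first inner interval, whose three relevant corners are $a,e_1$ and $d$.'' In the configuration of Figure~\ref{3points} the vertices $a$, $e_1$ and $d$ are \emph{not} corners of a common inner interval --- if they were, part (2) would follow instantly from part (1) and the extra hypotheses $e_i,m\in V_f^-$ would be superfluous; the whole point of stating (2) separately is that $a,e_1\in V_f^+$ and $d\in V_f^-$ sit on different cells of a staircase-type configuration with no inner interval containing all three. Your first step therefore cannot be carried out, and the subsequent chain of reductions never gets started; note also that your argument never uses the hypothesis $m\in V_f^-$, which is essential. The paper's proof instead exploits $d,m\in V_f^-$ to rewrite $f^-$ modulo the inner $2$-minor $x_dx_m-x_bx_{d_1}\in I_\cP$ of the cell with vertices $d,m,b,d_1$: setting $\tilde f=f^+-\frac{f^-}{x_dx_m}x_bx_{d_1}$, one has $f-\tilde f\in I_\cP\subseteq J$, so $\tilde f\in J$ has the same degree as $f$ but now $d_1,e_i\in V_{\tilde f}^-$ while $e_1\in V_{\tilde f}^+$; since $d_1$ and $e_i$ \emph{are} anti-diagonal corners of an inner interval having $e_1$ as a diagonal corner, a single application of part (1) shows $\tilde f$ is redundant, and hence so is $f$. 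So the correct mechanism is one degree-preserving rewriting step that creates a triple to which (1) applies, not an iteration of (1) along a chain; as written, your iteration also conflates ``$f$ is redundant'' with ``$f$ can be replaced by a single binomial of degree one less,'' which would require separate justification at each stage.
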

        
\begin{proof}
 For the proof of $(1)$ see \cite[Lemma 2.2]{Cisto_Navarra_closed_path}. We prove $(2)$. 
Observe that: $f=f^+ -f^-=f^+  -\frac{f^-}{x_d x_m}x_b x_{d_1} - \frac{f^-}{x_d x_m}(x_d x_m-x_b x_{d_1})$ and $\frac{f^-}{x_d x_m}(x_d x_m-x_b x_{d_1})\in I_\cP\subseteq J$. Denote $\tilde{f}=f^+  -\frac{f^-}{x_d x_m}x_b x_{d_1}$ and observe that $\tilde{f}\in \ker \phi=J$. Moreover $e_1 \in V_f^+$ and $x_{d_1},x_{e_i}\in V_{\tilde{f}}^-$, since $\tilde{f}^-= \frac{f^-}{x_d x_m}x_b x_{d_1}$. So $\tilde{f}$ is redundant in $J$ by Lemma~\ref{Lemma shikama1}(1) and in particular also $f$ is redundant in $J$. The same claim holds if we suppose $a,e_1\in V_f^-$, $d\in V_f^+$ and $e_i\in V_f^+$, using the same argument.
\end{proof} 

\noindent Let $\cP$ be a closed path having a zig-zag walk and consider a sub-polyomino of $\cP$ having the shape as in Figure~\ref{fig:threestepladder}, up to reflections and rotations.
	\noindent Let $\{V_i\}_{i\in I}$ be the sets of the maximal vertical edge intervals of $\cP$ and $\{H_j\}_{j\in J}$ be the set of the maximal horizontal edge intervals of $\cP$. Let $\{v_i\}_{i\in I}$ and $\{h_j\}_{j\in J}$ be the set of the variables associated respectively to $\{V_i\}_{i\in I}$ and $\{H_j\}_{j\in J}$. Let $w$ be another variable different from $v_i$ and $h_j$, for $i\in I$ and $j\in J$.  
	Under the notation of Figure \ref{fig:threestepladder}, we define the following map:
\begin{align*}
	\alpha: V(\cP)&\longrightarrow K[\{v_i,h_j,w\}:i\in I,j\in J]\\
	r&\longmapsto  v_ih_jw^k
	\end{align*}
with $r\in V_i\cap H_j$, $k=0$ if $r\notin \{a,b,c,d,e\}$, and $k=1$, if $r\in \{a,b,c,d,e\}$. \\
Denote $T_{\cP}:=K[\alpha(r):r\in V(\cP)]$, that is a toric ring. We consider the following surjective ring homomorphism
	\begin{align*}
	\psi: S_\cP &\longrightarrow T_{\cP}\\
	\psi(x_r&)=\alpha(r)
	\end{align*}
    Let us denote the kernel of $\psi$ by $J_{\cP}$. Observe that the ideal $J_\cP$ can be viewed as the toric ideal defined in \cite[Section 3]{MRR20primalityOfPolyominoes}, in particular we can deduce that $I_\cP=(J_\cP)_2$.

\begin{remark} \rm
	
Let $f$ be an irredundant binomial in $J_\cP$ and $a\in V_f^+$ (resp. $V_f^-$). Denote by $H_a$ and $V_a$ respectively the maximal horizontal and vertical edge intervals of $\cP$ containing $a$. Then there exist $a_1\in H_a$ and $a_2\in V_a$ such that $a_1,a_2\in V_f^-$ (resp. $V_f^+$). Moreover if $\mathcal{I}$ is a maximal edge interval of $\cP$, then $|V_f^+ \cup \mathcal{I}|=|V_f^- \cup \mathcal{I}|$ otherwise $f\notin \ker \psi$, and $V_f^+ \cap V_f^-=\emptyset$ otherwise $f$ is redundant.
\end{remark}

\begin{Lemma}
Let $\cP$ be a closed path polyomino and $f\in J_P$ an irredundant binomial with $\deg f \geq 3$. Consider a subpolyomino having a configuration as in Figure~\ref{Fig:Lemma1}.

\begin{figure}[h!]
\centering
\includegraphics[scale=0.8]{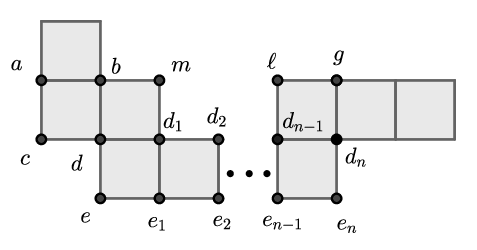}
\caption{Configuration examined in Lemma~\ref{Lemma1}.}
\label{Fig:Lemma1}
\end{figure}

\noindent Suppose that $c\in V_f^{+}, e\in V_{f}^-$. Then the following hold:
\begin{enumerate}
\item $e_i,d_i \notin V_{f}^+\cup V_{f}^-$ for all $i\in \{1,\ldots,n-2\}$.
\item Exactly one of the following occurs:
\begin{itemize}
\item[a)] $e_{n-1}\in V_{f}^+$, $\ell \in V_{f}^-$ and $e_n \notin V_{f}^+\cup V_{f}^-$.
\item[b)] $e_{n}\in V_{f}^+$, $g \in V_{f}^-$ and $e_{n-1} \notin V_{f}^+\cup V_{f}^-$.
\end{itemize}
\item $d_{n-1},d_{n} \notin V_{f}^-$.
\end{enumerate} 
The same result holds replacing $V_{f}^+$ with $V_{f}^-$ and $V_{f}^-$ with $V_{f}^+$ in all previous occurrences.
\label{Lemma1}
\end{Lemma}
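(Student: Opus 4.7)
The plan is to exploit the two balance constraints every irredundant binomial $f\in J_\cP$ must satisfy (as recorded in the remark preceding the lemma): on each maximal edge interval $\mathcal{I}$ one has $|V_f^+\cap \mathcal{I}|=|V_f^-\cap \mathcal{I}|$, and $V_f^+\cap V_f^-=\emptyset$. Combined with Lemma~\ref{Lemma shikama1}(1)---which forbids an inner interval from having two diagonal corners in $V_f^+$ and an anti-diagonal corner in $V_f^-$ (or vice versa)---these drastically restrict which vertices of the ladder in Figure~\ref{Fig:Lemma1} can belong to $V_f^\pm$. The base assumptions $c\in V_f^+$, $e\in V_f^-$ will then be propagated rung by rung along the ladder.

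For part (1), I would proceed by induction on $i$. At the first rung, the inner interval with diagonal corners $c, d_1$ has anti-diagonal corners $e, e_1$. If $e_1\in V_f^+$, then $\{c,e_1\}\subset V_f^+$ and $e\in V_f^-$ make $f$ redundant by Lemma~\ref{Lemma shikama1}(1); if instead $e_1\in V_f^-$, then $\{e,e_1\}\subset V_f^-$ together with $c\in V_f^+$ gives the same contradiction. An analogous argument on the twin inner interval obtained by transposing the horizontal and vertical roles at the rung rules out $d_1\in V_f^+\cup V_f^-$. Having neutralized $e_1, d_1$, the balance condition on the maximal edge intervals through them is automatically preserved, so the same argument applies with $(c,e)$ replaced by the corresponding pair at the second rung, yielding the inductive step up to $i=n-2$.

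For parts (2) and (3), the ladder bends at the $(n{-}1)$-th rung and offers two possible terminations via $\ell$ or $g$. The balance condition on the short edge interval shared by $e_{n-1}$ and $e_n$, combined with the fact from (1) that all intermediate $e_i, d_i$ are neutral, forces exactly one of $e_{n-1}, e_n$ into $V_f^+$ with its partner $\ell$ or $g$ in $V_f^-$; the remaining one of $e_{n-1}, e_n$ is excluded from $V_f^+\cup V_f^-$ by another application of Lemma~\ref{Lemma shikama1}(1), establishing the mutually exclusive cases (a) and (b). Part (3) is then immediate: placing $d_{n-1}$ or $d_n$ in $V_f^-$ would combine with the active $V_f^+$ vertex at the bend and the anti-diagonal corner $\ell$ or $g$ in $V_f^-$ to produce yet another forbidden diagonal/anti-diagonal triple via Lemma~\ref{Lemma shikama1}(1). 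The symmetric statement (swapping the roles of $V_f^+$ and $V_f^-$) follows identically by the symmetry in the hypotheses of Lemma~\ref{Lemma shikama1}(1).

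The main obstacle will be the bookkeeping at the bend: I must select precisely the correct inner interval at each invocation of Lemma~\ref{Lemma shikama1}(1) and rule out the possibility that the sign imbalance imported from $c\in V_f^+$ is absorbed elsewhere in $\cP$ without activating one of the two subcases of part (2). The closed-path structure together with Discussion~\ref{dis:zig-zag}---which constrains how two maximal inner intervals can meet, ruling out $L$-configurations---should guarantee there is no such escape route along the ladder, but making this explicit will require a careful enumeration of the vertices lying on each maximal edge interval of $\cP$ that meets Figure~\ref{Fig:Lemma1}.
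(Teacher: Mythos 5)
Your toolkit is the right one --- the balance/companion conditions on maximal edge intervals from the preceding remark together with Lemma~\ref{Lemma shikama1}(1) are exactly what the paper uses --- but two of your concrete steps fail. First, the base case: there is no inner interval of $\cP$ with corners $c,d_1,e,e_1$. In the configuration of Figure~\ref{Fig:Lemma1} the vertex $c$ sits around the bend (as in Figure~\ref{fig:threestepladder}, where $c=(0,1)$, $e=(1,0)$, $e_1=(2,0)$, $d_1=(2,1)$): $c$ and $d_1$ are collinear horizontally, and the cell that would be needed to put $c$ and $e$ (or $e_1$) on a common inner interval lies outside $\cP$. So Lemma~\ref{Lemma shikama1}(1) cannot be applied to the triple $c,e_1,e$. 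The correct exclusion of $e_1\in V_f^+$ is anchored at $e$ alone: the maximal vertical edge interval of $e_1$ is $\{e_1,d_1,m\}$, so $e_1\in V_f^+$ forces $d_1\in V_f^-$ or $m\in V_f^-$, and the inner intervals $[e,d_1]$ and $[e,m]$ then give the forbidden triples $(e,e_1,d_1)$ and $(e,e_1,m)$. The hypothesis $c\in V_f^+$ is not used there at all; it enters only when one rules out $e_1\in V_f^-$, to kill the sub-case $m\in V_f^+$ via the inner interval with corners $c,m,a$.

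Second, the inductive mechanism "replace $(c,e)$ by the corresponding pair at the second rung" has nothing to propagate: part~(1) asserts precisely that the interior rungs carry no vertices of $V_f^+\cup V_f^-$, so after the first rung there is no signed pair to hand forward. The argument that works is global, not rung-by-rung: for $2\le i\le n-2$ the pair $\{e_i,d_i\}$ is a complete maximal vertical edge interval, so the balance condition gives $e_i\in V_f^+\Leftrightarrow d_i\in V_f^-$, and the long inner interval $[e,d_i]$ with the fixed anchor $e\in V_f^-$ rules this out; symmetrically, the $V_f^+$-companion of $e$ on the bottom maximal horizontal edge interval must (after the exclusions) be $e_{n-1}$ or $e_n$, and that vertex $X$ serves as the second anchor ruling out $e_i\in V_f^-$, $d_i\in V_f^+$. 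This also explains why the argument stops at $i=n-2$: the vertical edge intervals of $e_{n-1}$ and $e_n$ acquire a third vertex ($\ell$, resp.\ $g$) at the bend, so the equivalence $e_i\in V_f^+\Leftrightarrow d_i\in V_f^-$ breaks there, and the counting $|V_f^+\cap\mathcal I|=|V_f^-\cap\mathcal I|$ on the bottom edge interval is what forces exactly one of $e_{n-1},e_n$ into $V_f^+$ in part~(2). Your sketches of (2) and (3) point at the right inequalities but would need these corrected triples (e.g.\ $d_{n-1}\notin V_f^-$ comes from $(e_{n-1},d_{n-1},e)$, not from $\ell$ or $g$).
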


\begin{proof}
 $(1).$ Firstly, for $i\in \{2,\ldots,n-2\}$ observe that $e_i\in V_f^+$ if and only if $d_i\in V_f^-$. In particular, if $e_i\in V_f^+$ and $d_i\in V_f^-$ we obtain a contradiction with Lemma~\ref{Lemma shikama1}(1) considering $e, e_i, d_i$. So, $e_i\notin V_f^+$ and $d_i\notin V_f^-$ for all $i\in \{2,\ldots,n-2\}$. Moreover, if $e_1\in V_f^+$ then, considering its maximal vertical edge interval, we have $d_1\in V_f^-$ or $m\in V_f^-$, obtaining in each case a contradiction with Lemma~\ref{Lemma shikama1}(1) considering respectively $e,e_1,d_1$ and $e,e_1,m$. So $e_1\notin V_f^+$. Since $e \in V_f^-$, in its maximal horizontal edge interval there exists a vertex $X\in V_f^+$, and by the above arguments the only possibilities is $X=e_{n-1}$ or $X=e_n$. In both cases observe that, for $i\in \{2,\ldots,n-1\}$, $e_i\in V_f^-$ if and only if $d_i\in V_f^+$, obtaining that $e_i\notin V_f^-$ and $d_i\notin V_f^+$ for all $i\in \{3,\ldots,n-1\}$, otherwise we have a contradiction with Lemma~\ref{Lemma shikama1}(1) considering $X, e_i, d_i$. Suppose $e_1\in V_f^-$. Then we have necessarily $e_{n-1},e_n\in V_f^+$ and, considering its vertical edge interval, the following two possibilities occur: either $d_1\in V_f^+$ or $m\in V_f^+$. If $d_1\in V_f^+$, then we have a contradiction by Lemma \ref{Lemma shikama1}(1) considering $e_1,d_1,e_{n-1}$. If $m\in V_f^+$, then $a\in V_f^-$ or $b\in V_f^-$. Both cases lead to a contradiction by Lemma \ref{Lemma shikama1}(1) considering respectively $c,m,a$ and $b,m,e_1$. For what concern the vertex $d_1$, we obtain $d_1\notin V_f^+ \cap V_f^-$ by similar arguments. \\
 $(2)-(3).$ We showed before that either $e_{n-1}\in V_f^+$ or $e_n\in V_f^+$. Suppose that $e_{n-1}\in V_f^+$. If $d_{n-1}\in V_f^-$ we obtain a contradiction of Lemma~\ref{Lemma shikama1}(1) considering $e_{n-1},d_{n-1},e$. So $d_{n-1}\notin V_f^-$ and considering the same vertical edge interval we obtain $\ell\in V_f^-$. If $d_n\in V_f^-$ then, considering its vertical edge interval, we have $g\in V_f^+$ or $e_n\in V_f^+$, obtaining in each case a contradiction with Lemma~\ref{Lemma shikama1}(1) considering respectively $g,\ell,e_{n-1}$ and $d_n,e_n,e$. So $d_n\notin V_f^-$. Moreover $e_n\notin V_f^-$, otherwise we have the same contradiction considering the vertices $e_n,e_{n-1},\ell$. Finally if $e_n \in V_f^+$ then, denoting with $\mathcal{I}$ the horizontal edge interval of $e$, we have $|V_f^+ \cap \mathcal{I}|=2$ and $|V_f^- \cap \mathcal{I}|=1$, that is a contradiction. So, $e_n \notin V_f^+$ and we proved points (3) and the case a) of (2). If we assume $e_{n}\in V_f^+$, with the same argument we can prove again points (3) and the case b) of (2). By our proof, it is also easy to argue that the same result holds exchanging $V_{f}^+$ with $V_{f}^-$ and $V_{f}^-$ with $V_{f}^+$.
\end{proof}

\begin{Lemma}
Let $\cP$ be a closed path polyomino and $f\in J_P$ an irredundant binomial with $\deg f \geq 3$. Consider a subpolyomino having a configuration as in Figure~\ref{Fig:Lemma2}.\\
\begin{figure}[h!]
\centering
\includegraphics[scale=0.7]{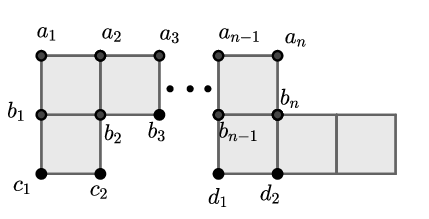}
\caption{Configuration examined in Lemma~\ref{Lemma2}.}
\label{Fig:Lemma2}
\end{figure}

\noindent Suppose that $b_1,b_2\notin V_{f}^-$ and that one of the following occurs:
\begin{itemize}
\item[a1)] $a_{1}\in V_{f}^-$, $c_1 \in V_{f}^+$ and $c_2\notin V_f^+$.
\item[a2)] $a_{2}\in V_{f}^-$, $c_2 \in V_{f}^+$ and $c_1\notin V_f^+$.
\end{itemize}
Then the following hold:
\begin{enumerate}
\item $a_i,b_i \notin V_{f}^+\cup V_{f}^-$ for all $i\in \{3,\ldots,n-2\}$.
\item Exactly one of the following occurs:
\begin{itemize}
\item[a)] $a_{n-1}\in V_{f}^+$, $d_1 \in V_{f}^-$ and $a_n \notin V_{f}^+\cup V_{f}^-$.
\item[b)] $a_{n}\in V_{f}^+$, $d_2 \in V_{f}^-$ and $a_{n-1} \notin V_{f}^+\cup V_{f}^-$.
\end{itemize}
\item $b_1,b_2\notin V_{f}^+$ and $b_{n-1},b_{n} \notin V_{f}^-$.
\item If a1) occurs then $a_2\notin V_{f}^+\cup  V_f^-$, while if a2) occurs then $a_1 \notin V_{f}^+\cup V_f^-$.
\end{enumerate} 
The same result holds exchanging $V_{f}^+$ with $V_{f}^-$ (and vice versa), in all occurrences above.
\label{Lemma2}
\end{Lemma}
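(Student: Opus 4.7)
The plan is to mirror the strategy of Lemma~\ref{Lemma1}, using the same two tools: Lemma~\ref{Lemma shikama1}(1) (which forbids having two $V_f^+$-vertices together with one $V_f^-$-vertex, or vice versa, as the four corners of an inner interval), and the balancing identity $|V_f^+\cap \mathcal{I}|=|V_f^-\cap \mathcal{I}|$ valid on every maximal edge interval $\mathcal{I}$ because $f\in J_\cP=\ker\psi$ and $\psi$ attaches a common factor $v_i$ (respectively $h_j$) to all vertices of a vertical (respectively horizontal) maximal edge interval. The configuration in Figure~\ref{Fig:Lemma2} is symmetric under swapping the $a_i$-row with the $b_i$-row, so cases (a1) and (a2) are interchanged under this symmetry, and I would only work out (a1) in detail.

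First I would settle (3) and (4) at the left end. Assuming $a_1\in V_f^-$ and $c_1\in V_f^+$, the standing hypothesis $b_1,b_2\notin V_f^-$ together with any of the assumptions $b_1\in V_f^+$ or $b_2\in V_f^+$ would produce an inner interval with diagonal corners among $\{c_1,b_1,b_2\}$ and an anti-diagonal $V_f^-$-corner among $\{a_1,a_2\}$, contradicting Lemma~\ref{Lemma shikama1}(1); hence $b_1,b_2\notin V_f^+\cup V_f^-$. Next, if $a_2\in V_f^+$ then the vertical edge interval through $a_2$ contains a $V_f^-$-vertex by the balancing identity, and the triple formed by $c_1$, $a_2$ and this vertex violates Lemma~\ref{Lemma shikama1}(1); if $a_2\in V_f^-$ then the horizontal edge interval through $a_1,a_2$ must contain two $V_f^+$-vertices, and a short case analysis based on Lemma~\ref{Lemma shikama1}(1) rules each possible position out. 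This gives (4).

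Then I would propagate inward to obtain (1): for each $i\in\{3,\ldots,n-2\}$, if one of $a_i,b_i$ lies in $V_f^+\cup V_f^-$, the balancing identity along the vertical edge interval of $a_i$ forces the other into the opposite set; the resulting triple together with $c_1$ (or with another vertex already constrained by the previous step) falls under Lemma~\ref{Lemma shikama1}(1), producing redundancy. Hence $a_i,b_i\notin V_f^+\cup V_f^-$ for all interior $i$.

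Finally, for (2) and the second half of (3) I would analyze the right end. The horizontal edge interval containing $a_1$ now has no $V_f^+$-vertex in $\{a_2,\ldots,a_{n-2}\}$, so the balancing identity demands a $V_f^+$-vertex in $\{a_{n-1},a_n\}$; both cannot occur simultaneously because the inner interval with corners among $\{a_{n-1},a_n,b_{n-1},b_n\}$ would again trigger Lemma~\ref{Lemma shikama1}(1). In the subcase $a_{n-1}\in V_f^+$, the vertical edge interval through $a_{n-1}$ must be balanced, and the only option compatible with Lemma~\ref{Lemma shikama1}(1) is $d_1\in V_f^-$; the same reasoning forces $b_{n-1},b_n\notin V_f^-$ and $a_n\notin V_f^+\cup V_f^-$. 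The subcase $a_n\in V_f^+$ is symmetric and produces (b). The version with $V_f^+$ and $V_f^-$ swapped follows by reading the same argument with the roles of the two sets interchanged.

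The main obstacle will be the bookkeeping at the two endpoints: one has to verify in every sub-case that the triple invoked really consists of corners of an honest inner interval of $\cP$ (which uses the specific shape of the subpolyomino of Figure~\ref{Fig:Lemma2}), and that the balancing identity is tight enough to exclude every alternative. Once this is organized along the lines of Lemma~\ref{Lemma1}, the rest is routine.
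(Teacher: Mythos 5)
Your toolkit is the right one (Lemma~\ref{Lemma shikama1}(1) together with the balancing of $V_f^+$ and $V_f^-$ along every maximal edge interval), and the overall strategy is the same as the paper's, but the order in which you propose to make the deductions cannot be carried out, and two of the inner intervals you invoke do not exist. In Figure~\ref{Fig:Lemma2} the vertices $c_1,a_1,b_1$ lie on one maximal vertical edge interval and $c_2,a_2,b_2$ on the adjacent one, so $c_1$ and $b_1$ are collinear, and the unique inner interval having $c_1,b_2$ as an (anti-)diagonal pair has $b_1,c_2$ --- not $a_1$ or $a_2$ --- as its other pair of corners ($a_1,a_2$ sit in the middle of its vertical edges). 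Hence there is no ``inner interval with diagonal corners among $\{c_1,b_1,b_2\}$ and an anti-diagonal $V_f^-$-corner among $\{a_1,a_2\}$'', and your left-end proof of $b_1,b_2\notin V_f^+$ collapses. The same goes for excluding $a_2\in V_f^-$: if $a_2\in V_f^-$ and (as balancing then forces) $b_2\in V_f^+$, no contradiction is available using only the vertices $a_1,a_2,b_1,b_2,c_1,c_2$. In the paper these facts are obtained only \emph{after} one locates the $V_f^+$-vertex that balancing forces on the horizontal edge interval of $a_1$ at $a_{n-1}$ or $a_n$; for instance $b_1\in V_f^+$ is killed by the triple $b_1,a_1,a_{n-1}$, where $\{b_1,a_{n-1}\}$ is an honest diagonal pair and $a_1$ an anti-diagonal corner of the interval they span. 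So the right end must be processed before, not after, your step (A).

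Two further steps are misstated. First, $a_{n-1}$ and $a_n$ are \emph{adjacent} corners of the interval spanned by columns $n-1$ and $n$, not a diagonal pair, so Lemma~\ref{Lemma shikama1}(1) does not exclude $a_{n-1},a_n\in V_f^+$ simultaneously; the paper rules this out by the counting argument: after showing $a_i\notin V_f^-$ for $i\geq 2$ one gets $|V_f^+\cap\mathcal{I}|=2>1=|V_f^-\cap\mathcal{I}|$ on the horizontal edge interval $\mathcal{I}$ of $a_1$, a contradiction. Second, a1) and a2) are not interchanged by swapping the $a$-row with the $b$-row (that swap would send $a_1\in V_f^-$ to $b_1\in V_f^-$, violating the standing hypothesis); they correspond to the two possible columns of the left-hand junction, and the paper handles a2) by repeating the argument, not by a symmetry of the figure. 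None of this is fatal to the method, but as written the plan has to be reorganised along the paper's order (interior columns, then $a_2\notin V_f^+$, then the forced $V_f^+$-vertex at $a_{n-1}$ or $a_n$, and only then the remaining exclusions at both ends) before the routine bookkeeping can go through.
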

\begin{proof}
Suppose a1) holds. Observe that, for $i\in \{3,\ldots,n-2\}$, $a_i\in V_f^+$ if and only if $b_i\in V_f^-$. In particular, if $a_i\in V_f^+$ and $b_i\in V_f^-$ we obtain a contradiction with Lemma~\ref{Lemma shikama1}(1) considering $a_1, a_i, b_i$. So, $a_i\notin V_f^+$ and $b_i\notin V_f^-$ for all $i\in \{3,\ldots,n-1\}$. Also $a_2\notin V_f^+$, otherwise we have a contradiction of Lemma~\ref{Lemma shikama1}(1) considering $a_1,a_2,c_1$. Therefore, considering the horizontal edge interval of $a_1$, either $a_{n-1}\in V_f^+$ or $a_n\in V_f^+$. We assume that $a_{n-1}\in V_f^+$ and we prove that we obtain our result with a) in point (2). Observe that, for $i\in \{3,\ldots,n-2\}$, $a_i\in V_f^-$ if and only if $b_i\in V_f^+$. In particular, $a_i\notin V_f^-$ and $b_i\notin V_f^+$ for all $i\in \{3,\ldots,n-1\}$, otherwise we obtain a contradiction of Lemma~\ref{Lemma shikama1}(1) considering $a_i,b_i,a_{n-1}$. We have also $a_2\notin V_f^-$, otherwise (since $c_2\notin V_f^+$) the only possibility is $b_2\in V_f^+$, obtaining the same contradiction considering $a_2,b_2,a_{n-1}$. As consequence, we obtain also $b_2\notin V_f^+$, otherwise the only possibility is $c_2\in V_f^-$, that is a contradiction of Lemma~\ref{Lemma shikama1}(1) considering $c_1,a_1,c_2$. Moreover $b_1\notin V_f^+$, otherwise we obtain the same contradiction considering $b_1,a_1,a_{n-1}$. Considering the vertical edge interval of $a_{n-1}$, then $b_{n-1}\in V_f^-$ or $d_1\in V_f^-$. In the first case we obtain the same contradiction as before, considering $a_{n-1},b_{n-1},a_1$. So we obtain $d_1\in V_f^-$. As consequence, $a_n\notin V_f^-$, otherwise we obtain the same contradiction considering $a_{n-1},d_1,a_n$. If $a_n\in V_f^+$, denoting with $\mathcal{I}$ the horizontal edge interval of $a_1$, we have $|V_f^+ \cap \mathcal{I}|=2$ and $|V_f^- \cap \mathcal{I}|=1$, that is a contradiction. So, $a_n \notin V_f^+$. Finally, $b_{n-1}\notin V_f^-$ otherwise we have the same contradiction considering $a_{n-1},b_{n-1},a_1$, and $b_n\notin V_f^+$, otherwise the only possibility is $d_2\in V_f^+$ (since $a_n\notin V_f^+$) and we have the same contradiction considering $d_2,d_1,a_{n-1}$. So we obtained our result with a) in point (2). With the same argument we obtain our result with b) in point (2) assuming that $a_{n}\in V_f^+$ (instead of $a_{n-1}\in V_f^+$).\\
If we suppose at the beginning that condition a2) holds, we obtain our result with the same argument. By our proof, it is also easy to argue that the same result holds exchanging $V_{f}^+$ with $V_{f}^-$ and vice versa.
\end{proof}

\begin{Lemma}
Let $\cP$ be a closed path polyomino and $f\in J_P$ an irredundant binomial with $\deg f \geq 3$. Consider a subpolyomino having a configuration as in Figure~\ref{Fig:Lemma3}.

\begin{figure}[h!]
\centering
\includegraphics[scale=0.7]{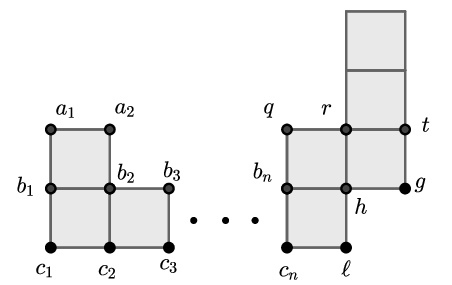}
\caption{Configuration examined in Lemma~\ref{Lemma3}.}
\label{Fig:Lemma3}
\end{figure}

\noindent Suppose that $b_1,b_2\notin V_{f}^-$ and that one of the following occurs:
\begin{itemize}
\item[a1)] $a_{1}\in V_{f}^+$, $c_1 \in V_{f}^-$ and $a_2\notin V_f^+$.
\item[a2)] $a_{2}\in V_{f}^+$, $c_2 \in V_{f}^-$ and $a_1\notin V_f^+$.
\end{itemize}
Then the following hold:
\begin{enumerate}
\item $b_i,c_i \notin V_{f}^+\cup V_{f}^-$ for all $i\in \{3,\ldots,n-1\}$.
\item If a1) occurs then $c_2\notin V_{f}^+\cup  V_f^-$, while if a2) occurs then $c_1 \notin V_{f}^+\cup V_f^-$.
\item $b_n,h\notin V_f^-$ and $b_1,b_2\notin V_f^+$.
\end{enumerate} 
Moreover, if $\cP$ has a zig-zag walk and in addition $g\in V_f^-$, then it holds that $\ell\in V_f^+$, $b_n \notin V_{f}^+$, $c_n,q,r,t\notin V_{f}^+\cup V_f^-$.\\
The same result holds replacing $V_{f}^+$ with $V_{f}^-$ and $V_{f}^-$ with $V_{f}^+$ in all previous occurrences.
\label{Lemma3}
\end{Lemma}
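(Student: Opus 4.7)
The proof strategy mirrors that of Lemmas~\ref{Lemma1} and~\ref{Lemma2}: I would assume without loss of generality that a1) holds, since the case a2) is completely symmetric under the obvious reflection of the labelling in Figure~\ref{Fig:Lemma3}, and then repeatedly invoke Lemma~\ref{Lemma shikama1}(1) together with two basic balance principles for an irredundant $f\in J_\cP$. Namely, since $f$ is irredundant one has $V_f^+\cap V_f^-=\emptyset$, and since $f\in \ker\psi$ and $\psi$ sends each vertex of a maximal edge interval $\mathcal{I}$ to a monomial with the same corresponding variable $v_i$ or $h_j$, one must have $|V_f^+\cap \mathcal{I}|=|V_f^-\cap \mathcal{I}|$. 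Thus every time a vertex is placed in $V_f^+$ or $V_f^-$, the remaining vertices of any incident maximal edge interval are severely constrained.

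For part (1), I would run the same inductive argument as in Lemma~\ref{Lemma2}: on the ladder-like portion between $c_1$ and the far end of the sub-polyomino, for each $i\in\{3,\ldots,n-1\}$ the vertex $b_i$ lies in $V_f^+$ if and only if $c_i\in V_f^-$ (the only free vertices of the corresponding vertical edge interval once the neighbours are excluded), and placing both would give a diagonal/anti-diagonal triple together with $a_1\in V_f^+$ inside an inner interval of $\cP$, contradicting Lemma~\ref{Lemma shikama1}(1). For part (2), if a1) holds then $c_2\in V_f^-$ is ruled out because $a_1,c_1,c_2$ would be the corners of an inner interval in the forbidden configuration of Lemma~\ref{Lemma shikama1}(1), while $c_2\in V_f^+$ unbalances the horizontal edge interval containing $c_1$. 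For part (3), once $a_1\in V_f^+$, $c_1\in V_f^-$ and the conclusions of (1)-(2) are in force, balance on the horizontal edge interval through $a_1$ and on the two vertical edge intervals through $b_1,b_2$ rules out $b_1,b_2\in V_f^+$ and $b_n,h\in V_f^-$; any such placement combined with one of the already-fixed vertices forms a forbidden Lemma~\ref{Lemma shikama1}(1) triple.

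For the zig-zag walk addendum, assume further that $\cP$ contains a zig-zag walk and that $g\in V_f^-$. The existence of the zig-zag walk forces the sub-polyomino extending beyond $g$ and $\ell$ to have the three-step ladder/$L$-shape geometry described in Discussion~\ref{dis:zig-zag}, so all the inner intervals needed below actually exist. Balance on the maximal edge interval containing $g$, combined with the exclusions $b_n,h\notin V_f^-$ established in (3), leaves $\ell$ as essentially the only candidate for the matching positive vertex, forcing $\ell\in V_f^+$. Once $\ell\in V_f^+$ and $g\in V_f^-$ are fixed, the claims $b_n,c_n,q,r,t\notin V_f^+\cup V_f^-$ follow in turn: any such placement produces, together with $a_1,c_1,g$, or $\ell$, a diagonal/anti-diagonal triple inside a small inner interval, contradicting Lemma~\ref{Lemma shikama1}(1), or else unbalances one of the four maximal edge intervals already partially determined.

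The main obstacle is purely bookkeeping: one has to test every possible placement of each of the labelled vertices against every small inner interval it could participate in, and the configuration in Figure~\ref{Fig:Lemma3} has more vertices than those of Lemma~\ref{Lemma1} or~\ref{Lemma2}. A secondary subtlety appears in the zig-zag case, where the vertex $g$ may sit at a junction of type Figure~\ref{fig:threestepladder}; there one has to invoke Lemma~\ref{Lemma shikama1}(2) rather than (1) when the relevant triple passes through the vertex labelled $m$ in that figure. Provided these cases are handled carefully, the proof reduces to a finite, mechanical enumeration in the same spirit as the preceding two lemmas.
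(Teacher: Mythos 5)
Your proposal follows essentially the same route as the paper's proof: assume a1), and eliminate placements vertex by vertex via repeated applications of Lemma~\ref{Lemma shikama1}, the balance condition $|V_f^+\cap\mathcal{I}|=|V_f^-\cap\mathcal{I}|$ on each maximal edge interval, and $V_f^+\cap V_f^-=\emptyset$, exactly as in Lemmas~\ref{Lemma1} and~\ref{Lemma2}. One caveat on the bookkeeping you defer: the exclusion $h\notin V_f^-$ in part (3) is not a single forbidden triple but a forced chain ($h\in V_f^-$ gives $c_n\in V_f^+$, hence $q\in V_f^-$, hence $t\in V_f^+$) that is closed off by Lemma~\ref{Lemma shikama1}(2) applied to $c_n,t\in V_f^+$ and $h,c_1\in V_f^-$ --- so part (2) of that lemma is needed inside part (3), not in the zig-zag addendum as you place it --- and in the addendum the conclusion $\ell\in V_f^+$ comes from the dichotomy ``$c_n\in V_f^+$ or $\ell\in V_f^+$'' supplied by balance on the horizontal edge interval of $c_1$ (with $c_n$ then eliminated using $g\in V_f^-$), rather than from balance on the edge interval of $g$.
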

\begin{proof}
Suppose a1) holds. Observe that, for $i\in \{3,\ldots,n-1\}$, $c_i\in V_f^+$ if and only if $b_i\in V_f^-$. In particular, if $c_i\in V_f^+$ and $b_i\in V_f^-$ we obtain a contradiction with Lemma~\ref{Lemma shikama1}(1) considering $c_1, c_i, b_i$. So, $c_i\notin V_f^+$ and $b_i\notin V_f^-$ for all $i\in \{3,\ldots,n-1\}$. Moreover $c_2\notin V_f^+$, otherwise we obtain the same contradiction considering $c_1,a_1,c_2$. It follows that, since $c_1\in V_f^-$, considering the horizontal edge interval of $c_1$ there exists a vertex $X\in V_f^+$ such that $X\in\{c_n,\ell\}$. Therefore, for $i\in \{3,\ldots,n-1\}$, observing that $c_i\in V_f^-$ if and only if $b_i\in V_f^+$, with the same argument above we obtain $c_i\notin V_f^-$ and $b_i\notin V_f^+$ for all $i\in \{3,\ldots,n-1\}$, otherwise we contradict Lemma~\ref{Lemma shikama1}(1) considering $c_i, b_i, X$. Moreover, considering the vertical edge interval of $c_2$, if $c_2\in V_f^-$ then $b_2\in V_f^+$ (since $a_2\notin V_f^+$), that is a contradiction with Lemma~\ref{Lemma shikama1}(1) considering $c_2,b_2,X$. Therefore, at this point, we proved points (1) and (2). 

Suppose $b_2\in V_f^+$, then $a_2\in V_f^-$ (since $c_2\notin V_f^-$), but this is a contradiction with Lemma~\ref{Lemma shikama1}(1) considering $a_1,a_2,b_2$. Moreover also $b_1\notin V_f^+$, otherwise we obtain the same contradiction considering $b_1,c_1,X$. Now suppose $b_n\in V_f^-$. If $c_n\in V_f^+$ we have the same contradiction considering $b_n,c_n,c_1$. Hence, the only possibility is $q\in V_f^+$ but in such case we have also $r\in V_f^-$ or $t\in V_f^-$, obtaining in each case a contradiction with Lemma~\ref{Lemma shikama1}(1). So $b_n\notin V_f^-$. Suppose $h\in V_f^-$. If $\ell\in V_f^+$ we have the same contradiction with Lemma~\ref{Lemma shikama1}(1) considering $h,\ell,c_1$. It follows that the only possibility, for $V_f^+$ in the horizontal edge interval of $c_1$, is  $c_n\in V_f^+$. To avoid the same contradiction, in the vertical edge interval of $c_n$ we have to consider $q\in V_f^-$, and as consequence in the horizontal edge interval of $q$ we have to consider $t\in V_f^+$. But in this case we contradict Lemma~\ref{Lemma shikama1}(2), considering the vertices $c_n,t\in V_f^+$ and $h,c_1\in V_f^-$. So, we obtain $h\notin V_f^-$. At this point, we proved also (3).

Suppose now $g\in V_f^-$ and $\cP$ has a zig-zag walk. By the above arguments, we showed that either $c_n\in V_f^+$ or $\ell\in V_f^+$. If $c_n\in V_f^+$, then either $b_n\in V_f^-$ or $q\in V_f^-$. In the first case we obtain a contradiction with Lemma~\ref{Lemma shikama1}(1) considering $b_n,c_n,c_1$, in the second case we have either $r\in V_f^+$ or $t\in V_f^+$, obtaining again the same contradiction considering respectively $c_n,q,r$ and $q,t,g$. So $c_n\notin V_f^+$ and $\ell\in V_f^+$. Moreover also $c_n\notin V_f^-$, otherwise either $b_n\in V_f^+$ or $q\in V_f^+$, obtaining in each case the same contradiction together with $\ell\in V_f^+$. As consequence also $b_n\notin V_f^+$, otherwise the only possibility is $q\in V_f^-$, obtaining the same contradiction considering $b_n,q,g$. At this point we have $c_n,b_n\notin V_f^+ \cup V_f^-$, and considering their vertical edge interval we obtain also $q\notin V_f^+ \cup V_f^-$. Finally if we suppose the assumption $r,t\notin V_f^+ \cup V_f^-$ is not true, at this point the only possibility is either $r\in V_f^-$ and $t\in V_f^+$, or $r\in V_f^+$ and $t\in V_f^-$. In the first case we obtain a contradiction with Lemma~\ref{Lemma shikama1}(1) considering $r,t,g$. In the second case, consider that there exists a vertex $X$ belonging to the same vertical edge interval of $t$ such that $X\in V_f^+$, and since $\cP$ has a zig-zag walk then $X,t,r$ are vertices of an inner 2-minor of $\cP$, obtaining again a contradiction with Lemma~\ref{Lemma shikama1}(1).\\
We have just proved our results in the case condition a1) holds. It is easy to understand that the same arguments hold also in case condition a2) is satisfied. By our proof, it is also easy to argue that the same result holds exchanging $V_{f}^+$ with $V_{f}^-$ and vice versa.
\end{proof}

\noindent By the following result we can state that the ideal $\pp_1$, defined in the previous section, is a prime ideal.

\begin{theorem}\label{thm:p_1prime}
Let $\cP$ be a closed path polyomino having zig-zag walks and let $Z_\cP$ be the ideal generated by the binomials $f_\cW$, for every zig-zag walk $\cW$ in $\cP$. Then $I_\cP+Z_\cP=J_\cP$.
\end{theorem}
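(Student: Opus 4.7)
The containment $I_\cP + Z_\cP \subseteq J_\cP$ is the easy direction. Indeed, $I_\cP = (J_\cP)_2 \subseteq J_\cP$ was noted when $J_\cP$ was defined, and for every zig-zag walk $\cW$ the identity recalled after Definition~\ref{def:zig-zagwalk} shows $x_{v_1} f_\cW \in I_\cP \subseteq J_\cP$; since $J_\cP = \ker \psi$ is toric, hence prime, and $\psi(x_{v_1})$ is a nonzero monomial of $T_\cP$, this forces $f_\cW \in J_\cP$.

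For the reverse inclusion, the plan is induction on the degree of a binomial $f \in J_\cP$; because $J_\cP$ is toric it is generated by binomials, so it suffices to handle binomials. The base case $\deg f = 2$ is exactly $I_\cP = (J_\cP)_2$. For $\deg f \geq 3$, if $f$ is redundant then it decomposes as a combination of binomials of $J_\cP$ of strictly smaller degree, which by the inductive hypothesis lie in $I_\cP + Z_\cP$.

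The substantial case is an irredundant $f$ of degree $\geq 3$, and the target is to show $f = \pm f_\cW$ for some zig-zag walk $\cW$, which puts $f$ into $Z_\cP$. By the remark preceding Lemma~\ref{Lemma1}, one has $V_f^+ \cap V_f^- = \emptyset$ and $|V_f^+ \cap \mathcal{I}| = |V_f^- \cap \mathcal{I}|$ on every maximal edge interval $\mathcal{I}$ of $\cP$. Because $\cP$ admits a zig-zag walk, by~\cite[Proposition~6.1]{Cisto_Navarra_closed_path} it contains no $L$-configuration, so every change of direction along the closed path is, up to rotation and reflection, one of the configurations of Figure~\ref{fig:threestepladder} or of Figure~\ref{fig:Lshape}. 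Starting from an arbitrary $v \in V_f^+$, I would apply Lemmas~\ref{Lemma1}, \ref{Lemma2} and~\ref{Lemma3} in turn at the next change of direction encountered as one traverses the closed path: at a three-step ladder block only the vertex $d$ of Figure~\ref{fig:threestepladder} can lie in $V_f^+ \cup V_f^-$, while at an $L$-shape block exactly one of the vertices $e$ and $\ell$ of Figure~\ref{fig:Lshape} lies in $V_f^+ \cup V_f^-$. Propagating these local constraints around the entire closed path, the set $V_f^+ \cup V_f^-$ is forced to coincide with $\{z_i\} \cup \{u_i\}$ for some zig-zag walk $\cW$ in the sense of Discussion~\ref{dis:zig-zag}, and the sign pattern is dictated by Remark~\ref{necklace}\,(\ref{rem:zigzagbinomial}); hence $f = \pm f_\cW$, as desired.

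The main obstacle is the combinatorial bookkeeping in this propagation. One must verify that the forced pattern at each change of direction chains consistently around the entire closed path, and that the binary choice available at each $L$-shape block (whether the contributing vertex is $e$ or $\ell$) is compatible with all other choices, so as to preserve the balance $|V_f^+ \cap \mathcal{I}| = |V_f^- \cap \mathcal{I}|$ on every maximal edge interval. This is precisely the content of Lemmas~\ref{Lemma1}--\ref{Lemma3}; the role of the theorem's proof is to chain them into a global reconstruction of a zig-zag walk from the support of~$f$.
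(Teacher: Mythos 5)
Your overall architecture matches the paper's: reduce to irredundant binomials of degree at least $3$ and show each such binomial equals $\pm f_\cW$ for some zig-zag walk $\cW$. Your treatment of the inclusion $Z_\cP\subseteq J_\cP$ is a genuine and cleaner alternative: the paper verifies $\psi(f_\cW^+)=\psi(f_\cW^-)$ by a direct case analysis on where each $z_j$ sits in the decomposition of $\cP$ into corner configurations, whereas you get it from $x_{v_1}f_\cW\in I_\cP\subseteq J_\cP$, the primality of $J_\cP$, and $\psi(x_{v_1})\neq 0$. That part is correct.

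The hard direction, however, contains both an error and a gap. The error: you assert that at a ladder configuration ``only the vertex $d$ of Figure~\ref{fig:threestepladder} can lie in $V_f^+\cup V_f^-$'' and that at an $L$-shape ``exactly one of $e$ and $\ell$ of Figure~\ref{fig:Lshape}'' does. These are the conclusions of Discussion~\ref{dis:zig-zag} about the \emph{necklace} vertices $v_i$ of a zig-zag walk, and the necklace vertices are precisely the ones that do \emph{not} occur in $f_\cW=\prod_i x_{z_i}-\prod_i x_{u_i}$. What is actually needed (and what the paper proves) is the opposite pattern for the support of $f$: at the distinguished ladder configuration one shows $a,b,d,m\notin V_f^+\cup V_f^-$, while $\{c,e\}$ splits with one vertex in $V_f^+$ and one in $V_f^-$ (compare Remark~\ref{necklace}(3)). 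Propagating the constraints as you state them would reconstruct the necklace, not the binomial, so the reconstruction of $f_\cW$ from $\mathrm{supp}(f)$ would fail. The gap: you start the propagation ``from an arbitrary $v\in V_f^+$,'' but nothing in your argument forces the support of $f$ to reach, let alone wrap around, the entire closed path. The paper anchors the argument at the unique ladder configuration whose vertices $a,b,c,d,e$ carry the auxiliary variable $w$ in the definition of $\psi$: if $V_f^+\cup V_f^-$ missed all of $\{a,b,c,d,e,m\}$, then $f$ would lie in $\ker\psi'=I_{\cP'}$ for the \emph{simple} polyomino $\cP'$ obtained by removing the cells in $[c,m]$ and $[e,m]$, forcing $\deg f=2$; and the balance of $w$ in $\psi(f^+)=\psi(f^-)$ is exactly what forces one of $c,e$ into $V_f^+$ and the other into $V_f^-$, giving the determined sign from which Lemmas~\ref{Lemma1}--\ref{Lemma3} can be chained around the path. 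Without engaging with $w$ you can neither exclude irredundant binomials of higher degree supported in a simple sub-polyomino of $\cP$ nor initialize the propagation.
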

\begin{proof}

$\supseteq)$ Let $f=f^+ - f^-\in J_\cP$ be an irredundant binomial. Since $(J_\cP)_2=I_\cP$, if $\deg f=2$ then $f\in I_\cP$. So, suppose $\deg f\geq 3$. We prove that $f\in Z_\cP$ and in particular $f=\pm f_\cW$ for some zig-zag walk $\cW$ in $\cP$.\\
Consider the unique sub-polyomino of $\cP$ having configuration as in Figure~\ref{fig:threestepladder} and such that the points $a,b,c,d,e$ involve the variable $w$ in the toric ring that define $J_\cP$. We first show that $a,b,d,m\notin V_f^+ \cup V_f^-$. \\
If $b\in V_f^+$ then $a\in V_f^-$ or $m\in V_f^-$. In the first case, there exists a vertex $F\in V_f^+$ in the same vertical edge interval of $a$ and since $\cP$ has zig-zag walks then $a,F,b$ are the corners of an inner 2-minor, that is a contradiction of Lemma~\ref{Lemma shikama1}(1). In the second case, there exists a vertex $F\in V_f^+$ in the same vertical edge interval of $m$ such that $m,F,b$ are the corners of an inner 2-minor, obtaining the same contradiction. So $b\notin V_f^+$ and with the same argument we can prove $b\notin V_f^-$. Suppose that $a\in V_f^+$, since $b\notin V_f^+\cup V_f^-$ then $m\in V_f^-$ and in order to avoid contradictions the only possibility is $e_1\in V_f^+$ and $c,e\notin V_f^-$. It follows that there exists a vertex $F\in V_f^-$ in the same horizontal edge interval of $e$ (with $F\neq e,e_1$), and since $\cP$ has zig-zag walks then $d,e,F$ are the corners of an inner 2-minor. Moreover, considering $a\in V_f^+$, then $w\in \mathrm{supp}(\psi(f^+))=\mathrm{supp}(\psi(f^-))$, and since $b,c,e\notin V_f^-$ the only possibility is $d\in V_f^-$. So, we obtain a contradiction of Lemma~\ref{Lemma shikama1}(2), considering the vertices $a,d,e_1,m,F$. Therefore $a\notin V_f^+$ and with the same argument we can prove $a\notin V_f^-$. Suppose $d\in V_f^+$, then $w\in \mathrm{supp}(\psi(f^+))=\mathrm{supp}(\psi(f^-))$ and since $a,b\notin V_f^+\cup V_f^-$ then $c\in V_f^-$ or $e\in V_f^-$. Since $\cP$ has zig-zag walks, in the first case there exist e vertex $F\in V_f^+$ such that $F$ belong in the same vertical edge interval of $c$ and $c,F,d$ are corners of the same inner 2-minor, while in the second case $F$ belong in the same horizontal edge interval of $e$ and $F,e,d$ are corners of the same inner 2-minor. In both cases we obtain a contradiction of Lemma~\ref{Lemma shikama1}(1). Therefore $d\notin V_f^+$ and with the same argument we can prove $d\notin V_f^-$. Finally, since $a,b\notin V_f^+\cup V_f^-$, considering their horizontal edge interval we obtain $m\notin V_f^+\cup V_f^-$.So, $(V_f^+ \cup V_f^-)\cap \{a,b,c,d,e,m\}\subseteq \{c,e\}$.\\ 
Suppose that  $(V_f^+ \cup V_f^-)\cap \{a,b,c,d,e,m\}=\emptyset$. Then all variables of $f$ are contained in $S'=K[x_v\mid v\in V(\cP)\setminus \{a,b,c,d,e,m\}]$. Consider $\psi'$ the restriction of $\psi$ on $S'$ and $\cP'$ the polyomino obtained by $\cP$ removing the cells in $[c,m]$ and $[e,m]$. It is verified that $f\in \ker \psi'$ and,by the results contained in \cite{QSS}, we have $\ker \psi'=I_{\cP'}$. So $f$ is an irredundant binomial in $I_{\cP'}$, but this means that $\deg f=2$, that contradicts our assumption. So, $(V_f^+ \cup V_f^-)\cap \{c,e\}\neq \emptyset$ and it is not possible $|(V_f^+ \cup V_f^-)\cap \{c,e\}|=1$, since $w \in \mathrm{supp}(\psi(f^+))=\mathrm{supp}(\psi(f^-))$. Then, the only possibility is either $c\in V_f^+$ and $e\in V_f^-$ or $c\in V_f^-$ and $e\in V_f^+$. Without loss of generality we can suppose that the first possibility occurs. Now we continue our argument following the structure of $\cP$. 

%
%
%
%
%

%
\noindent Since $\cP$ has zig-zag walks then (continuing on the ``east'' part) we can continue considering  a subpolyomino having the same shape as one in Figure~\ref{img4_prime}. Let $\cV_1$ be the set of vertices of such a subpolyomino. In particular, considering Figure~\ref{img4_prime-1} and Figure~\ref{img4_prime-2}, we set $i$ varying on $\{1,\ldots,n\}$, where $b_1^{(1)}$ belong to the same horizontal edge interval of $e$ and $n$ is the index such that $a_n^{(1)}$ belongs to the same horizontal edge interval of $c_1$.
\begin{figure}[h]
	\subfloat[]{\includegraphics[scale=0.65]{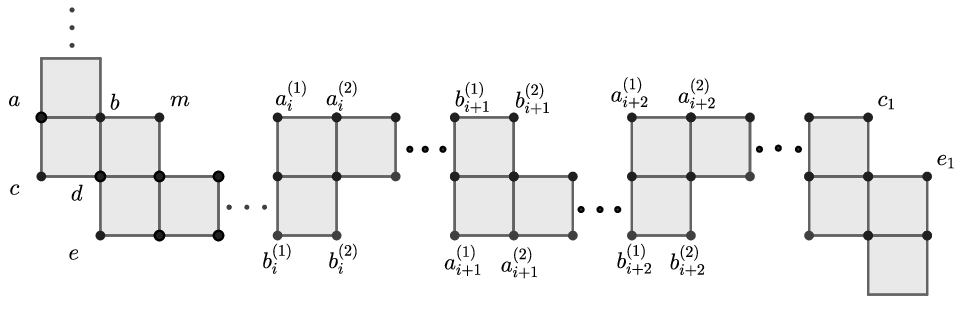}\label{img4_prime-1}}\quad 
	\subfloat[]{\includegraphics[scale=0.65]{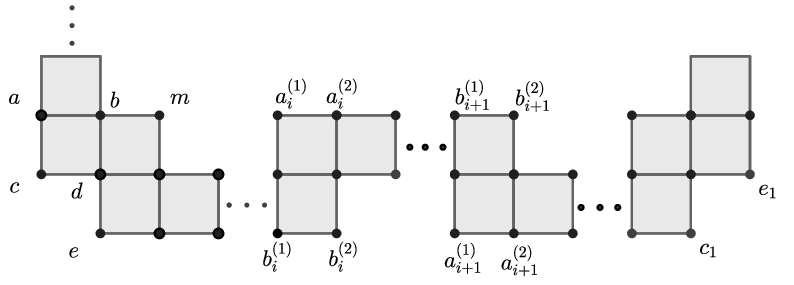}\label{img4_prime-2}}
	\qquad
	\subfloat[]{\includegraphics[scale=0.65]{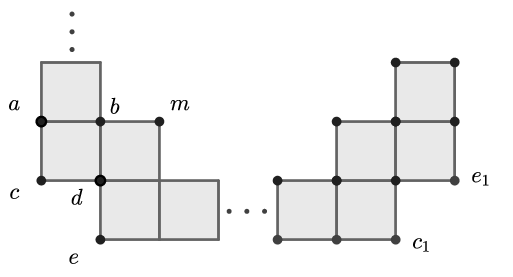}\label{img4_prime-3}}
	\caption{Possible configurations of $\mathcal{C}_k$, up to reflections or rotations.} 
	\label{img4_prime}
\end{figure}

\noindent By Lemma~\ref{Lemma1}, Lemma~\ref{Lemma2} and Lemma~\ref{Lemma3} observe that, in the horizontal edge interval of $c$, all vertices different from $c$ and $e_1$ does not belong to $V^+ \cup V^-$. So, since $c\in V_f^+$, it is verified that $e_1\in V_f^-$. Therefore, by the same lemmas we can argue that $$V^+ \cap \cV_1=\{c,c_1,b_i^{(j_i)} \mid i\in \{1,\ldots,n\}, \mbox{and either $j_{i}=1$ or $j_{i}=2$}\},$$ $$V^- \cap \cV_1=\{e,e_1,a_i^{(j_i)} \mid i\in \{1,\ldots,n\}, \mbox{and either $j_{i}=1$ or $j_{i}=2$}\}.$$\\
Observe that $\cP$ can be built as union of the configurations in Figure~\ref{img4_prime}. In particular we set $\cP=\bigcup_{k=1}^{t} \cC_k$, where $\cC_k$, for all $k\in \{1,\ldots,t\}$, is a configuration as in Figure~\ref{img4_prime-1} or Figure~\ref{img4_prime-2} or Figure~\ref{img4_prime-3}. Let $\cV_k$ the set of the vertices of $\cC_k$ that are highlighted with a black point in the picture.  Denote with $c_k,e_k, a_{i,k}^{(j_i)}, b_{i,k}^{(j_i)}$, for $j_i\in\{1,2\}$, the vertices in $\cC_k$ corresponding to the vertices $c_1,e_1,a_i^{(j_i)},b_i^{(j_i)}$ in the picture, and let $n_k$ be the index such that $a_{n_k,k}^{(1)}$ belong to the same horizontal edge interval of $c_k$. Observe that, since $\cP$ is a closed path, $c_t=c$ and $e_t=e$. Moreover, starting from $c_1\in V_f^+$ and $e_1\in V_f^-$, that we proved before, and considering the vertices in $\cC_1$ not belonging to $V_f^+\cup V_f^-$ (as also we have shown before), by Lemma~\ref{Lemma1}, Lemma~\ref{Lemma2}, Lemma~\ref{Lemma3} and using the same arguments we can obtain that, for all $k\in \{2,\ldots,t\}$:
$$V^+ \cap \cV_k=\{c_{k-1},c_k,b_{i,k}^{(j_i)} \mid i\in \{1,\ldots,n_k\}, \mbox{and either $j_{i}=1$ or $j_{i}=2$}\},$$ $$V^- \cap \cV_k=\{e_{k-1},e_k,a_{i,k}^{(j_i)} \mid i\in \{1,\ldots,n_k\}, \mbox{and either $j_{i}=1$ or $j_{i}=2$}\}.$$

\noindent This means that: 

$$f=\prod_{k=1}^{t}\left(x_{c_k}\prod_{i=1}^{n_k}x_{b_{i,k}^{(j_i)}}\right)-\prod_{k=1}^{t}\left(x_{e_k}\prod_{i=1}^{n_k}x_{a_{i,k}^{(j_i)}}\right)$$

\noindent One can see that (following also the mentioned figures) that the variable involved in $f$ allow to obtain a zig-zag walk $\cW$ of $\cP$ and that the structure of $f$ corresponds exactly to $f=f_\cW$ or $f=-f_\cW$ (the symbol + or - depends on the convention on $z_i$ and $u_i$, with reference to the definition of $f_\cW$ mentioned in the preliminaries of this paper). So, we can conclude $f\in Z_\cP$.\\
\noindent $\subseteq)$ We know that $I_\cP\subseteq J_\cP$. Let $\cW$ be a zig-zag walk and $f_\cW=\prod_{i=1}^{\ell}x_{z_i} - \prod_{i=1}^{\ell}x_{u_i}$ the related binomial. Set $f^+ =\prod_{i=1}^{\ell}x_{z_i}$ and $f^- =\prod_{i=1}^{\ell}x_{u_i}$, we prove that $\psi(f^+)= \psi(f^-)$.  Let $j\in\{1,\ldots,\ell\}$, $x_{z_j}\in \mathrm{supp}(f^+)$ and  $V_j$ and $H_j$ be respectively the vertical and horizontal edge interval of $z_j$, so $v_j,h_j \in \mathrm{supp}(\psi(f^+))$. We prove that $v_j,h_j \in \mathrm{supp}(\psi(f^-))$. Consider that $\cP$ can be built as union of the configurations in Figure~\ref{img4_prime}. Using the same construction of the previous part of the proof, we consider $\cP=\bigcup_{k=1}^{t} \cC_k$ where $\cC_k$, for all $k\in \{1,\ldots,t\}$, is a configuration as in Figure~\ref{img4_prime-1}, Figure~\ref{img4_prime-2} or Figure~\ref{img4_prime-3}, and denote with $c_k,e_k, a_{i,k}^{(j_i)}, b_{i,k}^{(j_i)}$, for $j_i\in\{1,2\}$, the vertices in $\cC_k$ corresponding to the vertices $c_1,e_1,a_i^{(j_i)},b_i^{(j_i)}$. Let $n_k$ be the index such that $a_{n_k,k}^{(1)}$ belong to the same horizontal edge interval of $c_k$. It is easy to see that $z_j\in \{c_k,e_k, a_{i,k}^{(j_i)}, b_{i,k}^{(j_i)}\}$ for some $k\in \{1,\ldots,t\}$. In each case we obtain that $v_j,h_j\in \mathrm{supp}(\psi(f^-))$, as we explain in the following: 

\begin{itemize}
\item if $z_j=c_k$, we can consider  $a_{n_k,k}^{(j_i)}, e_{k+1} \in f^-$, for $j_i=1$ or $j_i=2$, with either $a_{n_k,k}^{(j_i)}\in V_i$ and $e_{k+1}\in H_j$ or $a_{n_k,k}^{(j_i)}\in H_j$ and $e_{k+1}\in V_j$; 
\item if $z_j=e_k$, we can consider  $b_{1,k}^{(j_i)}, c_{k-1} \in f^-$, for $j_i=1$ or $j_i=2$, with either $b_{1,k}^{(j_i)}\in V_j$ and $c_{k-1}\in H_j$ or $b_{1,k}^{(j_i)}\in H_j$ and $c_{k-1}\in V_j$;
\item if $z_j=a_{i,k}^{(j_i)}$ with $i\neq n_k$, we can consider  $b_{i,k}^{(j_i)}, b_{i+1,k}^{(j_{i+1})}\in f^-$ with either $b_{i,k}^{(j_i)}\in V_j$ and $b_{i+1,k}^{(j_{i+1})}\in H_j$ or $b_{i,k}^{(j_i)}\in H_j$ and $b_{i+1,k}^{(j_{i+1})}\in V_j$; \item if $z_j=a_{n_k,k}^{(j_i)}$ we can consider  $b_{n_k,k}^{(j_i)}, c_k\in f^-$ with either $b_{n_k,k}^{(j_i)}\in V_j$ and $c_k\in H_j$ or $b_{n_k,k}^{(j_i)}\in H_j$ and $c_k\in V_j$; 
\item if $z_j=b_{i,k}^{(j_i)}$ with $i\neq 1$, we can consider  $a_{i,k}^{(j_i)}, a_{i-1,k}^{(j_{i-1})}\in f^-$ with either $a_{i,k}^{(j_i)}\in V_j$ and $a_{i-1,k}^{(j_{i-1})}\in H_j$ or $a_{i,k}^{(j_i)}\in H_j$ and $a_{i-1,k}^{(j_{i-1})}\in V_j$;
\item if $z_j=b_{1,k}^{(j_i)}$ we can consider  $a_{1,k}^{(j_i)}, e_{k-1}\in f^-$ with either $a_{1,k}^{(j_i)}\in V_j$ and $e_{k-1}\in H_j$ or $a_{1,k}^{(j_i)}\in H_j$ and $e_{k_1}\in V_j$.
\end{itemize}
 Finally , with reference to Figure~\ref{fig:threestepladder} we easily obtain that either $x_c\in \mathrm{supp}(f^+)$ and $x_e\in \mathrm{supp}(f^-)$ or $x_c\in \mathrm{supp}(f^-)$ and $x_e\in \mathrm{supp}(f^+)$. So we can conclude that $\psi(f^+)=w\prod_{i=1}^{\ell}v_i h_i = \psi(f^-)$, that is $f_\cW\in J_\cP$.
\end{proof}

\noindent So we have proved that the ideal $\mathfrak{p}_1$, defined in the previous section, is prime. By the following general result we can also argue that $\mathrm{height}(\mathfrak{p}_1)=|\cP|$

\begin{proposition}\label{prop:p_1prime}
Let $\cP$ be a collection of cells having zig-zag walks and let $Z_\cP$ be the ideal generated by the binomials $f_\cW$, for every zig-zag walk $\cW$ in $\cP$. Denoted $J=I_\cP+Z_\cP$,
then $\mathrm{height}(J)\leq |\cP|$. Moreover if $J$ is unmixed or $\mathrm{height}(I_\cP)=|\cP|$ then equality holds.
\end{proposition}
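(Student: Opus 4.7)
The plan is to relate $J$ to the lattice ideal $L_\cP$ of $\cP$ regarded as a polyocollection, and to exploit that $L_\cP$ is prime, of height $|\cP|$, and a minimal prime of $I_\cP$. First I would establish $J\subseteq L_\cP$. For any zig-zag walk $\cW : I_1,\ldots,I_l$ of $\cP$, the identity recalled before Lemma~\ref{lem:onezigzagwalk} reads
\[
\pm\, x_{v_1}\, f_\cW \;=\; \sum_{i=1}^{l}(-1)^{i+1}\Bigl(\prod_{j<i}x_{u_j}\Bigr)\Bigl(\prod_{k>i}x_{z_k}\Bigr) f_{I_i},
\]
so $x_{v_1}f_\cW \in I_\cP \subseteq L_\cP$. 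Since $L_\cP$ is prime by Theorem~\ref{thm:lc_prime} and contains no variables (as noted after that theorem), $f_\cW \in L_\cP$, whence $J=I_\cP+Z_\cP\subseteq L_\cP$.

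Next I would compute $\mathrm{height}(L_\cP)=|\cP|$. Using the map $\psi$ constructed in the proof of Theorem~\ref{thm:lc_prime}, we have $S_\cP/L_\cP\cong T_\cP := \psi(S_\cP)\subseteq K[y_b^{\pm 1}: b\in F(\cP)]$, so $\dim T_\cP\leq |F(\cP)|$. Conversely, for $a\in F(\cP)$ the vector $\mathbf{v}_a$ is itself one of the basis elements of Lemma~\ref{basis}, so its expansion is trivial and $\psi(x_a)=y_a$; hence $T_\cP$ contains $K[y_a: a\in F(\cP)]$ and $\dim T_\cP \geq |F(\cP)|$. Therefore $\mathrm{height}(L_\cP)=|V(\cP)|-|F(\cP)|=|\cP|$, the last equality using that each cell of $\cP$ contributes exactly one lower-left corner to $V(\cP)\setminus F(\cP)$. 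Combined with the previous paragraph, $\mathrm{height}(J)\leq \mathrm{height}(L_\cP)=|\cP|$.

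For the equality statements I would use minimality. I claim $L_\cP$ is a minimal prime of $I_\cP$: if $\mathfrak{p}$ is prime with $I_\cP\subseteq\mathfrak{p}\subseteq L_\cP$, then for any $f\in L_\cP$ Lemma~\ref{lemma-Romeo} supplies a monomial $u$ with $fu\in I_\cP\subseteq\mathfrak{p}$; since $L_\cP$ contains no monomials, $u\notin L_\cP\supseteq\mathfrak{p}$, so $f\in\mathfrak{p}$, giving $\mathfrak{p}=L_\cP$. The same argument applied to the chain $I_\cP\subseteq J\subseteq L_\cP$ shows $L_\cP$ is a minimal prime of $J$ as well. If $\mathrm{height}(I_\cP)=|\cP|$, then $\mathrm{height}(J)\geq \mathrm{height}(I_\cP)=|\cP|$ and equality follows from the upper bound. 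If $J$ is unmixed, all its minimal primes share the same height, and applying this to $L_\cP$ yields $\mathrm{height}(J)=\mathrm{height}(L_\cP)=|\cP|$. The main technical hurdle is the height computation $\mathrm{height}(L_\cP)=|\cP|$, which depends on pinning down the image of $\psi$ via the basis of Lemma~\ref{basis}; once this is in hand the rest reduces to standard manipulations in the chain $I_\cP\subseteq J\subseteq L_\cP$.
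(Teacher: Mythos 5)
Your proof is correct, but it follows a genuinely different route from the paper's. The paper works entirely with the exponent vectors: it forms the $\mathbb{Q}$-vector space $V_J$ spanned by $\{\mathbf{v}-\mathbf{w} : x^{\mathbf{v}}-x^{\mathbf{w}}\in J\}$, invokes \cite[Theorem 1.1]{height} to get $\mathrm{height}(J)\leq \dim_{\mathbb{Q}}V_J$ (with equality in the unmixed case), and then shows $V_J$ is spanned by the linearly independent cell vectors $\mathbf{v}_C$ --- the key computation being that the exponent vector of $f_\cW$ equals the alternating sum $\sum_i(-1)^{i+1}\mathbf{v}_{I_i}$. You instead sandwich $J$ between $I_\cP$ and the lattice ideal $L_\cP$, using the identity $x_{v_1}f_\cW\in I_\cP$ together with primality of $L_\cP$ and the absence of monomials in it to get $Z_\cP\subseteq L_\cP$; you then compute $\mathrm{height}(L_\cP)=|V(\cP)|-|F(\cP)|=|\cP|$ from the toric parametrization of Theorem~\ref{thm:lc_prime} and Lemma~\ref{basis}, and handle the unmixed case by showing (via Lemma~\ref{lemma-Romeo}) that $L_\cP$ is a minimal prime of $J$. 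Both arguments are sound. Your version leans on the Section~3 machinery and on standard dimension theory for affine domains ($\mathrm{height}(P)+\dim S/P=\dim S$) rather than on the cited height theorem for binomial ideals, and it yields the extra information that $L_\cP$ is a minimal prime of $J$ of height exactly $|\cP|$; the paper's version is shorter given the external citation and identifies the exact vector space $V_J$, which is what the cited theorem is designed to exploit.
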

\begin{proof}
Let $V_{J}$ be the $\mathbb{Q}$-vector space generated by the set $\{\mathbf{v}-\mathbf{w}\in \mathbb{Q}^n\mid x^\mathbf{v}-x^\mathbf{w}\in J\}$. By \cite[Theorem 1.1]{height} then $\mathrm{height}(J)\leq \dim_\mathbb{Q}V_{J}$, and if $J$ is unmixed equality holds. We prove that $\dim_\mathbb{Q}V_{J}=|\cP|$. Observe that $V_{J}$ is a subspace of $\mathbb{Q}^{|V(\cP)|}$. For $a\in V(\cP)$, we denote by $\mathbf{v}_a$ the vector in $\mathbb{Q}^{|V(\cP)|}$  whose $a$-th component is 1, while its other components are 0 (in particular, the set $\{\mathbf{v}_a\mid a\in V(\cP)\}$ is the canonical basis of $\mathbb{Q}^{|V(\cP)|}$). Moreover if $I=[a,b]$ is an inner interval of $\cP$, with diagonal corners $a,b$ and anti-diagonal corners $c,d$, we denote $\mathbf{v}_I=\mathbf{v}_a+\mathbf{v}_b-\mathbf{v}_c-\mathbf{v}_d$. Let $\cB=\{\mathbf{v}_C\mid C\ \mbox{is a cell of}\ \cP\}$, by the proof of \cite[Theorem 3.1]{height} we know that the vectors in $\cB$ are linearly independent and if $I$ is an inner interval of $\cP$ then $\mathbf{v}_I\in \langle\cB\rangle$. Moreover, if $f_\cW=\prod_{i=1}^{l}x_{z_i} - \prod_{i=1}^{l}x_{u_i}$ is the binomial associated to a zig-zag walk $\cW:I_1,\ldots,I_l$, it is not difficult to check that $\sum_{i=1}^r \mathbf{v}_{z_i}-\sum_{i=1}^r \mathbf{v}_{u_i}=\sum_{i=1}^r (-1)^{i+1}\mathbf{v}_{I_i}\in \langle \cB\rangle$.
So $V_{J}=\langle \cB \rangle$ and in particular $\dim_\mathbb{Q} V_{J}=|\cP|$. Finally, if $|\cP|=\mathrm{height}(I_\cP)$, then $|\cP|=\mathrm{height}(I_\cP)\leq \mathrm{height}(J)\leq |\cP|$. Therefore $\mathrm{height}(J)=|\cP|$.
\end{proof}

\subsection{Final result}


\begin{Lemma}\label{lem:zig-zagBinomial}
       Let $\MP$ be a non-prime closed path polyomino and let $\pp$ be a minimal prime of $I_{\MP}$. 
       If $f_\MW\notin \pp$ for some zig-zag walk $\MW$ of $\MP$, then $\pp=\pp_2$, where $\pp_2$ is as defined in Notation~\ref{not:primeideals}.
   \end{Lemma}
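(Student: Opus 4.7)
The strategy is to prove the containment $\pp_2 \subseteq \pp$ and then invoke minimality: once $\pp_2 \subseteq \pp$ is known, the chain $I_\MP \subseteq \pp_2 \subseteq \pp$ of prime ideals (with $\pp_2$ prime by Proposition~\ref{prop:p_2prime} and $I_\MP \subseteq \pp_2$ by Lemma~\ref{lem:i_pinp2}) together with minimality of $\pp$ over $I_\MP$ forces $\pp = \pp_2$.

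Verifying $\pp_2 \subseteq \pp$ reduces to checking the three families of generators of $\pp_2$. The first family, $\{x_v : v \in N(\MP)\}$, lies in $\pp$ immediately by Lemma~\ref{lem:onezigzagwalk} applied to the hypothesis $f_\MW \notin \pp$. The third family, $R(\MP)$, consists of inner $2$-minors of $\MP$ (coming from the inner interval $[c,d]$ of the L-configuration in Figure~\ref{fig:Lshape}), so $R(\MP) \subseteq I_\MP \subseteq \pp$ trivially.

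The main work is the second family, namely showing $x_m \in \pp$ for every $m \in M(\MP)$. Fix such an $m$ together with the associated three-step ladder as in Figure~\ref{fig:threestepladder} (up to reflections and rotations). The pairs of cells forming $[c,m]$ and $[e,m]$ are both inner intervals of $\MP$, so they produce the $2$-minors
\[
x_c x_m - x_a x_{d_1} \quad \text{and} \quad x_e x_m - x_b x_{e_1}
\]
in $I_\MP \subseteq \pp$. Remark~\ref{necklace}(3) gives $b, d_1 \in N(\MP)$, hence $x_b, x_{d_1} \in \pp$; substituting back yields $x_c x_m \in \pp$ and $x_e x_m \in \pp$.

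To pin down $x_m$ itself I invoke $f_\MW \notin \pp$. Writing $f_\MW = \prod_i x_{z_i} - \prod_i x_{u_i}$, primality of $\pp$ rules out both products lying in $\pp$, so after relabelling we may assume $\prod_i x_{u_i} \notin \pp$, whence every $x_{u_i} \notin \pp$. By Remark~\ref{necklace}(3), exactly one of $x_c, x_e$ appears in $\prod_i x_{u_i}$, so that variable is not in $\pp$; combined with $x_c x_m \in \pp$ or $x_e x_m \in \pp$ and primality, this gives $x_m \in \pp$. The only delicate point I anticipate is the bookkeeping arising from the reflection and rotation cases of Figure~\ref{fig:threestepladder} and from the separate labelled subconfigurations of Figure~\ref{fig:Lshape}; the algebraic content, however, is uniform in all of them: two inner $2$-minors adjacent to $m$ together with the ``one on each side'' content of Remark~\ref{necklace}(3).
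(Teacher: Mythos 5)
Your proposal is correct and follows essentially the same route as the paper: Lemma~\ref{lem:onezigzagwalk} handles the variables indexed by $N(\MP)$, the two inner $2$-minors $x_cx_m-x_ax_{d_1}$ and $x_ex_m-x_bx_{e_1}$ together with Remark~\ref{necklace}(3) force $x_m\in\pp$, and minimality of $\pp$ over $I_\MP\subseteq\pp_2\subseteq\pp$ closes the argument. The only cosmetic differences are that you phrase the $x_m$ step directly rather than by contradiction and that you explicitly note $R(\MP)\subseteq I_\MP$, which the paper leaves implicit.
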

   \begin{proof}
       Suppose $f_\cW=\prod_{i=1}^{l}x_{z_i} - \prod_{i=1}^{l}x_{u_i}\notin \pp$ for some $\cW$. 
       First we show that under the labelling of Figure~\ref{fig:threestepladder},  $x_m \in \pp$. 
       Note that $x_{m}x_e - x_{b}x_{e_1}, x_{m}x_c - x_{a}x_{d_1}\in \pp$. Since, by Lemma~\ref{lem:onezigzagwalk} and Remark~\ref{necklace}, $x_{b}, x_{d_1} \in \pp$, we get $x_m x_e, x_m x_c \in \pp$. 
       If $x_m\notin \pp$ we obtain that $x_e,x_c\in \pp$, and consequently $f_\cW\in \pp$ by~\eqref{rem:zigzagbinomial} of Remark~\ref{necklace}, which is a contradiction. 
       Therefore $x_m\in \pp$.
       Under the assumption, by Lemma~\ref{lem:onezigzagwalk}, $x_v\in \pp$ for all $v\in N(\MP)$. Therefore, we have $\pp_2 \subseteq \pp$.
       Since $\pp$ is a minimal prime of $I_\MP$, $\pp_2$ is a prime ideal (see Lemma~\ref{prop:p_2prime}) and $I_\cP \subseteq p_2$ by Lemma~\ref{lem:i_pinp2}, we get $\pp=\pp_2$.
       Hence the proof.
   \end{proof}
 
   \noindent We are now ready to prove our main theorem. We recall that an ideal $I$ of $K[x_1,\dots,x_n]$ is called \textit{unmixed} if all associated prime ideals of $I$ have the same height.

   \begin{theorem}\label{thm:mainprimary}
        Let $\MP$ be a non-prime closed path polyomino.
        Then $I_\MP = \pp_1 \cap \pp_2$, where $\pp_1$ and $\pp_2$ are as defined in Notation~\ref{not:primeideals}. In particular, $I_\MP$ is unmixed. 
   \end{theorem}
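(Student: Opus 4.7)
The plan is to identify $\pp_1$ and $\pp_2$ as exactly the minimal primes of $I_\MP$ and then to promote the intersection equality $\sqrt{I_\MP} = \pp_1 \cap \pp_2$ to $I_\MP$ itself via radicality of $I_\MP$; unmixedness will then be automatic since both primes share the common height $|\MP|$.

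The inclusion $I_\MP \subseteq \pp_1 \cap \pp_2$ is immediate: $I_\MP \subseteq \pp_1$ holds by the definition of $\pp_1$, and $I_\MP \subseteq \pp_2$ is Lemma~\ref{lem:i_pinp2}. For the identification of minimal primes, let $\pp$ be any minimal prime of $I_\MP$. If there is some zig-zag walk $\MW$ with $f_\MW \notin \pp$, then Lemma~\ref{lem:zig-zagBinomial} forces $\pp = \pp_2$. Otherwise $f_\MW \in \pp$ for every zig-zag walk $\MW$, so $\pp_1 = I_\MP + Z_\MP \subseteq \pp$; since $\pp_1$ is prime by Theorem~\ref{thm:p_1prime} and $\pp$ is minimal over $I_\MP \subseteq \pp_1$, we conclude $\pp = \pp_1$. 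Conversely, both $\pp_1$ and $\pp_2$ are prime ideals containing $I_\MP$ whose heights equal $|\MP| = \mathrm{height}(I_\MP)$ by Propositions~\ref{prop:p_2prime}, \ref{prop:I_Pheight} and \ref{prop:p_1prime}; in particular neither properly contains any prime that contains $I_\MP$, so both are minimal primes of $I_\MP$. Combining these two directions, $\{\pp_1,\pp_2\}$ is precisely the set of minimal primes of $I_\MP$, and therefore $\sqrt{I_\MP}=\pp_1\cap \pp_2$.

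The hard part is to promote this to $I_\MP = \pp_1 \cap \pp_2$, i.e.\ to establish that $I_\MP$ is radical. I expect this to be the main technical obstacle. One natural approach is to exhibit a Gr\"obner basis of $I_\MP$ whose initial ideal is squarefree, a strategy that has been effective for related classes such as thin polyominoes. A more self-contained alternative is to verify $\pp_1\cap \pp_2\subseteq I_\MP$ by hand: given $f$ in the intersection, use $\pp_1 = I_\MP + Z_\MP$ to write $f = g + h$ with $g\in I_\MP$ and $h$ a $K$-linear combination of zig-zag binomials $f_\MW$, and then exploit the explicit description of $N(\MP)$, $M(\MP)$ and $R(\cP)$ together with the ladder/$L$-shape lemmas underlying the proof of Theorem~\ref{thm:p_1prime} to show that $h$ itself lies in $I_\MP$ under the additional constraint $f\in \pp_2$.

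Once $I_\MP = \pp_1\cap \pp_2$ is established, unmixedness is immediate: the associated primes of a radical ideal are precisely its minimal primes, and here these are $\pp_1$ and $\pp_2$ of the common height $|\MP|$.
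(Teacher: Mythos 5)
Your proposal is correct and follows essentially the same route as the paper: show $\pp_1$ and $\pp_2$ are minimal primes via the height computations, show any minimal prime must be one of them via the zig-zag dichotomy of Lemma~\ref{lem:zig-zagBinomial}, and conclude by radicality of $I_\MP$. The radicality step that you flag as the main remaining obstacle is handled in the paper exactly by your first suggested approach — it is already known that $I_\MP$ admits a Gr\"obner basis with squarefree initial terms for closed paths (cited from the literature), so no further work is needed there.
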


   \begin{proof}
       First, note that $\pp_1$ and $\pp_2$ are prime ideals by Proposition~\ref{prop:p_2prime} and Theorem~\ref{thm:p_1prime}. 
       Clearly, $I_\MP\subset \pp_1$ and by Lemma~\ref{lem:i_pinp2}, $I_\MP\subset \pp_2$. 
   Since $\mathrm{height}(\pp_1)=\mathrm{height}(\pp_2) =\mathrm{height}(I_\MP)$ by Propositions~\ref{prop:I_Pheight},~\ref{prop:p_2prime} and~\ref{prop:p_1prime}, 
       $\pp_1$ and $\pp_2$ are  minimal prime ideals containing $I_\MP$. 
       Moreover, by \cite{Cisto_Navarra_groebner} there exists a Gr\"{o}bner basis of $I_{\MP}$ with square-free initial terms, so $I_\MP$ is radical (see the proof of \cite[Corollary 2.2]{binomial_edge}). 
       Therefore, every associated prime of $I_{\MP}$ is minimal. 
       So it suffices to show that $\pp_1$ and $\pp_2$ are the only minimal prime ideals containing $I_\MP$.  
       Let $\pp$ be a minimal prime of $I_\MP$. 
       By~\cite[Theorem\ 6.2]{Cisto_Navarra_closed_path}, $\MP$ contain zig-zag walks.
       If there exists a zig-zag walk $\MW$ of $\MP$ such that $f_\MW \notin \pp$, then $\pp=\pp_2$ by Lemma~\ref{lem:zig-zagBinomial}.
       If for all zig-zag walk $\MW$ of $\MP$, $f_\MW \in \pp$, then $\pp_1\subseteq \pp$.
       Since $\pp_1$ is a prime ideal, we get that $\pp=\pp_1$.
Hence the proof.
   \end{proof}
   
   

   \section{Insights and open questions}\label{sec:questions}
    
Let $\cP$ be a closed path polyomino. Comparing Theorem~\ref{thm:mainprimary} with Theorem~\ref{primary-total} we can observe that $\mathfrak{p}_1=J_\emptyset =L_\cP$. Moreover $Y=N(\MP)\cup M(\MP)$ is an admissible set of $\cP$ and $\pp_2 =J_Y$. In such a particular case the polyocollection $\cP^{(Y)}$ consists of the disjoint intervals related to the binomials in $R(\cP)$. See for instance the closed path in Figure~\ref{fig:polyomino-polyocoll}, where the vertices in the set $Y$ are highlighted in green and $\cP^{(Y)}$ consists of the red intervals.  

\begin{figure}[h!]
\includegraphics[scale=0.70]{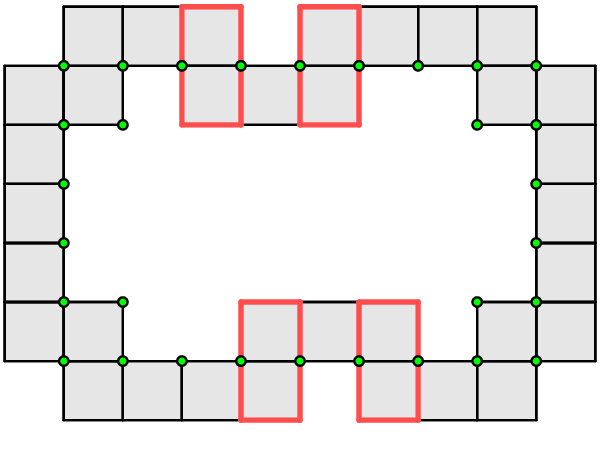}
\caption{A polyomino $\cP$, with the admissible set $Y$ in green and in red the polyocollection $\cP^{(Y)}$} 
	\label{fig:polyomino-polyocoll}
\end{figure}

Observe that not all admissible sets $X$ of $\cP$ are related to a minimal prime of $I_\cP$. In fact, consider in Figure~\ref{fig:polyomino-polyocoll2-1} the same polyomino with the highlighted vertices $a$ and $b$. Then the set $\{a,b\}$ is an admissible set of $\cP$, but $J_{\{a,b\}}$ is not a minimal prime of $I_\cP$, since $J_\emptyset \subsetneq J_{\{a,b\}}$. The related polyocollection $\cP^{(\{a,b\})}$ is pictured in Figure~\ref{fig:polyomino-polyocoll2-2}. 

      
\begin{figure}[h]
	\subfloat[]{\includegraphics[scale=0.65]{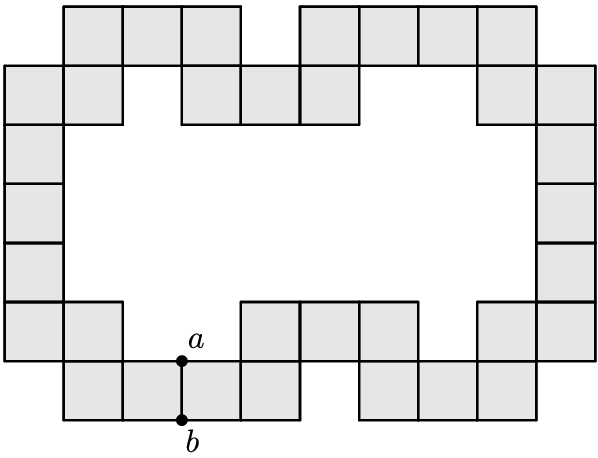}\label{fig:polyomino-polyocoll2-1}}\qquad 
	\subfloat[]{\includegraphics[scale=0.65]{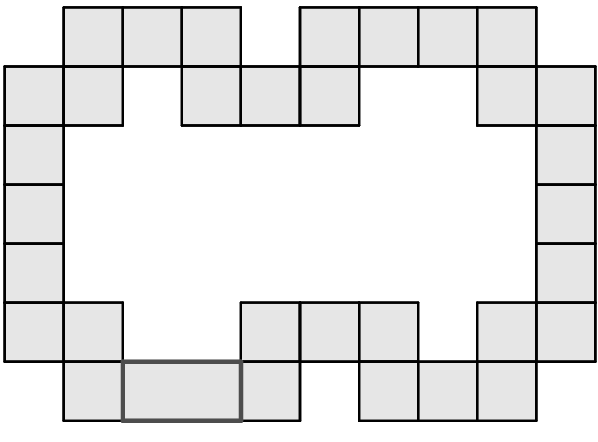}\label{fig:polyomino-polyocoll2-2}}
	\caption{A polyomino $\cP$ and the polyocollection $\cP^{(\{a,b\})}$} 
	\label{fig:polyomino-polyocoll2}
\end{figure}    
    
Some questions arise from the results obtained in this work:

\begin{itemize}

\item[(a)] If $\cP$ is a collection of cells it is well know that some properties of the ideal $I_\cP$ are related to the combinatorics of $\cP$. For instance it is well known that if $\cP$ is simple then $I_\cP$ is a normal Cohen-Macaulay domain of dimension $|V(\cP)|-|\cP|$ (see \cite{coordinate, QSS}). Also the Gorenstein property is known for some classes of polyominoes (see \cite{Andrei, hilbert-closed-path, QUR12, hilbert_parallelogram, thin}). Moreover, referring to Figure \ref{fig:polyomino-polyocoll2}, note that $I_{\cP^{(\{a,b\})}}$ is not prime. In fact, if $\cW$ is a zig-zag walk of the polyomino in Figure~\ref{fig:polyomino-polyocoll2-1}, it is not difficult to see that the binomial $f_\cW$ is a zero divisor of $I_{\cP^{(\{a,b\})}}$ (by the same arguments explained at the beginning of Section~\ref{sec:closedpath}). Furthermore, $I_{\cP^{(\{a,b\})}}$ is radical, since the set of generators forms the reduced Gr\"obner basis with respect to a suitable order obtained as done in \cite{Cisto_Navarra_groebner}. In light of the previous considerations, we wonder if some results holding for polyominoes can be extended also for polyocollections.

\item[(b)] For a collection of cells $\cP$, it is an open question whether the ideal $I_\cP$ is always radical. If this is true, then Theorem~\ref{primary-total} always provides a primary decomposition of it. Such a question can be extended naturally for the ideal $I_\cC$ related to a polyocollection. Another property that could be investigated is the unmixedness for the ideal related to a polyomino (and in general for a polyocollection), that we find for closed paths.

\item[(c)] In order to have a better description of the primary decomposition provided in Theorem~\ref{primary-total}, we should have a better understanding of the ideal $L_\cC$ for a polyocollection $\cC$. For a closed path $\cP$ we have proved that $L_\cP=I_\cP+Z_\cP$, where $Z_\cP$ is the ideal generated by the binomials $f_\cW$ related to zig-zag walks. Anyway we know that this is not true for all polyominoes, as shown in \cite[Example 3.8]{MRR20primalityOfPolyominoes}. In general, for a polyomino $\cP$, $L_\cP=I_\cP+T_\cP$ where $T_\cP$ is a binomial ideal and if $\cW$ is a zig-zag walk then $f_\cW\in T_\cP$. It would be interesting to know the (combinatorial) structure of the generators of $T_\cP$ for other classes of polyominoes (or polyocollections) $\cP$.

\item[(d)] For a closed path $\cP$, we find that the minimal prime ideal of $I_\cP$, different to $L_\cP$, is related to an admissible set containing the (unique) necklace of all zig-zag walks. Anyway, in general a polyomino with zig-zag walks has different necklaces related to different zig-zag walks. The concept of zig-zag walk can be formulated also for polyocollections. In general, if $\cC$ is a polyocollection, we ask if in the primary decomposition provided in Theorem~\ref{primary-total}, the minimal primes of $I_\cC$ different to $L_\cC$ are ideals of kind $J_X$ where $X$ is related in some way to a necklace of a zig-zag walk. Examples with \texttt{Macaulay2} suggest such a behavior for some polyomino ideals.
\end{itemize}  

\subsection*{Acknowledgments} The first and the second author wish to thank Giancarlo Rinaldo for inspiring the studying of the primary decomposition of closed path polyominoes. 
All the authors wish to thank J\"urgen Herzog, who encouraged them to study the primary decomposition for polyominoes in the ``EMS Summer School of Combinatorial Commutative Algebra'' held in Gebze (Turkey) during the SCALE conference, where the authors met. 
The first author want to thank also the group GNSAGA of INDAM, that partially supported by a grant his participation to the conference.
The last author was supported by a grant of IIT Gandhinagar and was partly supported by an Infosys Foundation fellowship. 

We thank Guo Jin for the suggestion of fixing our definition of inner interval of a polyocollection (specifically, that the interiors must be disjoint) and for pointing out a counterexample for the original Proposition 3.2. In this revised version, we have addressed these issues by removing Proposition 3.2 and explicitly adding the disjoint interiors condition.

\end{document}